\definecolor{brightmaroon}{rgb}{0.76, 0.13, 0.28}
\definecolor{maroon}{rgb}{0.5, 0, 0}
\definecolor{webgreen}{rgb}{0, 0.5, 0}
\newcommand{\E}[1]{{\mathbb E}\left[#1\right]}
\newcommand{\e}{{\mathbb E}}
\newcommand{\V}[1]{{\mathrm{Var}}\left(#1\right)}
\newcommand{\Cov}[1]{{\mathrm{Cov}}\left(#1\right)}
\newcommand{\p}[1]{{\mathbb P}\left\{#1\right\}}
\newcommand\inlawHIGH{\,{\buildrel d \over \rightarrow}\,} 
\newcommand\inlaw{{\inlawHIGH}}
\newcommand{\eql}{\,{\buildrel \cL \over =}\,}
\newcommand{\eqd}{\,{\buildrel {\mathrm{def}} \over =}\,}
\newcommand{\Gam}{\mathop{\mathrm{Gamma}}}
\newcommand{\iid}{i.i.d.\@\xspace}
\newcommand{\dsR}{{\mathbb R}}
\newcommand{\dsN}{{\mathbb N}}
\newcommand\cK{{\cal K}}
\newcommand\cL{{\cal L}}
\newcommand\cX{{\cal X}}
\DeclarePairedDelimiter{\floor}{\lfloor}{\rfloor}
\DeclarePairedDelimiter{\fracPart}{\lbrace}{\rbrace}
\newcommand\bigO[1]{{O\left( #1 \right)}}
\newcommand\Op[1]{{O_{p}\left( #1 \right)}}
\newcommand\smallo[1]{{o\left( #1 \right)}}
\newcommand\op[1]{{o_{p}\left( #1 \right)}}
\newcommand\numberthis{\addtocounter{equation}{1}\tag{\theequation}}
\newcommand{\mySeq}[2]{(#1)_{#2}}
\newcommand{\mySeqBig}[2]{(#1,#2)}
\renewcommand{\Re}[1]{\mathrm{Re}(#1)}
\newtheorem{lemma}{Lemma}
\newtheorem{theorem}{Theorem}
\newtheorem{proposition}{Proposition}
\newtheorem{corollary}{Corollary}
\theoremstyle{definition}
\newtheorem{myRemark}{Remark}
\newtheorem*{myRemark*}{Remark}
\numberwithin{theorem}{section}
\numberwithin{lemma}{section}
\numberwithin{corollary}{section}
\numberwithin{proposition}{section}
\numberwithin{myRemark}{section}
\numberwithin{equation}{section}
\newcommand{\ee}{{\mathrm e}}
\newcommand{\ii}{{\mathrm i}}
\newcommand{\dd}{{\mathrm d}}
\let\originalleft\left
\let\originalright\right
\renewcommand{\left}{\mathopen{}\mathclose\bgroup\originalleft}
\renewcommand{\right}{\aftergroup\egroup\originalright}
\newcommand{\kcut}{\cK}
\newcommand{\kcutE}{\cK^{e}}
\newcommand{\alarm}{E}
\newcommand{\alarmRV}[2]{\alarm_{#1,#2}}
\newcommand{\clock}{T}
\newcommand{\clockRV}[2]{\clock_{#1,#2}}
\newcommand{\clockrv}{\clockRV{r}{v}}
\newcommand{\clockkv}{\clockRV{k}{v}}
\newcommand{\record}{I}
\newcommand{\recordRV}[2]{\record_{#1,#2}}
\newcommand{\recordrv}{\record_{r,v}}
\newcommand{\rec}{\cX}
\newcommand{\recnR}[1]{\rec_{n}^{#1}}
\newcommand{\recnyR}[1]{\rec_{n,y}^{#1}}
\newcommand{\recnyr}{\recnyR{r}}
\newcommand{\recnall}{\rec_{n}}
\newcommand{\recnalle}{\rec_{n}^{e}}
\newcommand{\recni}{\recnR{1}}
\newcommand{\recnr}{\recnR{r}}
\newcommand{\recneR}[1]{\rec_{n}^{e,#1}}
\newcommand{\Wrab}{W_{r,k,\gamma}}
\newcommand{\Wii}{W_{1,1,\gamma}}
\newcommand{\nurab}{\nu_{r,k,\gamma}}
\newcommand{\rescale}[1]{\frac{\lg(n)^{\frac{#1}{k}+1}}{C_{2}(#1) n}}
\newcommand{\rescaleInv}[1]{\frac{C_{2}(#1) n}{\lg(n)^{\frac{#1}{k}+1}}}
\newcommand{\mbar}{\overline{m}}
\newcommand{\mpr}{\hat{m}}
\newcommand{\psibar}{\bar{\psi}}
\newcommand{\Tbin}{T^{\mathrm{bin}}}
\newcommand{\Tbinn}{\Tbin_{n}}
\newcommand{\xirv}{\xi_{r,v}}
\begin{document}

\pagestyle{plain}
\title{\rmfamily\normalfont\spacedallcaps{Cutting resilient networks -- complete binary trees}}
\author{
    Xing Shi Cai, Cecilia Holmgren\thanks{This work was
        partially supported by two grants from the Knut and Alice Wallenberg Foundation, a grant
        from the Swedish Research Council, and the Swedish Foundations' starting grant from the
        Ragnar S\"{o}derberg Foundation. Emails:
    \texttt{\{xingshi.cai, cecilia.holmgren\}@math.uu.se}.}\\
    \footnotesize{Uppsala University, Uppsala, Sweden}
}

\maketitle

\begin{abstract}
    In our previous work \cite{Cai010}, we introduced the random \(k\)-cut number for rooted graphs.
    In this paper, we show that the distribution of the \(k\)-cut number in complete binary trees
    of size \(n\), after rescaling, is asymptotically a periodic function of \(\lg n - \lg \lg n\).
    Thus there are different limit distributions for different subsequences, where these limits are
    similar to weakly \(1\)-stable distributions. This generalizes the result
    for the case \(k=1\), i.e., the traditional cutting model, by \citet{janson04}.
\end{abstract}

\section{Introduction}

\subsection{The model and the motivation}

In our previous work \cite{Cai010}, we introduced the \(k\)-cut number for rooted graphs.  Let \(k\)
be an integer.  Let \(G_{n}\) be a connected graph of \(n\) nodes with exactly one node labelled as
the root.  We remove nodes from the graph using this random procedure: (note that in our model nodes
are only removed after having been cut \(k\) times)
\begin{enumerate}
    \item Initially set every node's cut-counter to zero, i.e., no node has ever been cut.
    \item Choose one node uniformly at random from the component containing the root and increase
        its cut-counter by one, i.e., we cut the selected node once.
    \item If this node's cut-counter hits \(k\), i.e., it has been cut \(k\) times, then remove it from the graph.
    \item If the root has been removed, then stop. Otherwise, go to step 2.
\end{enumerate}
We call the (random) total number of cuts needed for this procedure to end the \(k\)-cut number and
denote it by \(\kcut(G_{n})\).  The traditional cutting model corresponds to the case that \(k=1\).

We can also cut and remove edges instead of nodes using the same process with the modification that
we stop when the root has been isolated. We denote the total number of cuts needed for this edge
removing process to end by \(\kcutE(G_{n})\).

The \(k\)-cut number can be seen as a measure of the difficulty for the destruction of a resilient
network. For example, in a botnet, a bot-master controls a large number of compromised computer
(bots) for various cybercrimes. To counter attack a botnet means to reduce the number of bots
reachable from the bot-master by fixing compromised computers \cite{4413000}.  We can view a botnet
as a graph and fixing a computer as removing a node from the graph.  If we assume that each
compromised computer takes \(k\)-attempts to clean, and each attempt aims at a computer chosen
uniformly at random, then the \(k\)-cut number is precisely the number of attempts of cleaning up
needed to completely destroy a botnet.

The case \(k=1\), i.e., the traditional cutting model has been well-studied. It was first introduced
by \citet{meir70} for uniform random Cayley trees. \citet{janson04, janson06} studied one-cuts in
binary trees and conditioned Galton-Watson trees. \citet{ab14} simplified the proof for the limit
distribution of one-cuts in conditioned Galton-Watson trees. The cutting model has also been studied
in random recursive trees, see \citet{meir74}, \citet{iksanov07}, and \citet{drmota09}. For binary
search trees and split trees, see \citet{holmgren10, holmgren11}.

In our previous work \cite{Cai010}, we mainly analyzed \(\kcut(P_{n})\), the \(k\)-cut number for a path of length
\(n\), which generalizes the record number in a uniform random permutation. In this paper, we
continue our investigation in \emph{complete binary trees}, i.e., binary trees in which each level
is full except possibly for the last level, and the nodes at the last level occupy the leftmost
positions. If the last level is also full, then we call the tree a \emph{full binary tree}.

\subsection{An equivalent model}

Let \(\Tbinn\) be a complete binary tree of size \(n\). Let \(\recnall \eqd \kcut(\Tbinn)\) and
\(\recnalle \eqd \kcutE(\Tbinn)\) with the root of the tree as the root of the graph. There is an
equivalent way to define \(\recnall\).  Let \(\mySeqBig{\alarmRV{r}{v}}{r
\ge 1, v \in \Tbinn}\) be \iid{} exponential random variables with mean \(1\).  Let
\(\clockRV{r}{v}\eqd\sum_{j=1}^{r} \alarmRV{j}{v}\).  Imagine each node in \(\Tbinn\) has an alarm
clock and node \(v\)'s clock fires at times \(\mySeqBig{\clockRV{r}{v}}{r \ge 1}\). If we cut a node
when its alarm clock fires, then due to the memoryless property of exponential random variables, we
are actually choosing a node uniformly at random to cut. 

However, this also means that we are cutting nodes that have already been removed from the tree. Thus for
a cut on node \(v\) at time \(\clockRV{r}{v}\) (for some \(r \le k\)) to be counted in \(\recnall\),
none of its ancestors can have already been cut \(k\) times, i.e.,
\begin{equation}\label{eq:r:rec}
    \clockRV{r}{v} < \min_{u: u \prec v} \clockRV{k}{u},
\end{equation}
where \(u \prec v\) denotes that \(u\) is an ancestor of \(v\).  When the event in \eqref{eq:r:rec} happens, we say that
\(T_{r,v}\) (or simply \(v\)) is an \emph{\(r\)-record} and let \(\recordrv\) be the indicator random variable for 
this event. Let \(\recnR{r}\) be the total number of \(r\)-records, i.e.,
\(\recnR{r}\eqd\sum_{v} \recordrv\). Then obviously \(\recnall \eql \sum_{r=1}^{k}
\recnR{r}\). We use this equivalence for the rest of the paper.

By assigning alarm clocks to edges instead of nodes, we can define the edge version of
\(r\)-records \(\recneR{r}\) and have \(\recnalle\eql \sum_{r=1}^{k} \recneR{r}\).

\subsection{The main results}

To introduce the main results, we need some notations.
Let \(\fracPart{x}\) denote the fractional part of \(x\), i.e., \(\fracPart{x}\eqd x - \floor{x}\).
Let \(\Gamma(a)\) be the Gamma function \cite[5.2.1]{DLMF}.  Let \(\Gamma(a,x)\) be the upper
incomplete Gamma function \cite[8.2.2]{DLMF}. Let \(Q(a,x)\eqd \Gamma(a,x)/\Gamma(a)\).  Let
\(Q^{-1}(a,x)\) be the inverse of \(Q(a,x)\) with respect to \(x\). Let \(\lg(x)\eqd \log_{2}(x)\).

\begin{theorem}\label{thm:1:rec}
    Assume that \(\fracPart{\lg n - \lg \lg n} \to \gamma \in [0,1]\) as \(n \to \infty\).
    Then
    \begin{equation}\label{eq:rec:1:dist}
        \rescale{r}
        \recnr 
        -\mu_{r,n}
        \inlaw 1- C_{3}(r) \Wrab
        ,
    \end{equation}
    where 
    \begin{equation}\label{eq:mu:new}
        \mu_{r,n}
        =
        \frac{k}{r} \lg (n)
        +
        \sum _{i=1}^k 
        C_{1}(r,i)
        \lg (n)^{1-\frac{i}{k}} 
        +
        \lg (\lg (n))
        ,
    \end{equation}
    \(C_{1}(\cdot, \cdot)\), 
    \(C_{2}(\cdot, \cdot)\), 
    and
    \(C_{3}(\cdot)\)
    are constants defined in \autoref{lem:rescale}, and \(\Wrab\)
    has an infinitely divisible distribution with the characteristic function
    \begin{equation}\label{eq:Wrab}
        \E{\exp\left({\ii t \Wrab}\right)}
        =
        \exp\left( 
            \ii f_{r,k,\gamma} t + 
            \int_{0}^{\infty}
            \left( 
                \ee^{\ii t x}
                -1
                -\ii
                t x
                \cdot
                \mathbb 1 [x<1]
            \right)
            \,
            \mathrm{d}\nurab(x)
        \right)
        ,
    \end{equation}
    where \(f_{r,k,\gamma}\) is a constant defined later in \eqref{eq:f:gamma} and the L\'evy measure
    \(\nurab\) has support on \((0,\infty)\) with
    density
    \begin{equation}\label{eq:nurab}
        \begin{aligned}
            \frac{\dd\nurab}{\dd x}
            =
            \frac{\Gamma \left(\frac{r}{k}\right)^2 }{x^2}
            \sum_{s \ge 1}
            &
            4^{\fracPart{\gamma +{\lg \left({x}/{\Gamma \left(\frac{r}{k}\right)}\right)}}- s}
            \exp \left(Q^{-1}\left(\frac{r}{k},2^{\fracPart{\gamma +{\lg \left({x}/{\Gamma \left(\frac{r}{k}\right)}\right)}}-s}\right)\right)
            \\
            &
            \qquad
            Q^{-1}\left(\frac{r}{k},2^{\fracPart{\gamma +{\lg \left({x}/{\Gamma
                                    \left(\frac{r}{k}\right)}\right)}}-s}\right)^{1-\frac{r}{k}} 
            .
        \end{aligned}
    \end{equation}
\end{theorem}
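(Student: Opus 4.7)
The plan is to use the Poissonized representation of $\recnr$ as $\sum_{v}\recordrv$ from Section~1.2 together with a depth-based decomposition of $\Tbinn$, following and extending the approach of~\cite{janson04} for the case $k=1$. The core technical task is to compute the characteristic function of this sum, isolate the logarithmic periodicity forced by the dyadic structure of $\Tbinn$, and match the result with the infinitely divisible law described by \eqref{eq:Wrab}--\eqref{eq:nurab}.

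First I would choose a cutoff depth $d^{\star}(n) = \lg \lg n + O(1)$ so that the subtrees $S_{1},\dots,S_{m}$ of $\Tbinn$ rooted at depth $d^{\star}$ are, up to a negligible correction coming from the last incomplete level, asymptotically i.i.d.\ complete binary trees of comparable size $\Theta(n/\lg n)$. Condition on the alarm times $\clockRV{k}{u}$ for all $u$ at depth $\le d^{\star}$; then the contributions $N_{j}$ from each $S_{j}$ to $\recnr$ are conditionally independent, and their conditional distribution depends only on the single statistic $M_{j} \eqd \min_{u \prec \mathrm{root}(S_{j})} \clockRV{k}{u}$. Using $\p{\clockRV{r}{v} < x} = 1 - Q(r,x)$ together with the fact that the minimum of $d$ independent $\Gam(k,1)$ variables has scale $d^{-1/k}$, I would show that inside each $S_{j}$ the dominant contribution to $N_{j}$ comes from nodes at depth close to $\lg n$ (measured from the overall root) and that $N_{j}$, after rescaling, concentrates around a deterministic function of $M_{j}$ of size $|S_{j}|/\lg(n)^{r/k}$.

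Next I would assemble the characteristic function of the centered and rescaled sum $\rescale{r}\sum_{j}N_{j} - \mu_{r,n}$. The joint law of the top-level minima $(M_{j})$ is controlled by the distribution of path minima of $\Gam(k,1)$ variables, whose small-argument density is explicit and, after the logarithmic change of variables $x \mapsto \lg(x/\Gamma(r/k))$ combined with the additional rescaling from $\rescale{r}$, produces exactly the L\'evy density in \eqref{eq:nurab}. The sum $\sum_{s \ge 1}$ arises because anomalously small minima $M_{j}$ occur at geometrically separated scales---each factor of $2^{-1}$ corresponding to an extra ancestor clock below the typical scale---while the fractional-part expression $\fracPart{\gamma + \lg(x/\Gamma(r/k))}$ records the residual integer depth parameter after taking logarithms. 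The centering $\mu_{r,n}$ in \eqref{eq:mu:new} is extracted from the deterministic mean expansion: the leading $(k/r)\lg n$ from summing $\E{\recordrv}$ across depths, the polylog corrections $C_{1}(r,i)\lg(n)^{1-i/k}$ and the $\lg\lg n$ term from successive Taylor expansions of $Q$ and its inverse near zero, and the additive $1$ on the right of \eqref{eq:rec:1:dist} from an $O(1)$ boundary contribution that survives the scaling.

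The main obstacle will be extracting the genuinely periodic limit. Unlike the case $k=1$ in \cite{janson04}, where ancestor minima reduce to exponentials and the computations simplify dramatically, for general $k$ one has to carry $Q^{-1}(r/k,\cdot)$ through the entire analysis, and the asymmetric interplay between $r$ (which governs the record probability) and $k$ (which governs the ancestor clocks) is non-trivial. One must prove that the genuinely log-periodic oscillations of the scaling $\lg(n)^{r/k+1}/n$ match, term by term, those of the density \eqref{eq:nurab}, so that the limit depends only on $\gamma$ modulo $1$. Technically this amounts to showing that atypically small $M_{j}$'s contribute the discrete family of ``big jumps'' indexed by $s\ge 1$, establishing tightness and the correct truncation matching the L\'evy--Khintchine compensator $\ii t x \cdot \mathbb{1}[x<1]$ of \eqref{eq:Wrab}, and finally identifying $C_{3}(r)$ and $f_{r,k,\gamma}$ as the natural scale and drift emerging from the mean expansion.
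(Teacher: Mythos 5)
Your overall strategy mirrors the paper's: Poissonize via records, decompose $\Tbinn$ at a fixed depth so that subtree contributions are conditionally independent given the top-level clocks, approximate the total by a sum of independent random variables, and invoke a classical convergence criterion for infinitely divisible laws (the paper uses Kallenberg's Theorem 15.28). However, there are two concrete gaps, one quantitative and one structural, that would derail the argument as you have sketched it.

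First, the cutoff depth $d^{\star}(n)=\lg\lg n+O(1)$ is too small. The paper decomposes at height $L=\floor*{\left(2-\frac{1}{2k}\right)\lg\lg n}$, and the exponent matters. Conditioning on the clocks above the cut, each subtree contributes with conditional variance of order $(n/2^{L})^{2}\,m^{-(2r+1)/k}$ (\autoref{lem:var}), so the total conditional variance is $O\left(n^{2}\,2^{-L}\,m^{-(2r+1)/k}\right)$. The fluctuations you are trying to capture are of order $n\,m^{-r/k-1}$, so one needs $2^{L}\gg m^{2-1/k}$, i.e.\ $L$ must exceed $(2-1/k)\lg\lg n$ with room to spare. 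Your choice gives only $\Theta(\lg n)$ subtrees; for $k\ge 2$ the conditional variance then swamps the target fluctuations, and even for $k=1$ it is borderline. The paper's $L$ is tuned exactly to make \autoref{lem:sum:psi} go through.

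Second, you stop at "conditional distribution depends only on the single statistic $M_{j}$" but do not address the fact that the $M_{j}$ (your notation for the path minima $Y_{i}$) are \emph{not} independent across subtrees: distinct paths share ancestors. To apply a convergence theorem for triangular arrays of independent summands, the paper has to pass from a sum over the dependent $Y_{i}$ to a sum over the genuinely independent clock values $\clockkv$ for all $v$ with $h(v)\le L$. This is the content of \autoref{lem:triangular}, which relies on the observation that with high probability at most one clock per root-to-$v_{i}$ path is anomalously small, so the minimum can be replaced by a sum over the individual clocks up to a negligible error. Your sketch would need this step, and it is not automatic. Relatedly, your heuristic that the sum $\sum_{s\ge 1}$ in \eqref{eq:nurab} comes from "extra ancestor clocks below the typical scale" is not the right mechanism: in the paper's derivation (see \eqref{eq:sum:probk}--\eqref{eq:F:x}) the sum over $s$ arises from summing $\p{\xirv>x}$ over the dyadic family of subtree sizes $n_{v}=2^{m-t}-1$, i.e.\ over the depths of the subtree roots, not over multiplicities of small clocks on a single path.
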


\begin{theorem}\label{thm:mean}
    Assume the same conditions as in \autoref{thm:1:rec}.
    Then
    \begin{equation}\label{eq:rec:r:dist}
        \rescale{1}
        \left(
            \recnall
            -
            \sum_{r=1}^{k}
            \rescaleInv{r}
            \mu_{r,n}
        \right)
        \inlaw 1- C_{3}(1) W_{1,k,\gamma }
        .
    \end{equation}
    The same holds true for \(\recnalle\).
\end{theorem}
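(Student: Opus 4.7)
The plan is to reduce \autoref{thm:mean} to \autoref{thm:1:rec} by exploiting the fact that among the $k$ summands in $\recnall \eql \sum_{r=1}^{k}\recnr$, only the $r=1$ term contributes nontrivially to the limit after centering: its fluctuation scale $n/\lg(n)^{1/k+1}$ strictly dominates the scales $n/\lg(n)^{r/k+1}$ corresponding to $r\ge 2$.

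First I would decompose
\[
\rescale{1}\left(\recnall - \sum_{r=1}^{k}\rescaleInv{r}\mu_{r,n}\right)
= \sum_{r=1}^{k}\rescale{1}\bigl(\recnr - \rescaleInv{r}\mu_{r,n}\bigr).
\]
The summand $r=1$ is exactly $\rescale{1}\recni - \mu_{1,n}$, which by \autoref{thm:1:rec} applied with $r=1$ converges in distribution to $1 - C_{3}(1)\,W_{1,k,\gamma}$. For each $r\in\{2,\ldots,k\}$, \autoref{thm:1:rec} gives $\rescale{r}\recnr - \mu_{r,n} = \Op{1}$, so dividing by $\rescale{r}$,
\[
\recnr - \rescaleInv{r}\mu_{r,n} = \Op{\rescaleInv{r}}.
\]
Since $\rescale{1}\cdot\rescaleInv{r} = \frac{C_{2}(r)}{C_{2}(1)}\lg(n)^{(1-r)/k}\to 0$ for $r\ge 2$, it follows that
\[
\rescale{1}\bigl(\recnr - \rescaleInv{r}\mu_{r,n}\bigr)
= \Op{\lg(n)^{(1-r)/k}} = \op{1}.
\]
Slutsky's theorem then combines the distributional limit of the $r=1$ summand with the $\op{1}$ contributions from $r\ge 2$, yielding the claimed convergence for $\recnall$.

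For the edge version, the same argument works verbatim once one has the edge analog of \autoref{thm:1:rec}, namely $\rescale{r}\recneR{r} - \mu_{r,n}\inlaw 1 - C_{3}(r)\Wrab$. I would expect this analog to follow by running the proof of \autoref{thm:1:rec} with alarm clocks assigned to edges rather than nodes; the only change is that the role of nodes in the recursion along root-to-leaf paths is played by edges, which shifts indices by one but does not affect the leading asymptotics or the limiting L\'evy measure $\nurab$.

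No new difficulty arises in \autoref{thm:mean} itself: all the heavy lifting is done in \autoref{thm:1:rec}, which establishes the infinitely divisible limit together with the periodic dependence on $\gamma$. \autoref{thm:mean} is obtained essentially for free, once one observes that the fluctuations of $\recnr$ for $r\ge 2$ live on strictly smaller scales than those of $\recni$ and therefore disappear in the rescaling used for the total number of cuts.
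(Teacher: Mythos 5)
Your argument for the node case is exactly the paper's: decompose $\recnall=\sum_{r=1}^k\recnr$, note that $\rescale{1}\rescaleInv{r}=\tfrac{C_2(r)}{C_2(1)}\lg(n)^{(1-r)/k}\to 0$ for $r\ge 2$ so those summands are $\op{1}$ by the tightness implied in \autoref{thm:1:rec}, and apply Slutsky to the $r=1$ term. For the edge version the paper avoids reproving an edge analog of \autoref{thm:1:rec}: it observes that $\recnalle$ is distributed as $\recnall-k$ conditioned on $\clockRV{k}{o}=\infty$, so one only needs to replace $L+1$ by $L$ in the definition of $Y_i$ and drop $\xi_{r,o}$ from the triangular array in \eqref{eq:rescale}, which is a cleaner reduction than re-running the whole proof with clocks on edges, though the end result of your sketch is the same.
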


\begin{myRemark}
    Let \(\widetilde{X}_{n}\) denote the left-hand-side of \eqref{eq:rec:r:dist}.  Another way of
    formulating \autoref{thm:mean} is by saying that the distance, e.g., in the L\'evy metric,
    between the distribution of \(\widetilde{X}_{n}\) and the distribution of
    \(1-C_{3}(1)W_{1,k,\fracPart{\lg n - \lg \lg n}}\) tends to zero as \(n \to \infty\).
\end{myRemark}

\begin{myRemark}
We do not have a closed form for \(C_{1}(\cdot, \cdot)\). But for specific \(k\) they are easy to compute 
with computer algebra systems. 
When \(k=r=1\), i.e., when \(\recni=\recnall\), \eqref{eq:rec:r:dist} reduces to
\begin{equation}\label{eq:rec:1:dist:k1}
    \recnall \frac{\lg (n)^2}{n}-\lg (n)-\lg (\lg (n)) \inlaw -W_{1,1,\gamma }
    ,
\end{equation}
and since \(Q^{-1}(1,x)=\log(1/x)\), \eqref{eq:nurab} reduces to
\begin{equation}\label{eq:nurab:k:1}
    \frac{\dd \nu_{1,1,\gamma}}{\dd x}
    =
    \frac{
        1
    }
    {x^2}
    2^{\fracPart{\lg x + \gamma}}.
\end{equation}
In other words, we recover the result for the traditional cutting model in complete binary trees by
\citet[Theorem 1.1]{janson04}.
When \(k=2\), \eqref{eq:rec:r:dist} reduces to
\begin{equation}\label{eq:rec:1:dist:k2}
    \sqrt{
        \frac{8}{\pi}
    }
    \frac{\lg(n)^{\frac{3}{2}}}{n}
    \recnall
    - 2 \lg(n)
    -\frac{1}{3} \sqrt{\frac{2}{\pi}} \lg(n)^{\frac{1}{2}}
    - \lg(\lg(n))
    - \frac{11}{3}
    \inlaw
    -\frac{2 W_{1,2,\gamma}}{\sqrt{\pi}}
    .
\end{equation}
\end{myRemark}

\begin{myRemark}
    In Remark 1.5 of \cite{janson04}, \citeauthor{janson04} mentioned that when \(k=r=1\), if \(\Wii'\)
    and \(\Wii''\) are independent copies of \(\Wii\), then \(\Wii'+\Wii'' \eql 2 \Wii +2\), but
    the corresponding statement for three copies of \(\Wii\) is false. In other words, \(\Wii\) is
    roughly similar to a \(1\)-stable distribution. This extends to general \(k\) in the sense that
    \begin{equation}\label{eq:two:copies}
        \Wrab'+\Wrab'' \eql 2 \Wrab +2 \int_{1}^{2} x \,\mathrm{d}\nurab(x),
    \end{equation}
    with \(\int_{1}^{2} x \,\mathrm{d}\nu_{1,1,\gamma}(x) = 1\).
    This follows by computing the characteristic functions of both sides using \eqref{eq:Wrab} and
    by noticing that
    \begin{equation}\label{eq:periodic}
        \left. \frac{\text{d$\nurab $}}{\text{d}x} \right|_{x=u}
        =
        \frac{1}{4} \left.\frac{\text{d$\nurab $}}{\text{d}x}\right|_{x=\frac{u}{2}}
        .
    \end{equation}
\end{myRemark}

In the rest of the paper, we will first compute the expected number and variance of \(r\)-records
conditioning on \(\clockRV{k}{o}=y\), where \(o\) denotes the root. Then we show that the fluctuation of the total number of
\(r\)-records from its mean is more or less the same as the sum of such fluctuations in each subtree
rooted at height \(L\eqd\floor*{\left( 2-\frac{1}{2 k} \right) \lg \lg n}\), conditioning on what happens below height \(L\). This sum can be further
approximated by a sum of independent random variables. Finally, we apply a classic theorem regarding
the convergence to infinitely divisible distributions by \citet[Theorem 15.23]{kallenberg02} to prove
\autoref{thm:1:rec} and \autoref{thm:mean}.

The proof follows a similar path as \citet{janson04} did for the case \(k=1\). However, the analysis
for \(k \ge 2\) is significantly more complicated.

\citet{holmgren10, holmgren11} showed that when \(k=1\), \(\recnall\) has similar behaviour in
binary search trees and split trees as in complete binary trees. We are currently trying to prove
this for \(k \ge 2\).

\section{Some more notations}

We collect some of the notations which are used frequently in this paper.

Let \(\Gamma(a)\) be the Gamma function \cite[5.2.1]{DLMF}, i.e.,
\begin{equation}\label{eq:gam:def}
    \Gamma\left(a\right)=\int_{0}^{\infty}
    \ee^{-t}
    t^{a-1}
    \mathrm{d}t,
    \qquad
    \Re{a}> 0.
\end{equation}
Note that \(\Gamma(k)=k!\) for \(k \in \dsN\).
Let \(\Gamma(a,x)\) and \(\gamma(a,x)\) be the upper and lower incomplete Gamma functions
respectively \cite[8.2]{DLMF}, i.e.,
\begin{equation}\label{eq:gam:up}
    \Gamma\left(a,z\right)=\int_{z}^{\infty}\ee^{-t}t^{a-1}\mathrm{d}t,
    \qquad
    \gamma\left(a,z\right)=\int_{0}^{z}\ee^{-t}t^{a-1}\mathrm{d}t,
    \qquad
    \Re{a}>0.
\end{equation}
Thus \(\gamma(a,x)\eqd\Gamma(a)-\Gamma(a,x)\).  Let \(\Gamma(a,x_{0},x_{1}) \eqd
    \Gamma(a,x_{0})-\Gamma(a,x_{1})\).  We also define
\(\gamma(a,\infty)\eqd\lim_{x\to\infty}\gamma(a,x)=\Gamma(a)\).

Let \(Q(a,x)\eqd \Gamma(a,x)/\Gamma(a)\).
Let \(Q^{-1}(a,x)\) be the inverse of \(Q(a,x)\).
Note that \(Q(1,x)=\ee^{-x}\) and \(Q^{-1}\left( 1,x \right)=\log\left( {1}/{x} \right)\).

Let \(m\) be the height of a complete binary tree of \(n\) nodes, i.e., \(m \eqd \floor{\lg n}\). Let
\(\ell \eqd \floor{\lg \lg n}\).  Let \(L \eqd \floor*{\left( 2-\frac{1}{2 k} \right) \lg \lg n}\).

For node \(v \in \Tbinn\), let \(h(v)\) be the height of \(v\), i.e., the distance (number of edges)
between \(v\) and the root, which we denote by \(o\). 

Let \(\recnyr\) be \(\recnR{r}-1\) conditioned on \(\clockRV{k}{o}=y\), i.e., the number of
\(r\)-record, excluding the root, conditioned on that the root is removed (cut the \(k\)-th time) at time \(y\).

For functions \(f:A\to\dsR\) and \(g:A\to\dsR\), we write \(f=\bigO{g}\) \emph{uniformly}
on \(B \subseteq A\) to indicate that there exists a constant \(C_{0}\) such that \(|f(a)|\le C_{0}
    |g(a)|\) for all \(a\in B\). The word \emph{uniformly} stresses that \(C_{0}\) does not depend on
\(a\).

We use the notation \(\Op{\cdot}\) and \(\op{\cdot}\) in the usual sense, see \cite{janson11}.

The notations \(C_{1}(\cdots), C_{2}(\cdots),\dots\) denote constants that depend on \(k\) and other
parameters but do not depend on \(n\).

\section{The expectation and the variance}

\begin{lemma}\label{lem:fmexpand}
    There exist constants \(\mySeq{C_{5}(j,b)}{j \ge 1, b \ge k+1}\) such that
    \begin{equation}\label{eq:fmexpand}
        \exp\left({\frac{m x^k}{k!}} \right)Q(k,x)^m
        =
        1
        +
        \sum _{j=1}^k \sum _{b=j k+j}^{j k+k} C_{5}(j,b) m^j x^b 
        +
        \bigO{m^{k+1} x^{(k+1)^2}+m x^{2 k+1}},
    \end{equation}
    uniformly for all \(x \in \left(0,m^{-k_{0}}\right)\), where 
    \(k_{0}\eqd\frac{1}{2} \left(\frac{1}{k}+\frac{1}{k+1}\right)\).
\end{lemma}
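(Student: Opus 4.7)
The plan is to rewrite the left-hand side of \eqref{eq:fmexpand} as $\exp(m F(x))$ for an analytic function $F$ that vanishes to order $k+1$ at the origin, and then truncate the Taylor expansion of the exponential term by term.

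Set $F(x) := \log Q(k,x) + x^k/k!$. Since $Q(k,\cdot)$ is entire with $Q(k,0)=1$, $\log Q(k,x)$ is analytic in a neighborhood of the origin; from $Q(k,x) = 1 - x^k/k! + \bigO{x^{k+1}}$ the coefficient of $x^k$ in $\log Q(k,x)$ is exactly $-1/k!$, so $F(x) = \bigO{x^{k+1}}$ and $F$ itself is analytic near $0$. Consequently, for each integer $j\ge 1$, the Taylor series $F(x)^j = \sum_{b \ge j(k+1)} g_{j,b} x^b$ starts at degree $j(k+1) = jk+j$.

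From $\exp(m x^k/k!)\,Q(k,x)^m = \exp(m F(x)) = 1 + \sum_{j\ge 1} m^j F(x)^j/j!$, I keep, for each $j \in \{1,\ldots,k\}$, the Taylor coefficients of $F(x)^j$ of degree at most $(j+1)k = jk+k$; defining $C_{5}(j,b) := g_{j,b}/j!$ produces the double sum appearing in \eqref{eq:fmexpand}. What remains is the error
\begin{equation*}
    \cE(m,x) \;=\; \sum_{j=1}^{k}\frac{m^j R_j(x)}{j!} \;+\; \sum_{j\ge k+1}\frac{m^j F(x)^j}{j!},
\end{equation*}
where $R_j(x) := F(x)^j - \sum_{b=j(k+1)}^{(j+1)k} g_{j,b} x^b = \bigO{x^{(j+1)k+1}}$ and $|F(x)|^j \le (C x^{k+1})^j$ uniformly on a small fixed neighborhood of $0$.

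The constraint $x\in(0, m^{-k_0})$ with $k_0(k+1) = 1 + \tfrac{1}{2k}$ forces $m x^{k+1} \le m^{-1/(2k)} \to 0$, so the $j\ge k+1$ part is controlled geometrically by its first summand: $\sum_{j\ge k+1}(C m x^{k+1})^j/j! = \bigO{(m x^{k+1})^{k+1}} = \bigO{m^{k+1} x^{(k+1)^2}}$. For the $j\le k$ tails I plan to use the two algebraic identities
\begin{equation*}
    m^j x^{(j+1)k+1} \;=\; m^{k+1} x^{(k+1)^2}\cdot (m x^k)^{j-k-1} \;=\; m x^{2k+1}\cdot (m x^k)^{j-1}.
\end{equation*}
When $m x^k \ge 1$ the first identity gives $m^j x^{(j+1)k+1} \le m^{k+1} x^{(k+1)^2}$ (since $j-k-1\le -1$), while when $m x^k \le 1$ the second gives $m^j x^{(j+1)k+1} \le m x^{2k+1}$ (since $j-1 \ge 0$). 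Either way $m^j R_j(x)/j! = \bigO{m^{k+1} x^{(k+1)^2} + m x^{2k+1}}$, and summing over $j$ finishes the estimate.

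The main obstacle is precisely this case split: the truncation point $(j+1)k$ and the particular form of the error term are rigged so that the two identities above let a single two-term bound subsume every leftover monomial $m^j x^b$ with $j \ge 1$ and $b > (j+1)k$. Once this bookkeeping is in place, the remainder of the argument is routine analytic expansion of $Q(k,x)$ and its powers.
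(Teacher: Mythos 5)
Your argument is correct, and it routes the computation through the logarithm rather than binomially as the paper does. The paper writes $\left(\ee^{x^k/k!}Q(k,x)\right)^m = (1+u(x))^m$ with $u(x)=\bigO{x^{k+1}}$ (using the series for $Q(k,x)$) and invokes a binomial expansion, suppressing the details of how the error is controlled; you instead set $F(x) := \log Q(k,x) + x^k/k!$, so the left-hand side becomes $\exp(mF(x))$, each $F(x)^j$ starts cleanly at degree $j(k+1)$, and the $m$-dependence of each contribution is exactly $m^j/j!$ rather than $\binom{m}{j}$, which would otherwise need its own polynomial expansion in $m$. The two routes produce the same constants $C_{5}(j,b)$. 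The more substantial addition in your write-up is the explicit error analysis: the algebraic factorizations $m^j x^{(j+1)k+1} = m^{k+1}x^{(k+1)^2}\cdot(mx^k)^{j-k-1} = mx^{2k+1}\cdot(mx^k)^{j-1}$, combined with the split according to whether $mx^k\ge 1$ or $mx^k\le 1$, show exactly why the specific two-term bound $m^{k+1}x^{(k+1)^2}+mx^{2k+1}$ is forced, something the paper's one-line ``taking the binomial expansion \dots\ and ignoring small order terms'' leaves entirely to the reader. Both proofs ultimately hinge on the same numerical consequence of the choice of $k_{0}$, namely that $mx^{k+1}\le m^{-1/(2k)}\to 0$ on $(0,m^{-k_0})$, which is what lets the tail $j\ge k+1$ of the exponential (respectively binomial) series be absorbed into $\bigO{m^{k+1}x^{(k+1)^2}}$.
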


\begin{myRemark}
    We do not have a closed form for the constants \(C_{5}(j,b)\), but they are the coefficients of
    \(m^{j}x^{b}\) in \eqref{eq:fmexpand}. For fixed \(k\), they are easy
to find with computer algebra systems. For example, 
when \(k=1\), \eqref{eq:fmexpand} reduces to
\begin{equation}\label{eq:fmexpand:k1}
    \ee^{m x} Q(1,x)^{m}
    =
    1+
    \bigO{m^{2} x^{4}+m x^{3}},
\end{equation}
which is trivially true since \(Q(1,x)=\ee^{-x}\).
When \(k=2\), \eqref{eq:fmexpand} reduces to
\begin{equation}\label{eq:fmexpand:k2}
    \exp\left({\frac{m x^2}{2}}  \right) Q(2,x)^m=
    1
    +
    \frac{1}{3} m x^3 
    -
    \frac{1}{4}
    m x^4 
    +
    \frac{1}{18} m^2 x^6 
    +
    \bigO{ m^3 x^9+ m x^5}
    .
\end{equation}
\end{myRemark}

\begin{proof}
    Using the series expansion of \(Q(k,x)\) given by \cite[8.7.3]{DLMF}, it is easy to verify that
    \begin{equation}\label{eq:Q:e}
        \left( 
        \exp
        \left( 
            \frac{x^k}{k!}
        \right)
        Q\left( k,x \right)
        \right)^{m}
        =
        \left( 
        1
        -\sum _{j=1}^k 
        \frac{x^k (-x)^j }{(k-1)! j! (k+j)}
        -\frac{x^{2 k}}{2 (k!)^2}
        +\bigO{x^{2 k+1}}
        \right)^{m}
        ,
    \end{equation}
    uniformly for \(x \in (0, m^{-k_{0}})\).
    Taking the binomial expansion of the right-hand-side and ignoring
    small order terms gives \eqref{eq:fmexpand:k2}.
\end{proof}

\begin{lemma}\label{lem:expectation:full}
    In the case that the tree is full, i.e., \(n=2^{m+1}-1\), then
    \begin{equation}\label{eq:y:full}
            \e \recnyR{r}
            =
            2^{m+1}
            \left(
            \psi_{r}(m,y,2)
            +
            \bigO{
                m^{-\frac{1+r}{k}-1}
            }
            \right)
            ,
    \end{equation}
    where 
    \begin{equation}
        \begin{aligned}
            \psi_{r}(m,z,c)
            \eqd
            &
            \frac{m^{-\frac{r}{k}} }{r!}
            \left(
                \frac{\left({k!}\right)^{\frac{r}{k}}}{k} \gamma \left(\frac{r}{k},\frac{m z^k}{k!}\right)
                + 
                c 
                \frac{\left({k!}\right)^{\frac{r}{k}}}{k} m^{-1} \gamma
                \left(\frac{r+k}{k},\frac{m z^k}{k!}\right)
            \right.
            \\
            &
                +
                \sum _{j=1}^k \left(\sum _{b=j k+j}^{j k+k} 
                    \frac{(k!)^{\frac{b+r}{k}}}{k}
                    C_{6}(j,b) m^{j-\frac{b}{k}} \gamma
                    \left(\frac{b+r}{k},\frac{m z^k}{k!}\right)\right)
            \\
            &
            \left.
                + 
                \sum _{i=1}^k \frac{(-1)^i \left({k!}\right)^{\frac{i+r}{k}}}{k i!}
                m^{-\frac{i}{k}}  \gamma \left(\frac{i+r}{k},\frac{m z^k}{k!}\right)
            \right)
            ,
            \label{eq:phi:p}
        \end{aligned}
    \end{equation}
    where the implicit constants \(C_{6}(j,b)\) are defined in \eqref{eq:h0:1}.
\end{lemma}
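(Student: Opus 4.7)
My plan is to compute $\e\recnyR{r}$ by writing it as a sum over non-root nodes of conditional record probabilities, converting the height summation into closed form via a geometric series, and then expanding the resulting integral via Lemma~\ref{lem:fmexpand} and the substitution suggested by the form of $\psi_r$.

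First, for a non-root $v$ at height $h\ge 1$, conditioning on $T_{k,o}=y$ leaves the $h-1$ other ancestors' deletion clocks $T_{k,u}$ as iid Gamma$(k,1)$ variables, independent of $T_{r,v}$. Since $T_{r,v}$ has density $x^{r-1}e^{-x}/(r-1)!$ and $P(T_{k,u}>x)=Q(k,x)$,
\begin{equation*}
    \p{v \text{ is an }r\text{-record} \mid T_{k,o}=y} = \int_0^y \frac{x^{r-1}e^{-x}}{(r-1)!}\,Q(k,x)^{h-1}\,dx.
\end{equation*}
A full tree has exactly $2^h$ nodes at height $h\in\{1,\dots,m\}$, so by linearity and the geometric sum formula,
\begin{equation*}
    \e\recnyR{r} = \int_0^y \frac{x^{r-1}e^{-x}}{(r-1)!}\cdot\frac{2\bigl((2Q(k,x))^m-1\bigr)}{2Q(k,x)-1}\,dx.
\end{equation*}
Splitting the numerator, the dominant piece is $\frac{2^{m+1}}{(r-1)!}\int_0^y \frac{x^{r-1}e^{-x}Q(k,x)^m}{2Q(k,x)-1}\,dx$; the leftover $-2/[2Q(k,x)-1]$ contributes only $O(1)$ after integration (the apparent singularity at $Q(k,x)=1/2$ being a simple pole at one isolated point), which after factoring out $2^{m+1}$ is absorbed into the claimed $\bigO{m^{-(1+r)/k-1}}$ error.

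Second, I expand the integrand for small $x$ via three asymptotic expansions: Lemma~\ref{lem:fmexpand} provides $Q(k,x)^m = e^{-mx^k/k!}\bigl(1 + \sum_{j,b}C_5(j,b) m^j x^b + \mathrm{err}\bigr)$ uniformly on $(0,m^{-k_0})$; Taylor expansion gives $e^{-x}=\sum_{i\ge 0}(-x)^i/i!$; and from $1-Q(k,x) = x^k/k! + O(x^{k+1})$ I get $1/[2Q(k,x)-1] = 1 + 2x^k/k! + O(x^{k+1})$, the factor $2$ being exactly the parameter $c$ in the statement. Multiplying these three expansions and keeping terms up to the stated precision yields a finite polynomial in $x$ times $e^{-mx^k/k!}$. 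Each monomial is then evaluated via the substitution $u=mx^k/k!$:
\begin{equation*}
    \int_0^y x^{a}e^{-mx^k/k!}\,dx = \frac{1}{k}\Bigl(\frac{k!}{m}\Bigr)^{(a+1)/k}\gamma\!\Bigl(\frac{a+1}{k},\,\frac{my^k}{k!}\Bigr).
\end{equation*}
Matching to the four blocks of \eqref{eq:phi:p}: the constant $1$ term yields $\gamma(r/k,\cdot)$; the $2x^k/k!$ from $1/(2Q-1)$ yields the $c$-block; the $C_5(j,b)m^jx^b$ pieces populate the middle $\sum_{j,b}$ block with $C_6(j,b)$ absorbing the $(k!)^{(b+r)/k}$ factors produced by the substitution; and the $(-x)^i/i!$ contributions for $i=1,\dots,k$ produce the final $\sum_{i=1}^k$ block.

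The principal obstacle is the error analysis. The integration range must be split at $x = m^{-k_0}$: for $x > m^{-k_0}$ the factor $Q(k,x)^m$ decays super-polynomially in $m$, comfortably controlling the tail $\int_{m^{-k_0}}^y$. On $(0,m^{-k_0})$ every cross-product of the three expansion errors must be tracked after multiplication against $x^{r-1}e^{-x}$. The dominant contribution comes from the $\bigO{m^{k+1}x^{(k+1)^2}+mx^{2k+1}}$ error in Lemma~\ref{lem:fmexpand}; the substitution $u=mx^k/k!$ converts this to $\bigO{m^{-(1+r)/k-1}}$, matching the claimed bound exactly after factoring out $2^{m+1}$. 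The delicate step is verifying term-by-term that no cross-product of two expansion errors exceeds this order—a finite but tedious check.
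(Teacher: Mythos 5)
Your approach is essentially the paper's: rewrite $\e\recnyR{r}$ as a sum of conditional record probabilities over nodes, collapse the height sum to a closed geometric form, factor out $2^{m+1}$, expand $\frac{e^{-x}}{2Q(k,x)-1}\bigl(e^{x^k/k!}Q(k,x)\bigr)^m$ on $(0,m^{-k_0})$ via \autoref{lem:fmexpand} and the series of $Q(k,\cdot)$, then integrate termwise via the substitution $u=mx^k/k!$ (the paper's (3.14)) and bound the tail $[m^{-k_0},y]$. Splitting $h_0(x)=\frac{e^{-x}}{2Q(k,x)-1}$ into two separate Maclaurin series is a cosmetic variant of what the paper does. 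One caution: your parenthetical justification for dropping the leftover $-2/(2Q(k,x)-1)$ --- ``the apparent singularity at $Q(k,x)=1/2$ being a simple pole at one isolated point'' --- does not by itself give an $O(1)$ bound, since a simple pole is not integrable. The singularity is spurious (the original integrand $2\sum_{i=1}^{m}(2Q(k,x))^{i-1}$ is a polynomial in $Q$), and the correct way out, which you do invoke in the next paragraph, is to perform the algebraic splitting only on $(0,m^{-k_0})$ where $Q(k,x)>1/2$ and bound the tail directly from the finite geometric-sum form, exactly as the paper does in its final sentence. Make the parenthetical say that instead, and the proof is complete.
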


\begin{proof}
    Let \(v\) be a node of height \(i\). For \(v\) to be an \(r\)-record, conditioning on
    \(\clockRV{k}{o}=y\), we need \(\clockrv<y\) and \(\clockRV{k}{u}>\clockrv\) for every \(u\) that
    is an ancestor of \(v\). Recall that 
    \(\clockRV{r}{v}\eqd\sum_{j=1}^{r} \alarmRV{j}{v}\), where \(\alarmRV{j}{v}\) are \iid
    exponential \(1\) random variables.
    Thus \(\clockRV{k}{u}\) are \iid \(\Gam(k,1)\) random
    variables and  \(\clockrv\) is a \(\Gam(r,1)\) random variable, which are independent from everything else.
    (See Theorem 2.1.12 of \cite{durrett10} for the relation between exponential distributions and
    Gamma distributions.)

    The Gamma distribution \(\Gam(r,1)\) has the density function
    \begin{equation}\label{eq:gam:density}
        g_{r}(x)=
        \begin{cases}
        \dfrac{x^{r-1}\ee^{-x}}{\Gamma(r)}
        & \text{if $x \ge 0$}
        ,
        \\
        0
        & \text{if $x < 0$}
        ,
        \end{cases}
    \end{equation}
    which implies \(\p{\Gam(r,1)>x}=Q(r,x)\).
    Thus,
    \begin{equation}\label{eq:i:prob}
        \begin{aligned}
        \E{\recordRV{r}{v} | \clockRV{k}{o}=y}
        &
        =
        \int_{0}^{y} g_{r}(x) \p{\Gam(k,1)>x}^{i-1}\,\dd x
        \\
        &
        =
        \int_{0}^{y} \frac{x^{r-1}\ee^{-x}}{\Gamma(r)} Q(k,x)^{i-1}\,\dd x
        .
        \end{aligned}
    \end{equation}
    When the tree is full, each level \(i\) has \(2^{i}\) nodes. Thus in this case
    \begin{equation}\label{eq:y:full:0}
        \begin{aligned}
            \e \recnyR{r}
            &
            =
            \sum_{i = 1}^{m} 
            2^{i} 
            \int_{0}^{y} \frac{x^{r-1}\ee^{-x}}{\Gamma(r)} Q(k,x)^{i-1}\,\dd x
            \\
            &
            =
            \int_{0}^{y} 
            2
            \frac{x^{r-1}\ee^{-x}}{\Gamma(r)} 
            \left(  
            \sum_{i = 1}^{m} 
            (2 Q(k,x))^{i-1}
            \right)
            \,\dd x
            \\
            &
            =
            \frac{2^{m+1}}{r!}
            \int_{0}^{y}
            x^{r-1} 
            \ee^{-\frac{ m x^{k}}{k!}}
            h_{0}(x)
            \left(\ee^{\frac{ x^{k}}{k!}} Q(k,x)
            \right)^{m}
            \,\dd x
            +
            \bigO{1}
            ,
        \end{aligned}
    \end{equation}
    where
    \begin{equation}\label{eq:h0}
        h_{0}(x)
        \eqd
        \frac{\ee^{-x}}{(2 Q(k,x)-1)} 
        =
        1
        +
        \frac{2 x^{k}}{k!}
        +
        \sum_{i=1}^{k}\frac{(-1)^i x^i}{i!}
        +
        \bigO{x^{k+1}}
        ,
    \end{equation}
    as \(x \to 0\) by \cite[8.7.3]{DLMF}.
    Thus uniformly for \(0 < x \le m^{-k_{0}}\) with \(k_{0}\eqd\frac{1}{2}\left(
            \frac{1}{k}+\frac{1}{k+1} \right)\),
    \begin{equation}\label{eq:h0:1}
        \begin{aligned}
            h_{0}(x)
            \left(\ee^{\frac{ x^{k}}{k!}} Q(k,x)
            \right)^{m}
            =
            &
            1
            +
            \frac{2 x^k}{k!}
            +
            \sum_{i=1}^{k} \frac{(-1)^i x^i}{i!}
            +
            \sum _{j=1}^k \left(\sum _{b=j k+j}^{j k+k} x^b m^j C_{6}(j,b)\right)
            \\
            &
            +
            \bigO{x^{k+1}+m^{k+1} x^{(k+1)^2}+ m x^{2 k+1}}
            ,
        \end{aligned}
    \end{equation}
    for some constants \(C_{6}(j,b)\),
    where we expand the left-hand-side using \eqref{eq:h0} and \autoref{lem:fmexpand}, and then omit
    small order terms.

    Note that for \(b \ge 0\) and \(j \ge 0\),
    \begin{equation}\label{eq:int:1}
        \int_0^y \exp\left(-{\frac{m x^k}{k!}}  \right) x^{r-1} x^b m^j  \, \dd x
        =
        \frac{(k!)^{\frac{b+r}{k}}}{k}
        m^{j-\frac{b+r}{k}}\gamma \left(\frac{b+r}{k},\frac{m y^k}{k!}\right)
        .
    \end{equation}
    Thus if \(y < m^{-k_{0}}\),
    by putting the expansion \eqref{eq:h0:1} into \eqref{eq:y:full:0} and
    integrating term by term, we get \eqref{eq:y:full}.

    For \(y>m^{-k_{0}}\), it is not difficult to verify that the part of the integral in
    \eqref{eq:y:full:0} over \([m^{-k_{0}},y]\) and the difference
    \(\psi_{r}(m,y,2)-\psi_{r}(m,m^{-k_{0}},2)\) are both exponentially small and can be absorbed by the error term.
\end{proof}

\begin{lemma}\label{lem:expectation:m}
    If \(h(v)=m\), then
    \begin{equation}
        \begin{aligned}
        \E{\recordRV{r}{v} | \clockRV{k}{o}=y}
        = 
        \psi_{r}(m,y,1)
        +
        \bigO{
            m^{-\frac{1+r}{k}-1}
        }
        =
        \psi_{r}^{*}(m,y)
        +
        \bigO{
            m^{-\frac{1+r}{k}}
        }
        ,
        \end{aligned}
        \label{eq:y:last}
    \end{equation}
    where
    \begin{equation}\label{eq:psi:main}
        \psi_{r}^{*}(m,y)
        \eqd
        \frac{m^{-\frac{r}{k}} }{r!}
        \frac{\left({k!}\right)^{\frac{r}{k}}}{k} \gamma \left(\frac{r}{k},\frac{m y^k}{k!}\right)
        .
    \end{equation}
\end{lemma}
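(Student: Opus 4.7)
The plan is to mimic the calculation in the proof of \autoref{lem:expectation:full}, but applied to a single node at height $m$ rather than summed over the entire tree. Using \eqref{eq:i:prob} with $i=m$ together with the algebraic identity $\ee^{-x}Q(k,x)^{m-1}=\ee^{-mx^k/k!}\,\tilde{h}_0(x)\,(\ee^{x^k/k!}Q(k,x))^m$, where $\tilde{h}_0(x)\eqd\ee^{-x}/Q(k,x)$, I would first rewrite
\[
    \E{\recordRV{r}{v} \mid \clockRV{k}{o}=y}
    =
    \frac{1}{\Gamma(r)}\int_0^y x^{r-1}\,\ee^{-\frac{mx^k}{k!}}\,\tilde{h}_0(x)\,\left(\ee^{\frac{x^k}{k!}}Q(k,x)\right)^m \,\dd x.
\]
Here $\tilde{h}_0$ plays the role of $h_0$ in \eqref{eq:h0}, except that the factor $1/(2Q(k,x)-1)$ is replaced by $1/Q(k,x)$; in particular $\tilde{h}_0$ is analytic on $(0,\infty)$, with no spurious pole. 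The series expansion of $Q(k,x)$ from \cite[8.7.3]{DLMF} gives $1/Q(k,x)=1+x^k/k!+\bigO{x^{k+1}}$, hence
\[
    \tilde{h}_0(x) = 1+\frac{x^k}{k!}+\sum_{i=1}^{k}\frac{(-x)^i}{i!}+\bigO{x^{k+1}},
\]
which is exactly \eqref{eq:h0} with $2x^k/k!$ replaced by $x^k/k!$.

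Next I would apply \autoref{lem:fmexpand} to expand $\tilde{h}_0(x)(\ee^{x^k/k!}Q(k,x))^m$ uniformly on $(0,m^{-k_0})$. The resulting expansion has the same form as \eqref{eq:h0:1} with $2x^k/k!$ replaced by $x^k/k!$ and with the same coefficients $C_6(j,b)$: the only mismatch between $h_0$ and $\tilde{h}_0$ is in the $x^k$-coefficient, and cross terms between this coefficient and the $\sum C_5(j,b) m^j x^b$ part of \autoref{lem:fmexpand} produce monomials outside the range $jk+j\le b'\le jk+k$ of the main polynomial sum and can be absorbed into the error $\bigO{mx^{2k+1}}$. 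Integrating term by term against $x^{r-1}\ee^{-mx^k/k!}$ via \eqref{eq:int:1}, and handling the contribution from $(m^{-k_0},y)$ when $y>m^{-k_0}$ (together with the difference $\psi_r(m,y,1)-\psi_r(m,m^{-k_0},1)$) by the same exponentially small estimates used at the end of the proof of \autoref{lem:expectation:full}, then yields the first equality in \eqref{eq:y:last} with $\psi_r(m,y,1)$ (i.e.\ $c=1$) as the main term and error $\bigO{m^{-(1+r)/k-1}}$.

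For the second equality, I would compare $\psi_r(m,y,1)$ to $\psi_r^*(m,y)$ term by term from \eqref{eq:phi:p}. Every correction carries an extra factor of either $m^{-1}$ (from the $cm^{-1}$ term), $m^{j-b/k}$ with $j\ge 1$ and $b\ge jk+j$ so that $j-b/k\le -j/k\le -1/k$ (from the middle sum), or $m^{-i/k}$ with $i\ge 1$ (from the last sum). Since $\gamma(s,my^k/k!)\le \Gamma(s)$ uniformly in $y$, each correction is at most $\bigO{m^{-r/k-1/k}}=\bigO{m^{-(1+r)/k}}$, matching the claimed weaker error.

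The main obstacle is the technical bookkeeping of the expansion of $\tilde{h}_0(x)(\ee^{x^k/k!}Q(k,x))^m$ and the term-by-term integration, but this is entirely parallel to the corresponding steps in \autoref{lem:expectation:full}. No genuinely new idea is required beyond noting that the factor ``$2$'' coming from summing $2^i$ over all levels of the tree is replaced by ``$1$'' when we consider a single node at height $m$.
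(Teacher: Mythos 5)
Your proposal is correct and follows essentially the same route as the paper: the function you call $\tilde{h}_0(x)=\ee^{-x}/Q(k,x)$ is exactly the paper's $h_2(x)$, you apply \autoref{lem:fmexpand} in the same way to obtain the analogue of \eqref{eq:h0:1} with the $x^k$-coefficient $2/k!$ replaced by $1/k!$, you integrate term by term via \eqref{eq:int:1}, and you bound the discrepancy $\psi_r(m,y,1)-\psi_r^*(m,y)$ by the same $m^{-1/k}$-per-term accounting.
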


\begin{proof}
    When \(v\) is a node of height \(m\), by \eqref{eq:i:prob},
    \begin{equation}\label{eq:i:prob:m}
        \begin{aligned}
            \E{\recordRV{r}{v} | \clockRV{k}{o}=y}
            &
            =
            \int_{0}^{y} \frac{x^{r-1}\ee^{-x}}{\Gamma(r)} Q(k,x)^{m-1}\,\dd x
            \\
            &
            =
            \frac{1}{\Gamma(r)}
            \int_{0}^{y}
            x^{r-1} 
            \frac{x^{r-1}\ee^{-x}}{\Gamma(r)}
            h_{2}(x)
            \left(\ee^{\frac{ x^{k}}{k!}}
                Q(k,x)
            \right)^{m}
            \,\dd x
            ,
        \end{aligned}
    \end{equation}
    where
    \(h_{2}(x) \eqd\frac{\ee^{-x}}{Q(k,x)}.\)
    Expanding \(h_{2}(x)\) by \cite[8.7.3]{DLMF} and using \autoref{lem:fmexpand}, we have, uniformly for \(x \in (0, m^{-k_0})\)
    with \(k_{0}\eqd\frac{1}{2}\left( \frac{1}{k}+\frac{1}{k+1} \right)\)
    \begin{equation}\label{eq:h2:1}
        \begin{aligned}
            h_{2}(x)
            \left(\ee^{\frac{ x^{k}}{k!}} Q(k,x)
            \right)^{m}
            =
            &
            1
            +
            \frac{x^k}{k!}
            +
            \sum_{i=1}^{k} \frac{(-1)^i x^i}{i!}
            +
            \sum _{j=1}^k \left(\sum _{b=j k+j}^{j k+k} x^b m^j C_{6}(j,b)\right)
            \\
            &
            +
            \bigO{x^{k+1}+m^{k+1} x^{(k+1)^2}+ m x^{2 k+1}}
            .
        \end{aligned}
    \end{equation}
    Note that this differs from \eqref{eq:h0:1} only by the constant in the term \(x^{k}/k!\).
    Thus the first equality in \eqref{eq:y:last} follows as in \autoref{lem:expectation:full}.
    The second equality follows by keeping only the main term of \(\psi_{r}(m,y,1)\).
\end{proof}

The next lemma computes \(\e{\recnyr}\) when the tree is not full. The reason why it is formulated in
terms of \(\mbar\) will be clear in the proof of \autoref{lem:mbar}.

\begin{lemma}\label{lem:expectation}
    Let \(\varphi_{r}(n,y)\eqd\e \recnyR{r}\).
    Let
    \begin{equation}\label{psi:bar}
        \begin{aligned}
            \psibar_{r}(n,m,z)
            &
            \eqd
            2^{m+1}
            \psi_{r}(m,z,2)
            -
            (2^{m+1}-n)
            \psi_{r}(m,z,1)
            \\
            &
            =
            n \psi_{r}(m,z,1)
            +
            \frac{\left({k!}\right)^{\frac{r}{k}}}{k{r!}}
            \frac{2^{m+1}}{m^{1+\frac{r}{k}} }
            \gamma
            \left(1+\frac{r}{k},\frac{m z^k}{k!}\right)
            .
        \end{aligned}
    \end{equation}
    If \(2^{\mbar} - 1 \le n \le 2^{\mbar+1}-1\), then
    \begin{equation}\label{eq:phi}
            \varphi_{r}(n,y)
            =
            \psibar_{r}(n,\mbar,y)
            +
            \bigO{
                n
                \mbar^{-\frac{1+r}{k}-1}
            }
            .
    \end{equation}
\end{lemma}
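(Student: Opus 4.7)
The plan is to view the complete binary tree $\Tbinn$ of $n$ nodes as the full binary tree of height $\mbar$ (which has $2^{\mbar+1}-1$ nodes) from which $2^{\mbar+1}-1-n$ bottom-level leaves have been deleted. Since $\recnyR{r} = \sum_{v} \recordRV{r}{v}$, linearity of expectation gives $\varphi_{r}(n,y)$ as the expected value for the full tree of height $\mbar$ minus the sum of $\E{\recordRV{r}{v} \mid \clockRV{k}{o}=y}$ over the missing leaves at height $\mbar$.

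First I would apply \autoref{lem:expectation:full} to the full-tree term, obtaining $2^{\mbar+1}\psi_{r}(\mbar,y,2) + \bigO{2^{\mbar+1}\mbar^{-(1+r)/k-1}}$. Then I would apply the more precise form of \autoref{lem:expectation:m}, namely $\psi_{r}(\mbar,y,1)+\bigO{\mbar^{-(1+r)/k-1}}$, to each of the at most $2^{\mbar}$ missing leaves; summing gives a total contribution $(2^{\mbar+1}-1-n)\psi_{r}(\mbar,y,1) + \bigO{2^{\mbar}\mbar^{-(1+r)/k-1}}$. Subtracting and using $n \asymp 2^{\mbar}$ (which follows from $2^{\mbar}-1 \le n \le 2^{\mbar+1}-1$), the two error terms consolidate into a single $\bigO{n\mbar^{-(1+r)/k-1}}$.

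The resulting expression differs from the first form of $\psibar_{r}(n,\mbar,y)$ in \eqref{psi:bar} only by one extra summand of $\psi_{r}(\mbar,y,1) = \bigO{\mbar^{-r/k}}$, arising from the off-by-one discrepancy between $2^{\mbar+1}-1-n$ and $2^{\mbar+1}-n$. Since $\mbar^{-r/k}$ is dominated by $n\mbar^{-(1+r)/k-1}$, this is absorbed into the error, giving \eqref{eq:phi}. The second equality in \eqref{psi:bar} is then a direct algebraic manipulation: inspection of \eqref{eq:phi:p} shows that only the term carrying the coefficient $c$ distinguishes $\psi_{r}(m,z,2)$ from $\psi_{r}(m,z,1)$, so $\psi_{r}(\mbar,y,2) - \psi_{r}(\mbar,y,1) = \frac{(k!)^{r/k}}{k\,r!}\,\mbar^{-1-r/k}\,\gamma(1+\tfrac{r}{k},\,\mbar y^{k}/k!)$, and the identity $2^{\mbar+1}\psi_{r}(\mbar,y,2) - (2^{\mbar+1}-n)\psi_{r}(\mbar,y,1) = n\,\psi_{r}(\mbar,y,1) + 2^{\mbar+1}(\psi_{r}(\mbar,y,2)-\psi_{r}(\mbar,y,1))$ produces the claimed second form.

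I do not anticipate a genuine obstacle, since the argument is essentially bookkeeping built on top of \autoref{lem:expectation:full} and \autoref{lem:expectation:m}. The only delicate point is the boundary case $n = 2^{\mbar}-1$, where the complete tree is actually full of height $\mbar-1$ and has no nodes at level $\mbar$; the ``remove leaves from a full tree of height $\mbar$'' viewpoint still applies (all $2^{\mbar}$ would-be leaves are removed), so the identity above goes through without modification.
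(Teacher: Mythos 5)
Your argument is correct and follows the same route as the paper: express $\Tbin_n$ as the full tree of height $\mbar$ with some level-$\mbar$ leaves removed, apply \autoref{lem:expectation:full} and \autoref{lem:expectation:m}, and observe that the off-by-one between the true leaf deficit $2^{\mbar+1}-1-n$ and the coefficient $2^{\mbar+1}-n$ in $\psibar_r$ contributes only $\psi_r(\mbar,y,1)=\bigO{\mbar^{-r/k}}$, which is absorbed in the error term. Your treatment of the boundary case $n=2^{\mbar}-1$ (remove all $2^{\mbar}$ leaves) is an equally valid, slightly more uniform alternative to the paper's device of temporarily adjoining one extra node and then subtracting its contribution.
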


\begin{proof}
    Assume first that \(\mbar = m\).  When the tree is not necessarily full, the estimate of
    \(\varphi_{r}(n,y)\) in \eqref{eq:y:full} over counts the number of nodes at height \(m\) by
    \(2^{m+1}-n\). The contribution of the over counted nodes in \eqref{eq:y:full} can be estimated
    using \eqref{eq:y:last}. Subtracting this part from \eqref{eq:y:full} gives \eqref{eq:phi}.

    The only other possible case is that \(\mbar=m+1\) and the tree is full.  The result follows
    easily by adding an extra node \(v\) at height \(\mbar\), computing the total expectation of
    \(r\)-records for this tree by the case already studied, and subtracting
    \(\E{\recordRV{r}{v}|\clockRV{k}{o}=y}\sim\psi_{r}(m,y,1)\) from \eqref{eq:y:last}.
\end{proof}

\begin{corollary}
    We have
    \begin{equation}\label{eq:mean}
        \begin{aligned}
            &
            \e \recnR{r}
            =
            \rescaleInv{r}
            (\mu_{r,n}-\lg\lg n)
            +
            \frac{C_{2}(r) 2^{m+1}}{\lg(n)^{\frac{r}{k}+1}}
            +
            \bigO{
                n \lg(n)^{-\frac{r+1}{k}-1}
            }
            ,
        \end{aligned}
    \end{equation}
    where \(\mu_{r,n}\) is defined in \eqref{eq:mu:new}.
\end{corollary}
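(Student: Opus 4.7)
The plan is to apply \autoref{lem:expectation} after conditioning on the time at which the root is removed. Since the root has no ancestors, $\recordRV{r}{o} = 1$ almost surely for every $1 \le r \le k$, hence $\recnR{r} = 1 + \sum_{v \ne o} \recordRV{r}{v}$. With $\clockRV{k}{o} \sim \Gam(k, 1)$, conditioning and Fubini give
\begin{equation*}
    \e \recnR{r} = 1 + \int_0^\infty g_k(y)\, \varphi_r(n, y)\, \dd y,
    \qquad
    g_k(y) \eqd \frac{y^{k-1} \ee^{-y}}{(k-1)!}.
\end{equation*}
Substituting $\varphi_r(n, y) = \psibar_r(n, \mbar, y) + \bigO{n \mbar^{-(r+1)/k - 1}}$ from \autoref{lem:expectation} reduces the task to evaluating $\int_0^\infty g_k(y)\, \psibar_r(n, \mbar, y)\, \dd y$; since $\mbar = \lg n + O(1)$, replacing $\mbar$ by $\lg n$ at the end only adds an error of the same order, which fits within the $O$-term of \eqref{eq:mean}.

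The workhorse will be the family of integrals $I(a) \eqd \int_0^\infty g_k(y)\, \gamma(a, \mbar y^k/k!)\, \dd y$. A Fubini swap converts this to
\begin{equation*}
    I(a) = \int_0^\infty \ee^{-t} t^{a-1}\, Q\!\left(k, (k!t/\mbar)^{1/k}\right) \dd t,
\end{equation*}
and expanding $Q(k, x) = 1 - x^k/k! + k x^{k+1}/(k+1)! - \cdots$ from the series \cite[8.7.3]{DLMF} then yields
\begin{equation*}
    I(a) = \Gamma(a) - \frac{\Gamma(a+1)}{\mbar} + \bigO{\mbar^{-(k+1)/k}}.
\end{equation*}

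With $I(a)$ in hand, I would integrate $\psibar_r(n, \mbar, y)$ term by term using the explicit representations \eqref{psi:bar} and \eqref{eq:phi:p}. The leading term $\frac{\mbar^{-r/k}}{r!} \cdot \frac{(k!)^{r/k}}{k} \gamma(r/k, \mbar y^k/k!)$ contributes $\frac{n (k!)^{r/k} \Gamma(r/k)}{r! k \mbar^{r/k}}$, which pins down $C_{2}(r)$ (from \autoref{lem:rescale}) and matches the $\frac{k}{r} \lg n$ piece of $\mu_{r, n}$ scaled by $\rescaleInv{r}$. The sub-leading contributions --- from the $c = 1$ correction, the single sum over $i$ and double sum over $(j, b)$ in \eqref{eq:phi:p}, and the $-\Gamma(\cdot+1)/\mbar$ correction of $I(r/k)$ applied to the leading term --- sum to $\sum_{i=1}^k C_{1}(r, i) \lg(n)^{1-i/k} \cdot \rescaleInv{r}$, and this identity \emph{defines} the constants $C_{1}(r, i)$ in the statement of \autoref{thm:1:rec}. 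Finally, the boundary term $\frac{(k!)^{r/k}}{kr!}\frac{2^{m+1}}{\mbar^{1+r/k}}\gamma(1 + r/k, \mbar y^k/k!)$ in \eqref{psi:bar} integrates to $\frac{C_{2}(r) 2^{m+1}}{\mbar^{r/k+1}}$ up to an absorbed error, giving the isolated second term of \eqref{eq:mean}.

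The hard part is not analytic but bookkeeping: there are on the order of $k^2$ gamma-function terms in \eqref{eq:phi:p} to track, and contributions at each order $\mbar^{-r/k - i/k}$ ($i = 0, 1, \ldots, k$) must be collected from different sources before being compared against the corresponding $C_{1}(r, i)$. The residual errors from the $\bigO{\mbar^{-(k+1)/k}}$ remainder of $I(a)$, after multiplication by prefactors $\mbar^{-r/k - \alpha}$ with $\alpha \ge 0$, are at worst $\bigO{n \mbar^{-(r+k+1)/k}} = \bigO{n \mbar^{-(r+1)/k - 1}}$, safely inside the error budget.
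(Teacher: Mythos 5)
Your first identity $\e \recnR{r} = 1 + \int_0^\infty g_k(y)\,\varphi_r(n,y)\,\dd y$ is correct (the root is always an $r$-record, and $\clockRV{k}{o}\sim\Gam(k,1)$), and the Fubini computation giving $I(a) = \Gamma(a) - \Gamma(a+1)/\mbar + \bigO{\mbar^{-(k+1)/k}}$ checks out; for instance $I(1) = m/(m+1) = 1 - 1/m + O(m^{-2})$ when $k=1$. This is a genuinely different route from the paper's proof. The paper avoids the integral $I(a)$ altogether: it embeds $\Tbinn$ as a subtree of a larger tree so that the expected record count equals $\varphi_r(n'',\infty) - \tfrac12\varphi_r(n',\infty)$, where $n''=n+2^{m+1}$ and $n'=2^{m+2}-1$. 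Sending the argument to $\infty$ turns every $\gamma(a,\cdot)$ into $\Gamma(a)$ exactly, so no integration against $g_k$ is needed. Your route pays for this with the extra $I(a)$-corrections, which you correctly observe cancel between the first two terms of \eqref{eq:phi:p}, but it also produces a genuine residual $-C_2(r)\,n\,\mbar^{-r/k-1}$ that the paper instead gets from the shift $m\mapsto m+1$. Both approaches are exact identities, so when done carefully they must agree.

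The genuine gap is the last sentence of your second paragraph. You claim that replacing $\mbar$ by $\lg n$ ``only adds an error of the same order, which fits within the $O$-term of \eqref{eq:mean}.'' This is not true for the leading term. Writing $\lg n = m+\alpha_n$ with $\alpha_n=\fracPart{\lg n}\in[0,1)$, one has $\mbar^{-r/k} - (\lg n)^{-r/k} = \tfrac{r\alpha_n}{k}\,m^{-r/k-1} + O(m^{-r/k-2})$, so the replacement changes the leading contribution $n\mbar^{-r/k}(k!)^{r/k}\Gamma(r/k)/(kr!)$ by $C_2(r)\,\alpha_n\,n\,m^{-r/k-1}$, which is of order $n\lg(n)^{-r/k-1}$. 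The stated error budget $\bigO{n\lg(n)^{-(r+1)/k-1}}$ is smaller by a factor $\lg(n)^{-1/k}$, so this term cannot simply be absorbed. Indeed $n\lg(n)^{-r/k-1}$ is exactly the order of the isolated term $C_2(r)2^{m+1}/\lg(n)^{r/k+1}$ and of the $C_1(r,k)$ contribution to $\rescaleInv{r}(\mu_{r,n}-\lg\lg n)$, so you must track the $\alpha_n$-dependent correction explicitly and verify that it is reproduced by the expansion of the right-hand side of \eqref{eq:mean} (which also carries $\alpha_n$-dependence through $\lg n = m+\alpha_n$ and $2^{m+1}=2^{1-\alpha_n}n$), rather than discarding it into the $O$-term. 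Without that check, the bookkeeping in your final two paragraphs does not yet close.
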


\begin{proof}
\autoref{lem:expectation} gives an asymptotic
expansion of \(\varphi_{r}(n,y) \eqd \E{\recnR{r}|\clockRV{k}{o}=y}\). 
To get rid of this conditioning, first consider a full binary tree of height \(m'=m+1\), i.e., a
tree of size \(n'=2^{m+2}-1\). It is easy to see that \(\varphi_{r}(n',\infty)\) is exactly twice of
\(\e \recnR{r}\) for \(n=2^{m+1}-1\). This solves the case when the tree is full.

The general case can be solved similarly. Consider a binary tree, with the right subtree of the root
being \(\Tbinn\) (possibly not full), and the left subtree of the root being \(\Tbin_{2^{m+1}-1}\),
i.e., a full binary tree of height \(m\). This tree has size \(n''=n+2^{m+1}\). Thus
\(\varphi_{r}(n'',\infty)\) is the expected number of \(r\)-records in \(\Tbinn\), plus the expected
number of \(r\)-record in  \(\Tbin_{2^{m+1}-1}\), which is \(\varphi(n',\infty)/2\) by the
previous paragraph. Thus
\begin{equation}\label{eq:mean:phi}
    \e \recnR{r} =
    \varphi_{r}(n'',\infty)-\frac{1}{2} \varphi_{r}(n',\infty),
\end{equation}
which implies \eqref{eq:mean} by \autoref{lem:expectation}.
\end{proof}

\begin{myRemark}
    Comparing \eqref{eq:mean} and \eqref{eq:rec:1:dist} in \autoref{thm:1:rec}, we see that
    \(\recnr\) is concentrated well above their means (at a distance of about \(n
        \lg(\lg(n))/\lg(n)^{1+r/k}\)). Thus \(\p{\recnr < \e \recnr} \to 0\). See also Remark 1.4 of
    \cite{janson04}.
\end{myRemark}

\begin{myRemark}
    The simplest case that \(r=k\) and the tree is full can also be computed directly by noticing
    that
    \begin{equation}\label{eq:mean:k:1}
        \begin{aligned}
        \e{\recnR{k}} 
            &
        =
        \sum_{v} \frac{1}{h(v)+1}
        =
        \sum_{i=0}^{m} \frac{2^{i} }{i+1}
        =
        -2^{m+1} \Phi (2,1,m+2)-\frac{1}{2} (\ii \pi )
        \\
        &
        =
        \frac{2^{m+1}}{m+2}\left(1+\sum _{n=1}^{N-1} (-1)^{n-1} (m+2)^{-n}
            \text{Li}_{-n}(2)+\bigO{m^{-N}}\right)
        &
        (N \in \dsN)
        \\
        &
        =2^{m+1}
        \left({m}+{2}{m^{-3}}+{6}{m^{-4}}+{38}{m^{-5}}+O\left({m^{-6}}\right)\right)
        &
        (N = 5)
        ,
        \end{aligned}
    \end{equation}
    where
    \(\Phi (z,s,a)\) denotes Hurwitz-Lerch zeta function
    \cite[25.14]{DLMF}, \(\text{Li}_{s}(z)\) denotes the polylogarithm function \cite[25.12]{DLMF},
    and the last step uses an asymptotic expansion of \(\Phi (z,s,a)\) given
    in \cite{Cai011}.
\end{myRemark}

\begin{lemma}\label{lem:var}
    We have
    \begin{equation}\label{eq:var}
        \V{\recnyr} = \bigO{n^2 m^{-\frac{2 r+1}{k}}}.
    \end{equation}
\end{lemma}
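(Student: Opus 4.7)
The plan is to expand the variance as the sum of variances plus covariances over pairs of indicators. Writing $\recnyr=\sum_{v\neq o}\recordrv$, we have
\[
\V{\recnyr}=\sum_v\V{\recordrv}+2\sum_{u\prec v}\Cov{\recordRV{r}{u},\recordRV{r}{v}}+\sum_{\substack{u,v\neq o\\u,v\text{ incomparable}}}\Cov{\recordRV{r}{u},\recordRV{r}{v}}.
\]
The diagonal is at most $\e{\recnyr}=\bigO{nm^{-r/k}}$ by \autoref{lem:expectation}, and is negligible against the target $n^2m^{-(2r+1)/k}$ since $n\ge 2^m$.

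For comparable pairs $u\prec v$ with $h(u)=i<j=h(v)$, I would condition on $\clockRV{r}{u}=x<x'=\clockRV{r}{v}$ and use that the clocks of $v$'s ancestors other than $o$ and $u$ contribute an independent factor $Q(k,x')^{j-2}$, while the conditional probability $\p{\clockRV{k}{u}>x'\mid\clockRV{r}{u}=x}$ equals $Q(k-r,x'-x)$ when $r<k$. This yields
\[
\E{\recordRV{r}{u}\recordRV{r}{v}\mid\clockRV{k}{o}=y}=\int_0^y\!\!\int_0^{x'}g_r(x)g_r(x')Q(k,x')^{j-2}Q(k-r,x'-x)\,\dd x\,\dd x'
\]
for $r<k$; for $r=k$ the product vanishes and the corresponding covariance is non-positive, hence harmless. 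Summing over the $2^j$ choices of $v$ and the $j-1$ choices of $u$, and performing the integral asymptotics in the style of \autoref{lem:expectation:full}, gives a contribution of order $n^2m^{-(2r+1)/k}$.

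For incomparable pairs with nearest common ancestor $w$ at height $a$, let $\cG_w$ be the $\sigma$-algebra generated by $\{\clockRV{k}{w'}:w'\preceq w\}$. Given $\cG_w$ and $\clockRV{k}{o}=y$, the two indicators $\recordRV{r}{u}$ and $\recordRV{r}{v}$ depend on disjoint independent clocks located in the two subtrees rooted at the children of $w$, and are therefore conditionally independent. Writing $M=\min(y,\min_{o\neq w'\preceq w}\clockRV{k}{w'})$ and $\beta_\ell(z)\eqd\int_0^z g_r(x)Q(k,x)^\ell\,\dd x$, this gives
\[
\Cov{\recordRV{r}{u},\recordRV{r}{v}\mid\clockRV{k}{o}=y}=\Cov{\beta_{h(u)-a-1}(M),\beta_{h(v)-a-1}(M)},
\]
which is non-negative by monotonicity and, by Cauchy--Schwarz, at most $\V{\beta_{\min(h(u),h(v))-a-1}(M)}^{1/2}\V{\beta_{\max(h(u),h(v))-a-1}(M)}^{1/2}$.

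The main obstacle is to establish the sharp bound $\V{\beta_\ell(M)}=\bigO{m^{-(2r+1)/k}}$, uniformly for $\ell$ of order $m$ and for $M$ arising as the minimum of roughly $a$ iid $\Gam(k,1)$ clocks. This requires exploiting that $M$ concentrates on scale $\Theta(a^{-1/k})$ and expanding $\beta_\ell$ to first order around that scale, much as in the Taylor expansions underlying \autoref{lem:expectation:full}. Once the sharp variance estimate is in hand, summation over heights $i,j$ and $a$---using the counting bound of $\bigO{2^{i+j-a}}$ for incomparable pairs with NCA at height $a$---delivers the total bound $\bigO{n^2m^{-(2r+1)/k}}$.
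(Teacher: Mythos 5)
Your decomposition into diagonal, comparable, and incomparable pairs is finer than necessary, and the incomparable‑pair argument (conditional independence given the $\sigma$-algebra generated by the NCA's ancestral clocks, then Cauchy--Schwarz) is sound in spirit --- but the key intermediate estimate you state is both unproved and, as written, false.

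The claimed bound $\V{\beta_\ell(M)}=\bigO{m^{-(2r+1)/k}}$ uniformly cannot hold when the NCA height $a$ is allowed up to order $m$. Write $Z = \ell M^k/k!$; for $M$ the minimum of about $a$ independent $\Gam(k,1)$ clocks, $M^k/k!$ is approximately $\Exp(a)$, so $Z$ is approximately $\Exp(a/\ell)$. Since $\beta_\ell(M)\approx C\ell^{-r/k}\gamma(r/k,Z)$, a short computation (e.g.\ using $\E{\gamma(r/k,Z)}=\Gamma(r/k)(1+c)^{-r/k}$ with $c=a/\ell$) gives $\V{\beta_\ell(M)}=\Theta\left(a\,m^{-1-2r/k}\right)$, not the $a$-free bound you state. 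For $a$ comparable to $m$ this is of order $m^{-2r/k}$, which is $m^{1/k}$ too large. The final answer is saved only because the pair count $2^{s+t-a}$ contributes a $2^{-a}$ weight so that $\sum_a 2^{-a}\,a\,m^{-1-2r/k}=\bigO{m^{-1-2r/k}}\le\bigO{m^{-(2r+1)/k}}$; but you need to track the $a$-dependence to see this, and you have not established the bound in any form, only flagged it as "the main obstacle." That obstacle is the entire content of the lemma.

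The paper takes a different and cleaner route around this: it never bounds the variance of a functional of $M$. Instead it (i) restricts to "good" pairs with NCA at height $i\le m/3$ and both nodes at height $\ge 2m/3$, (ii) uses \autoref{lem:expectation:m} to write both conditional expectations as $\psi_r^*$ at shifted arguments plus $\bigO{m^{-(2r+1)/k}}$, and (iii) Taylor-expands the difference $\psi_r^*(s-i,\cdot)\psi_r^*(t-i,\cdot)-\psi_r^*(s,\cdot)\psi_r^*(t,\cdot)$ using the elementary Lipschitz estimate $\Gamma(a,x_1)-\Gamma(a,x_2)\le(a/e)^a(x_2-x_1)/x_1$, which yields $\Cov=\bigO{m^{-(2r+1)/k}+i\,m^{-1-2r/k}}$ directly. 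The bad pairs (which include every comparable pair, since $u\prec v$ forces $i=\min(s,t)$ and the good-pair inequalities cannot both hold) number only $\bigO{n^{5/3}}$ and are dispatched by $|\Cov|\le1$. Your double-integral treatment of comparable pairs is therefore unnecessary; moreover your aside that "for $r=k$ the product vanishes" is incorrect --- both $u\prec v$ can simultaneously be $k$-records whenever $\clockRV{k}{v}<\clockRV{k}{u}<\min_{w\prec u}\clockRV{k}{w}$. You also have no analogue of the bad-pair cutoff, which you need because the asymptotic expansion of $\beta_\ell$ behind any variance estimate breaks down when $\ell=h(\cdot)-a-1$ is small.
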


\begin{proof}
    Consider two nodes, \(v\) and \(w\) of heights \(s\) and \(t\) respectively. Let \(u\) be the 
    node that is furthest away from the root among the common ancestor of \(v\) and \(w\). Let \(i=h(u)\).

    We call the pair \( (v,w)\) \emph{good} if \(i \le \frac{m}{3}\) and \(s, t \ge \frac{2 m}{3}\).
    Otherwise we call it \emph{bad}.
    Assume for now that \( (v,w)\) is good.

    Let \(o = u_{0},\dots,u_{i}=u\) be the path from the root to \(u\). Let \(Z = \min_{1 \le j \le
    i} \clockRV{k}{u_{i}}\). 

    Note that conditioning on \(\clockRV{k}{o}=y\) and \(Z=z\), the events that
    \(v\) is an \(r\)-record and that \(w\) is an \(r\)-record are independent. 
    Thus by \autoref{lem:expectation:m} and the assumption that \( (v,w)\) is good,
    \begin{equation}
            \E{\recordRV{r}{v} \recordRV{r}{w}|\clockRV{k}{o}=y,Z=z}
            =
            \psi_{r}^{*}(s-i,z \wedge y)
            \psi_{r}^{*}(t-i,z \wedge y)
            +
            \bigO{
                m^{-\frac{2 r+1}{k}}
            }
            ,
        \label{eq:var:0}
    \end{equation}
    where \(a \wedge b \eqd \min\{a,b\}\).

    Since \(\psi_{r}^{*}(a,w)\) is increasing in \(w\), \eqref{eq:var:0} implies that, after
    averaging over \(z\),
    \begin{equation}\label{eq:var:1}
        \E{\recordRV{r}{v} \recordRV{r}{w}|\clockRV{k}{o}=y}
        \le
        \psi_{r}^{*}(s-i,y)
        \psi_{r}^{*}(t-i,y)
        +
        \bigO{
            m^{-\frac{2 r+1}{k}}
        }
        .
    \end{equation}
    On the other hand, again by \autoref{lem:expectation:m} and the assumption that \( (v,w)\) is
    good,
    \begin{equation}\label{eq:var:2}
        \begin{aligned}
            \E{\recordRV{r}{v}|\clockRV{k}{o}=y}
            \E{\recordRV{r}{w}|\clockRV{k}{o}=y}
            =
            \psi_{r}^{*}(s,y)
            \psi_{r}^{*}(t,y)
            +
            \bigO{
                m^{-\frac{2 r+1}{k}}
            }
            .
        \end{aligned}
    \end{equation}
    Therefore, by the definition of \(\psi_{r}^{*}(a,w)\) in \eqref{eq:psi:main}, 
    the first order term of the above is 
    \begin{equation}\label{eq:var:3}
        \begin{aligned}
            \Cov{\recordRV{r}{v},\recordRV{r}{w}|\clockRV{k}{o}=y}
            &
            \le
            \psi_{r}^{*}(s-i,y)
            \psi_{r}^{*}(t-i,y)
            -
            \psi_{r}^{*}(s,y)
            \psi_{r}^{*}(s,y)
            +
            \bigO{
                m^{-\frac{2 r+1}{k}}
            }
            \\
            &
            =
            \bigO{m^{-\frac{2r}{k}} }
            \left(
                i m^{-1}
                +
                \Gamma \left(\frac{r}{k},\frac{s y^k}{\Gamma (k+1)}\right)
                -
                \Gamma \left(\frac{r}{k},\frac{(s-i) y^k}{\Gamma (k+1)}\right)
            \right.
            \\
            &
            \qquad
            +
            \Gamma \left(\frac{r}{k},\frac{t y^k}{\Gamma (k+1)}\right)
            -
            \Gamma \left(\frac{r}{k},\frac{(t-i) y^k}{\Gamma (k+1)}\right)
            \\
            &
            \qquad
            +
            \Gamma \left(\frac{r}{k},\frac{(s-i) y^k}{\Gamma (k+1)}\right) 
            \Gamma \left(\frac{r}{k},\frac{(t-i) y^k}{\Gamma (k+1)}\right)
            \\
            &
            \qquad
            \left.
                -
                \Gamma \left(\frac{r}{k},\frac{s y^k}{\Gamma (k+1)}\right) 
                \Gamma \left(\frac{r}{k},\frac{t y^k}{\Gamma (k+1)}\right)
            \right)
        \end{aligned}
        .
    \end{equation}
    For \(x_{1}\le x_{2}\) and \(0 \le a \le 1\),
    \begin{equation}\label{eq:gam:diff}
        \Gamma (a,x_{1})-\Gamma (a,x_{2})
        =
        \int_{x_{1}}^{x_{2}} \ee^{-x} x^{a-1} \, \dd x
        \le
        \ee^{-x_{1}} x_{1}^{a-1} (x_{2}-x_{1})
        \le
        \left( 
            \frac{a}{\ee}
        \right)^{a}
        \frac{x_{2}-x_{1}}{x_{1}}
        ,
    \end{equation}
    since \(\ee^{-x} x^{a-1}\) is decreasing and \(\ee^{-x} x^a\leq \left(\frac{a}{\ee}\right)^a \). Thus
    when \((v,w)\) is good,
    \begin{equation}\label{eq:gam:diff:1}
        \left|
            \Gamma \left(\frac{r}{k},\frac{s y^k}{\Gamma (k+1)}\right)
            -
            \Gamma \left(\frac{r}{k},\frac{(s-i) y^k}{\Gamma (k+1)}\right)
        \right|
        =
        \bigO{\frac{i}{m}}
        .
    \end{equation}
    Cancelling other terms in \eqref{eq:var:3} in a similar way shows that
    \begin{equation}\label{}
        \Cov{\recordRV{r}{v},\recordRV{r}{w}|\clockRV{k}{o}=y}
        =
        \bigO{m^{-\frac{2 r+1}{k}}+i m^{-1-\frac{2r}{k}}}
        .
    \end{equation}
    Given \(i,s,t\), there are at most \(2^{s+t-i}\) choices of \(u,v,w\). Thus
    \begin{equation}\label{eq:var:sum:good}
        \begin{aligned}
            &
            \sum_{\text{good }(v,w)}
            \Cov{\recordRV{r}{v},\recordRV{r}{w}|\clockRV{k}{o}=y}
            \\
            &
            \qquad
            \le
            \sum_{i=1}^{m}
            \sum_{s=1}^{m}
            \sum_{t=1}^{m}
            2^{s+t-i}
            \bigO{i m^{-1-\frac{2r}{k}}+m^{-\frac{2 r+1}{k}}}
            =
            \bigO{n^{2} m^{-\frac{2 r+1}{k}}}
            .
        \end{aligned}
    \end{equation}
    The number of bad pairs is at most
    \begin{equation}\label{eq:var:sum:bad}
        \sum_{i>\frac{m}{3},s,t \le m}
        2^{s+t-i}
        +
        2
        \sum_{i>0,t < \frac{2 m}{3}, s\le m}
        2^{s+t-i}
        =
        \bigO{2^{2 m-\frac{m}{3}}}
        =
        \bigO{n^{\frac{5}{3}}}
        .
    \end{equation}
    Using the fact that \( \Cov{\recordRV{r}{v},\recordRV{r}{w}|\clockRV{k}{o}=y} \le 1\), it
    follows from \eqref{eq:var:sum:good} and \eqref{eq:var:sum:bad} that
    \begin{equation}\label{eq:var:final}
        \V{\recnyr}
        =
        \sum_{v,w}
        \Cov{\recordRV{r}{v},\recordRV{r}{w}|\clockRV{k}{o}=y}
        =
        \bigO{n^{2} m^{-\frac{2 r+1}{k}}}
        ,
    \end{equation}
    as the lemma claims.
\end{proof}

Recall that \(L\eqd\floor*{\left(2-\frac{1}{2 k}\right)\lg\lg n}\). Let \( \mySeqBig{v_{i}}{1 \le i \le 2^{L}}\) be the
\(2^{L}\) nodes of height \(L\). Let \(Y_{i}\) be the minimum of the \(\clockRV{k}{v}\) for all
nodes \(v\) on
the path between the root and \(v_{i}\).

\begin{lemma}\label{lem:sum:psi}
    We have
    \begin{equation}\label{eq:sum:psi}
        \recnR{r}=
        \sum _{i=1}^{2^L} \varphi_{r}(n_{i},Y_{i})
        +
        O_{p}\left(n m^{-1-\frac{1}{4 k} - \frac{r}{k}}\right)
        .
    \end{equation}
\end{lemma}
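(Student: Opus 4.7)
The plan is to condition on the $\sigma$-algebra $\cF_L$ generated by the alarm clocks at nodes of height at most $L$; this fixes $(Y_i)_{1\le i\le 2^L}$, and the $r$-records produced by nodes strictly below level $L$ then decouple into $2^L$ conditionally independent pieces, one per subtree hanging at $v_i$. Writing $\cT_i$ for the subtree of $\Tbinn$ rooted at $v_i$ and $n_i$ for its size, I would split
\[
    \recnR{r}
    =
    \sum_{v:\,h(v)\le L}\recordRV{r}{v}
    +
    \sum_{i=1}^{2^L} N_i,
    \qquad
    N_i \eqd \sum_{v\in \cT_i\setminus\{v_i\}}\recordRV{r}{v}.
\]
The first (upper) sum has at most $2^{L+1}$ nonzero terms, so it is bounded deterministically by $\bigO{\lg(n)^{2-1/(2k)}}$, which is absorbed into the target error $n m^{-1-1/(4k)-r/k}$ for $n$ large.

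The structural heart of the argument is to identify the conditional law of each $N_i$. For $v\in\cT_i\setminus\{v_i\}$ the strict ancestors of $v$ in $\Tbinn$ split into the path $o = u_0\prec u_1 \prec \cdots \prec u_L = v_i$, whose $\clockRV{k}{\cdot}$-minimum is exactly $Y_i$ (the path includes $v_i$), and the strict ancestors of $v$ lying inside $\cT_i\setminus\{v_i\}$. Consequently the record event $\clockRV{r}{v} < \min_{u \prec v} \clockRV{k}{u}$ coincides with the record event written for $\cT_i$ in isolation, with the root clock $\clockRV{k}{v_i}$ replaced by $Y_i$. Since $\cT_i$ is itself a complete binary tree of size $n_i$, and the clocks inside distinct $\cT_i$'s are mutually independent and independent of $\cF_L$, it follows that $(N_i)_{i=1}^{2^L}$ are conditionally independent given $\cF_L$ and that the conditional law of $N_i$ is the law of $\recnyR{r}$ for a complete binary tree of size $n_i$ with $y = Y_i$. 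By the definition of $\varphi_r$ we then have $\E{N_i\mid\cF_L}=\varphi_r(n_i, Y_i)$, and \autoref{lem:var} applied inside each subtree (whose height $m-L$ is comparable to $m$) yields $\V{N_i\mid\cF_L} = \bigO{n_i^2\, m^{-(2r+1)/k}}$ uniformly.

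The final step is a conditional second-moment estimate. Using the conditional independence and the tower property,
\[
    \V{\sum_{i=1}^{2^L}\bigl(N_i - \varphi_r(n_i, Y_i)\bigr)}
    =
    \E{\sum_{i=1}^{2^L} \V{N_i\mid\cF_L}}
    =
    \bigO{m^{-(2r+1)/k}\sum_{i=1}^{2^L} n_i^2}.
\]
Since $\sum_i n_i \le n$ and $\max_i n_i \le 2^{m-L+1} = \bigO{n/2^L}$, one gets $\sum_i n_i^2 = \bigO{n^2/2^L}$. Substituting $2^L \asymp m^{2-1/(2k)}$ leaves a variance of order $n^2 m^{-2-1/(2k)-2r/k}$, hence a standard deviation of order $n m^{-1-1/(4k)-r/k}$, and Chebyshev's inequality delivers \eqref{eq:sum:psi}. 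The main obstacle is the structural identification above: one must check that each $\cT_i$ is indeed a complete binary tree (in $\Tbinn$, all but at most one of them is in fact full) and that the implied constants in \autoref{lem:var} are uniform over these subtrees. Both facts follow from the definition of a complete binary tree and from the uniformity of the $\bigO{\cdot}$ in \autoref{lem:var}, but they are where one has to be attentive.
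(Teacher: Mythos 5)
Your proposal is correct and follows essentially the same route as the paper, which refers the reader to Lemma 2.3 of \citet{janson04}: condition on the $\sigma$-algebra generated by the clocks at heights at most $L$, identify the conditional law of each $N_i$ as that of $\cX^{r}_{n_i,Y_i}$, and combine the variance bound of \autoref{lem:var} with Chebyshev via the conditional variance decomposition. Your exponent bookkeeping using $\sum_i n_i^2 = \bigO{n^2/2^L}$ and $2^L = \thetaf{m^{2-1/(2k)}}$ lands precisely at the claimed error $n m^{-1-1/(4k)-r/k}$.
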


\begin{proof}
    The proof uses the estimate of the variance in \autoref{lem:var} and exactly the same argument
    of Lemma 2.3 in \cite{janson04}. We omit the details.
\end{proof}

\section{Transformation into a triangular array}

In this section, we prove \autoref{lem:rescale}, which shows that \(\recnR{r}\), properly rescaled and shifted, can 
be written as a sum of independent random variables. Three technical lemmas
\autoref{lem:Y1:bound}, \autoref{lem:mbar}, \autoref{lem:triangular} are needed.

\begin{proposition}\label{lem:rescale}
    Let \(\alpha_{n}\eqd\fracPart{\lg n}\) and 
    \(\beta_{n}\eqd\fracPart{\lg \lg n}\).
    Then
    \begin{equation}\label{eq:rescale}
        \begin{aligned}
            &
            \frac{m^{\frac{r}{k}+1}}{n C_{2}(r)}\recnR{r}
            -
            \frac{k}{r} \lg (n)
            -\sum _{i=1}^k 
            C_{1}(r,i)
            \lg (n)^{1-\frac{i}{k}} 
            -\lg (\lg (n))
            \\
            &
            \qquad
            \qquad
            =
            2^{1-\alpha_{n} }+\alpha_{n} -\beta_{n}
            -\ell + L + 1
            -C_{3}(r) \sum_{v:h(v)\leq L} \xi_{r,v}
            +\op{1}
            ,
        \end{aligned}
    \end{equation}
    where
    \begin{equation}\label{eq:xi:r:v}
        \xi_{r,v}
        \eqd
        \frac{m n_{v}}{n}\Gamma \left(\frac{r}{k},\frac{m \clockRV{k}{v}^k}{k!}\right)
        ,
    \end{equation}
    and
    \begin{equation}\label{eq:C123}
        \begin{aligned}
            &
            C_{1}(r,i)
            \eqd
            C_{7}(r,i)
            +
            \sum_{j=1}^{i}
            C_{8}(r,j,j k + i)
            ,
            \\
            &
            C_{2}(r)
            \eqd
            \frac{(k!)^{r/k} \Gamma \left(\frac{r}{k}\right)}{k^2 \Gamma (r)}
            ,
            \quad
            C_{3}(r)
            \eqd
            \frac{1}{\Gamma \left(1+\frac{r}{k}\right)}
            ,
            \\
            &
            C_{7}(r,i)
            \eqd
            \frac{(-1)^i k (k!)^{i/k} \Gamma \left(\frac{i+r}{k}\right)}{r i! \Gamma \left(\frac{r}{k}\right)}
            ,
            \quad
            C_{8}(r,j,b)
            \eqd
            \frac{k (k!)^{b/k} C_{6}(j,b) \Gamma \left(\frac{b+r}{k}\right)}{r \Gamma \left(\frac{r}{k}\right)}
            .
        \end{aligned}
    \end{equation}
\end{proposition}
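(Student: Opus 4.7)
The plan is to chain together Lemmas~\ref{lem:sum:psi}, \ref{lem:expectation}, and \ref{lem:mbar} to reduce $\recnR{r}$ to a manageable form. First, Lemma~\ref{lem:sum:psi} writes $\recnR{r}$ as $\sum_{i=1}^{2^L} \varphi_{r}(n_i, Y_i)$ up to an $\Op{n m^{-1-1/(4k) - r/k}}$ error, which becomes $\op{1}$ after multiplication by the target prefactor $m^{r/k+1}/(n C_{2}(r))$. Next, Lemma~\ref{lem:expectation} replaces each $\varphi_{r}(n_i, Y_i)$ by $\psibar_{r}(n_i, \mbar_i, Y_i) + \bigO{n_i \mbar_i^{-(1+r)/k - 1}}$, whose aggregate error summed over the $2^L$ subtrees is also $\op{1}$ after rescaling. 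Finally, Lemma~\ref{lem:mbar} replaces each $\mbar_i$ uniformly by $m - L$, allowing the prefactors to pull out of the sum.

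Next, $\psibar_{r}(n_i, m - L, Y_i)$ is expanded using the explicit formula for $\psi_{r}$ in~\eqref{eq:phi:p}. Each incomplete Gamma function is split via $\gamma(a, z) = \Gamma(a) - \Gamma(a, z)$: the constants $\Gamma(a)$ contribute deterministically, while the remainders $\Gamma(a, (m - L) Y_i^k / k!)$ carry the fluctuations. Lemma~\ref{lem:Y1:bound} ensures that $(m - L) Y_i^k / k!$ is large for all but $\op{2^L}$ indices $i$, so every remainder with $a > r/k$ is exponentially small in the typical regime and can be absorbed into $\op{1}$ after rescaling. Only the term with $a = r/k$ retains non-negligible randomness.

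The deterministic constants are then collected against the target expansion. The leading constant $\Gamma(r/k) \cdot (k!)^{r/k}/(k \cdot r!)$ combines with the prefactor to yield $\frac{k}{r} m$, which becomes $\frac{k}{r} \lg(n)$ modulo a residual from $m = \lg(n) - \alpha_n$. The sub-leading $\Gamma((r+i)/k)$ and $\Gamma((r+b)/k)$ terms from the two finite sums in~\eqref{eq:phi:p} contribute $C_{7}(r, i) \lg(n)^{1 - i/k}$ and $C_{8}(r, j, jk + i) \lg(n)^{1 - i/k}$ respectively, which sum to $C_{1}(r, i) \lg(n)^{1 - i/k}$. The $\lg(\lg(n))$ term originates from the $(2^{m+1}/m^{1 + r/k}) \gamma(1 + r/k, \cdot)$ correction in $\psibar_{r}$, together with the exact identity $2^{m+1}/n = 2^{1 - \alpha_n}$ and $\gamma(1 + r/k, \infty) = \Gamma(1 + r/k)$. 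The remaining deterministic offset $2^{1 - \alpha_n} + \alpha_n - \beta_n - \ell + L + 1$ consolidates the leftover shifts from the substitutions $m = \lg(n) - \alpha_n$, $\ell = \lg(\lg(n)) - \beta_n$, the definition of $L$, and the replacement $m \mapsto m - L$ in the Gamma arguments.

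The random part $\sum_i n_i \Gamma(r/k, (m - L) Y_i^k / k!)$ is finally converted to the node-indexed sum $\sum_{v: h(v) \le L} \xi_{r,v}$ using Lemma~\ref{lem:triangular}. The intuition is that $\Gamma(r/k, \cdot)$ is non-negligible only for very small arguments, so the minimum $Y_i = \min_{v \preceq v_i} \clockRV{k}{v}$ is essentially attained by a single ancestor $v^*$ on each contributing path; summing $n_i$ over those $i$ with $v^* \preceq v_i$ recovers $n_{v^*}$, and the prefactor $(k!)^{r/k}/(k \cdot r! \cdot C_{2}(r))$ simplifies exactly to $C_{3}(r) = 1/\Gamma(1 + r/k)$. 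This last step is the main obstacle, since rigorously passing from the leaf-indexed minima to independent node-level contributions requires both a quantitative disjointness argument for the rare small-$\clockRV{k}{v}$ events along ancestor paths and sharp tail estimates for $\Gamma(r/k, \cdot)$. A secondary, tedious obstacle is the triple-nested finite sum whose coefficients must coalesce exactly into the $C_{1}(r, i)$ given in~\eqref{eq:C123}.
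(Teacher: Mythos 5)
Your overall plan follows the same path as the paper, but you are re-deriving too much. The paper's proof of this proposition is very short: it simply takes \eqref{eq:sum:tri} from Lemma~\ref{lem:triangular} as given, divides both sides by $nm^{-r/k-1}C_{2}(r)$ to produce \eqref{eq:rescale:1}, and then subtracts the normalization $m^{r/k+1}\lg(n)^{-r/k-1}\bigl(\frac{k}{r}\lg n + \sum_i C_{1}(r,i)\lg(n)^{1-i/k} + \lg\lg n\bigr)$. All of the chain from Lemma~\ref{lem:sum:psi} through Lemma~\ref{lem:mbar} is already encapsulated in Lemma~\ref{lem:triangular}; you do not need to unwind it again here.

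There are two bookkeeping errors in your accounting of terms that you should fix. First, you attribute the $\lg\lg n$ term to the $(2^{m+1}/m^{1+r/k})\gamma(1+\tfrac{r}{k},\cdot)$ correction in $\psibar_{r}$, but that correction contributes exactly the $2^{1-\alpha_{n}}$ term on the right-hand side of \eqref{eq:rescale} (after rescaling, its coefficient simplifies to $1$ via $\Gamma(1+\tfrac{r}{k})=\tfrac{r}{k}\Gamma(\tfrac{r}{k})$). The $\lg\lg n$ you subtract on the left is purely a chosen centering and plays off against $-\ell-\beta_{n}$ on the right, since $\lg\lg n = \ell+\beta_{n}$ exactly. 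Second, you say Lemma~\ref{lem:mbar} "replaces each $\mbar_{i}$ uniformly by $m-L$," but the lemma's conclusion \eqref{eq:sum:Yi} is written in $m$, the height of the \emph{whole} tree. The nontrivial work there is converting from $\mpr = m-L$ to $m$, and the difference $\mpr^{-r/k} - m^{-r/k} \approx \tfrac{r}{k}Lm^{-r/k-1}$ is precisely what generates the additive $+L$ on the right-hand side of \eqref{eq:rescale} (after rescaling its coefficient is also $1$). Both corrections are straightforward, but as written your proposal misidentifies where two of the deterministic terms originate.
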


\begin{proof}[Proof of {\autoref{lem:rescale}}]
    Expanding \eqref{eq:sum:tri} in \autoref{lem:triangular} bellow
    and dividing both sides by \(n m^{-\frac{r}{k}-1} C_{2}(r)\) shows that
    \begin{equation}\label{eq:rescale:1}
        \begin{aligned}
            \frac{m^{\frac{r}{k}+1}}{n C_{2}(r)}\recnR{r}
            =
            &
            \frac{k m}{r}+L+\frac{2^{m+1}}{n}+1
            +
            \underset{i=1}{\overset{k}{\sum }} C_{7}(r,i) m^{1-\frac{i}{k}}
            \\
            &
            +
            \underset{j=1}{\overset{k}{\sum }}\underset{b=j (k+1)}{\overset{(j+1) k}{\sum }} C_{8}(r,j,b) m^{-\frac{b}{k}+j+1}
            -
            C_{3}(r) \underset{v}{\sum }\xi_{r,v}
            +
            \bigO{m^{-\frac{1}{4 k}}}
            .
        \end{aligned}
    \end{equation}
    Subtracting 
    \begin{equation}\label{eq:rescale:sub}
            m^{\frac{r}{k}+1} \lg (n)^{-\frac{r}{k}-1} 
            \left( 
            \frac{k}{r} \lg (n) +\sum _{i=1}^k C_{1}(r,i) \lg (n)^{1-\frac{i}{k}} +\lg (\lg (n))
            \right)
            ,
    \end{equation}
    from both sides of \eqref{eq:rescale:1} gives \eqref{eq:rescale}.
\end{proof}

\begin{lemma}\label{lem:Y1:bound}
    Recall that \(Y_{1}\) has the distribution of the minimum of \(L+1\) independent \(\Gam(k,1)\)
    random variables. Let \(\mpr \eqd m-L\). 
    Let \(a>0\) be a constant.
    Then
    \begin{align}
        &
        \mathbb{E}\left[\Gamma \left(a,\frac{\mpr Y_{1}^k}{k!}\right)\right]
        =
        \bigO{\frac{L}{m}}
        &
        \text{if $a > 0$}
        ,
        \label{eq:Y1:U3}
        \\
        &
        \mathbb{E}\left[\Gamma \left(a,\frac{\mpr Y_{1}^k}{k!},\frac{m Y_{1}^k}{k!}\right)\right]
        =
        \bigO{
            \frac{L^{2}}{m^{2}}
        }
        &
        \text{if $1 > a > 0$}
        \label{eq:Y1:U2}
        ,
        \\
        &
        \E{
            \mpr^{-a} \Gamma \left(a,\frac{\mpr Y_{1}^k}{k!}\right)-m^{-a} \Gamma \left(a,\frac{m Y_{1}^k}{k!}\right)
        }
        =
        \bigO{
            \frac{
                L^2
            }{
                m^{a+2}
            }
        }
        &
        \text{if $1 > a > 0$}
        \label{eq:Y1:diff}
        .
    \end{align}
\end{lemma}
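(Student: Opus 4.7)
The plan is to parametrize via the density of \(Y_1\) and reduce each of the three estimates to elementary gamma-function calculus. Since \(Y_1\) is the minimum of \(L+1\) i.i.d.\ \(\Gam(k,1)\) variables, it has density \(f_{Y_1}(y) = (L+1)\, Q(k,y)^L\, \ee^{-y} y^{k-1}/(k-1)!\). The substitution \(u = y^k/k!\) (under which \(y^{k-1}\,\dd y/(k-1)! = \dd u\)), together with the expansion \(\log Q(k,y) = -y^k/k! + \bigO{y^{k+1}}\), shows that in the \(u\)-variable this density is essentially \((L+1)\, \ee^{-Lu}\,\dd u\), with negligible error on the range \(u = \bigO{1/L}\) that dominates the computation. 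On this range \(\mpr u\) is typically of order \(m/L\), which is large.

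For \eqref{eq:Y1:U3}, after this reduction the expectation is essentially \((L+1)\int_0^\infty \ee^{-Lu}\,\Gamma(a, \mpr u)\,\dd u\); the substitution \(v = \mpr u\) together with the identity \(\int_0^\infty \Gamma(a,v)\,\dd v = \Gamma(a+1)\) bounds this by \(\bigO{L/\mpr} = \bigO{L/m}\).

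For \eqref{eq:Y1:U2}, I would split on whether \(\mpr u \le 1\) or \(\mpr u > 1\). In the small-\(u\) regime, the mean-value theorem applied to \(\lambda \mapsto \gamma(a, \lambda u)\) gives \(\Gamma(a, \mpr u, m u) = \bigO{L\, m^{a-1} u^a}\); in the large-\(u\) regime, monotonicity of \(\ee^{-t} t^{a-1}\) (valid for \(0<a<1\)) yields \(\Gamma(a, \mpr u, m u) \le L u\cdot \ee^{-\mpr u}(\mpr u)^{a-1}\). Integrating each piece against \((L+1)\, \ee^{-Lu}\,\dd u\) produces \(\bigO{L^2/m^2}\) in both cases.

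For \eqref{eq:Y1:diff}, the clean approach is to use the identity
\begin{equation*}
    \frac{\dd}{\dd \lambda}\bigl[\lambda^{-a}\, \Gamma(a, \lambda z)\bigr] = -\lambda^{-a-1}\, \Gamma(a+1, \lambda z),
\end{equation*}
which follows by direct differentiation and the recurrence \(\Gamma(a+1,x) = a\,\Gamma(a,x) + x^a \ee^{-x}\). Integrating in \(\lambda\) from \(\mpr\) to \(m\), taking expectations, and applying the argument of \eqref{eq:Y1:U3} with parameter \(a+1\) (which gives \(\E{\Gamma(a+1, \lambda Y_1^k/k!)} = \bigO{L/\lambda}\) uniformly for \(\lambda \in [\mpr, m]\)) reduces the claim to \(\int_{\mpr}^m \lambda^{-a-1}\cdot \bigO{L/\lambda}\,\dd \lambda = \bigO{L^2/m^{a+2}}\). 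The main obstacle is the sharpening needed for \eqref{eq:Y1:U2}: a naive single-regime bound like \(\Gamma(a,\mpr u, mu) \le (a/\ee)^a L/\mpr\) (cf.\ \eqref{eq:gam:diff}) only gives \(\bigO{L/m}\) after integration, and obtaining the factor \(\bigO{L^2/m^2}\) requires exploiting the \(\ee^{-\mpr u}\) decay in the large-\(u\) regime to extract the additional factor of \(L/m\).
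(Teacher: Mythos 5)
Your proposal is correct. For \eqref{eq:Y1:U3} and \eqref{eq:Y1:U2} your calculations are essentially the same as the paper's (after the substitution $u=y^k/k!$), although your heuristic that the density is ``essentially $(L+1)\ee^{-Lu}$'' is more than what is needed: the upper bounds you derive only use $\ee^{-Lu}\le 1$, which is the same as using the trivial bound $Q(k,y)^L\ee^{-y}\le 1$ on the density, and in that form no error analysis of the density approximation is required. Two minor remarks on \eqref{eq:Y1:U2}: the split into $\mpr u\le 1$ and $\mpr u>1$ is unnecessary, since the monotonicity of $\ee^{-t}t^{a-1}$ for $0<a<1$ already gives $\Gamma(a,\mpr u,mu)\le Lu\,\ee^{-\mpr u}(\mpr u)^{a-1}$ for \emph{all} $u>0$ (exactly the paper's \eqref{eq:gamma:x1:x2}), and your MVT bound $\bigO{Lm^{a-1}u^a}$ in the small regime should more precisely be $\bigO{L\mpr^{a-1}u^a}$, which is the same up to constants only because $\mpr\sim m$.

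The genuinely different step is your treatment of \eqref{eq:Y1:diff}. The paper decomposes the random variable algebraically as
$(\mpr^{-a}-m^{-a})\Gamma(a,\mpr Y_1^k/k!)+m^{-a}\,\Gamma(a,\mpr Y_1^k/k!,m Y_1^k/k!)$ and then applies \eqref{eq:Y1:U3} and \eqref{eq:Y1:U2} to the two terms. You instead use the identity $\frac{\dd}{\dd\lambda}\bigl[\lambda^{-a}\Gamma(a,\lambda z)\bigr]=-\lambda^{-a-1}\Gamma(a+1,\lambda z)$ (correct, by the recurrence \eqref{eq:shift:gamma}), integrate in $\lambda$ from $\mpr$ to $m$, and then apply Fubini and the uniform estimate $\E{\Gamma(a+1,\lambda Y_1^k/k!)}=\bigO{L/\lambda}$, which is \eqref{eq:Y1:U3} with $a+1$ in place of $a$ and $\lambda$ in place of $\mpr$. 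This is a cleaner route: it bypasses \eqref{eq:Y1:U2} entirely in the proof of \eqref{eq:Y1:diff}, expresses the difference as a single integral of non-negative quantities, and avoids the two-term decomposition. The final integral $\int_{\mpr}^m\lambda^{-a-2}\,\dd\lambda=\bigO{L/m^{a+2}}$ delivers the stated bound.
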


\begin{proof}
    Since
    \begin{equation}\label{eq:Y1:cdf}
        \p{Y_{1}>x} = \p{\Gam(k,1)>x}^{L+1} =Q(k,x)^{L+1}
        ,
    \end{equation}
    the density of \(Y_{1}\) is
    \begin{equation}\label{eq:Y1:pdf}
        g_{Y_{1}}(x)
        =
        \begin{cases}
            \dfrac{\left(1+L\right)}{\Gamma (k)}\ee^{-x} x^{k-1} Q(k,x)^{L} & \text{if $x \ge 0$},
            \\
            0 & \text{if $x < 0$},
        \end{cases}
    \end{equation}
    by the derivative formula
    \begin{equation}\label{eq:Gam:D}
        \frac{\mathrm{d}}{%
            \mathrm{d}z}Q\left(a,z\right)
        =
        -
        \frac{
        z^{a-1}\ee^{-z}
        }{
            \Gamma\left( a \right)
        }
        ,
        \quad
        \frac{\dd }{\dd x}Q^{-1}(a,x)=-\Gamma (a) \exp\left({Q^{-1}(a,x)}\right) Q^{-1}(a,x)^{1-a}
        ,
    \end{equation}
    see  \cite[8.8.13]{DLMF}.
    For \(0 < a \le 1\) and \(z \ge 0\), by the inequality \cite[8.10.11]{DLMF},
    \begin{equation}\label{eq:gam:ieq}
        \Gamma\left( a, z \right) 
        \le 
        \Gamma(a) (1 - (1 - \ee^{-z})^a)
        \le
        \Gamma(a) \ee^{-z}
        .
    \end{equation}
    Therefore,
    \begin{equation}\label{eq:U3:0}
        \begin{aligned}
        \mathbb{E}\left[\Gamma \left(a,\frac{\mpr Y_{1}^k}{k!}\right)\right]
        &
        =
        \int_{0}^{\infty}
        g_{Y_{1}}(x)
        \Gamma \left(a,\frac{\mpr x^k}{k!}\right)
        \,
        \dd x
        \\
        &
        \le
        \bigO{L}
        \int_{0}^{\infty}
        x^{k-1}
        \exp \left(-\frac{\mpr x^k}{k!}\right)
        \dd x
        =
        \bigO{\frac{L}{m}}
        .
        \end{aligned}
    \end{equation}
    For \(a > 1\) and \(z \ge 0\), also by \cite[8.10.11]{DLMF},
    \begin{equation}\label{eq:gam:ieq:1}
        \begin{aligned}
            \Gamma\left( a, z \right) 
            &
            \le
            \Gamma (a) 
            \left(1-\left(1-\exp\left({-\frac{m \Gamma (a+1)^{-1/a} x^k}{k!}}  \right)\right)^a\right)
            \\
            &
            \le
            a 
            \Gamma (a) 
            \exp\left({-\frac{m \Gamma (a+1)^{-1/a} x^k}{k!}}\right)
            ,
        \end{aligned}
    \end{equation}
    where the last inequality follows from that \( (1-b)^{a} \ge 1- a b\) for \(b \in (0,1)\) and
    \(a > 1\).
    Therefore, similar to \eqref{eq:U3:0},
    \begin{equation}\label{eq:U3:1}
        \begin{aligned}
        \mathbb{E}\left[\Gamma \left(a,\frac{\mpr Y_{1}^k}{k!}\right)\right]
        &
        \le
        \bigO{L}
        \int_{0}^{\infty}
        x^{k-1}
        \exp\left( 
        {-\frac{m x^k}{k!}\Gamma (a+1)^{-\frac{1}{a}}}
        \right)
        \,
        \dd x
        =
        \bigO{\frac{L}{m}}
        .
        \end{aligned}
    \end{equation}
    Thus we have \eqref{eq:Y1:U3}.

    For \eqref{eq:Y1:U2}, first by \eqref{eq:Y1:pdf}, 
    \begin{equation}\label{eq:U2:0}
        \mathbb{E}\left[\Gamma \left(a,\frac{\mpr Y_{1}^k}{k!},\frac{m Y_{1}^k}{k!}\right)\right]
        =
        \int_{0}^{\infty}
        g_{Y_{1}}(x)
        \Gamma \left(a,\frac{\mpr x^k}{k!}, \frac{m x^{k}}{k!}\right)
        \,
        \dd x
        .
    \end{equation}
    Since \(\ee^{-x} x^{a-1}\) is decreasing when \(0 < a\le 1\), for \(0 < x_{1} < x_{2}\)
    \begin{equation}\label{eq:gamma:x1:x2}
        \Gamma\left( a, x_{1}, x_{2} \right)
        =
        \int_{x_{1}}^{x_{2}} \ee^{-x} x^{a-1} \, \dd x
        \le
        (x_{2}-x_{1})\ee^{-x_{1}} x_{1}^{a-1}
        .
    \end{equation}
    Therefore,
    \begin{equation}\label{eq:gamma:x1:x2:1}
        \Gamma \left(a,\frac{\mpr x^k}{k!},\frac{m x^k}{k!}\right)
        \le
        L \hat{m}^{a-1} (k!)^{-a} x^{a k} \exp\left({-\frac{\hat{m} x^k}{k!}}\right)
        .
    \end{equation}
    Substituting the above inequality into \eqref{eq:U2:0} and integrating gives \eqref{eq:Y1:U2}.

    For \eqref{eq:Y1:diff}, note that
    \begin{equation}\label{eq:Y1:diff:0}
        \qquad
        \begin{aligned}
            &
            \hat{m}^{-a} \Gamma \left(a,\frac{Y_{1}^k \hat{m}}{k!}\right)-m^{-a} \Gamma \left(a,\frac{m Y_{1}^k}{k!}\right)
            \\
            &
            =
            \left(\hat{m}^{-a}-m^{-a}\right) \Gamma \left(a,\frac{\hat{m} Y_{1}^k}{k!}\right)+m^{-a}
            \Gamma \left(a,\frac{Y_{1}^k \hat{m}}{k!},\frac{m Y_{1}^k}{k!}\right)
            ,
        \end{aligned}
    \end{equation}
    where \(\Gamma(a,x_{0},x_{1})\eqd \Gamma(a,x_{0})-\Gamma(a,x_{1}).\)
    The result follows easily from \eqref{eq:Y1:U3} and \eqref{eq:Y1:U2}.
\end{proof}

The next two lemmas first remove the \(\mbar\) (see \autoref{lem:expectation}) hidden in the
representation \eqref{eq:sum:psi} then transform it into a sum of independent random variables.

\begin{lemma}\label{lem:mbar}
    Let \(n_{i}\) be the size of the subtree rooted at \(v_{i}\). Then
    \begin{equation}\label{eq:sum:Yi}
        \begin{aligned}
        \recnR{r}
        =
        &
        \psibar_{r}(n,m,\infty)
        +
        \frac{r (k!)^{r/k} \Gamma \left(\frac{r}{k}\right)}{k^2 r!} n m^{-\frac{k+r}{k}} L
        -
        \sum_{i=1}^{2^{L}}
        \frac{n_{i} }{k r!} 
        \left(\frac{m}{k!}\right)^{-\frac{r}{k}} 
        \Gamma \left(\frac{r}{k},\frac{m Y_{i} ^k}{k!}\right)
        \\
        &
        +
        O_{p}\left(
            n m^{-1-\frac{1}{4 k} - \frac{r}{k}}
        \right)
        .
        \end{aligned}
    \end{equation}
\end{lemma}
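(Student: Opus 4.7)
The plan is to apply \autoref{lem:expectation} pointwise inside the representation provided by \autoref{lem:sum:psi}, and then split every incomplete gamma via $\gamma(a,\cdot) = \Gamma(a) - \Gamma(a,\cdot)$ so as to separate a fully deterministic bulk from the single surviving $Y_i$-dependent term. Concretely, starting from $\recnR{r} = \sum_{i=1}^{2^L}\varphi_r(n_i,Y_i) + \op{n m^{-1-1/(4k)-r/k}}$, I substitute $\varphi_r(n_i,Y_i) = \psibar_r(n_i,\mbar_i,Y_i) + \bigO{n_i \mbar_i^{-1-(1+r)/k}}$. In a complete binary tree the $2^L$ subtrees hanging off level $L$ have sizes differing by at most one, so the heights $\mbar_i$ take only the two values $\mpr = m-L$ and $\mpr+1$; unifying them to $\mpr$ and summing the per-subtree $\bigO{\cdot}$ errors yields a contribution of order $\bigO{n m^{-1-(1+r)/k}}$, comfortably within the tolerance.

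Next I would expand $\psibar_r(n_i,\mpr,Y_i)$ using \eqref{psi:bar} and \eqref{eq:phi:p} as a linear combination of terms $c_{a,j}\, n_i\, \mpr^{j-a/k}\, \gamma(a,\mpr Y_i^k/k!)$ together with a tail contribution $\frac{(k!)^{r/k}}{k r!}\frac{2^{\mpr+1}}{\mpr^{1+r/k}}\gamma(1+r/k,\mpr Y_i^k/k!)$. Writing each $\gamma(a,\cdot)=\Gamma(a)-\Gamma(a,\cdot)$, the $\Gamma(a)$ pieces are deterministic and, summed over $i$ (with $2^L \cdot 2^{\mpr+1} = 2^{m+1}$ in the tail), reassemble into $n\psi_r(\mpr,\infty,1) + \frac{(k!)^{r/k}}{k r!}\frac{2^{m+1}}{\mpr^{1+r/k}}\Gamma(1+r/k)$. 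The incomplete-gamma pieces $\Gamma(a,\mpr Y_i^k/k!)$ with $a>r/k$ have expectation $\bigO{L/m}$ by \autoref{lem:Y1:bound}\eqref{eq:Y1:U3}, and after multiplication by their coefficients and summation they all fall into the $\op{\cdot}$ budget. Only the leading incomplete gamma with $a=r/k$ survives: it contributes $-\sum_i n_i \frac{(k!)^{r/k}\mpr^{-r/k}}{k r!}\Gamma(r/k,\mpr Y_i^k/k!)$, and I would swap $\mpr$ for $m$ inside this single surviving term via \autoref{lem:Y1:bound}\eqref{eq:Y1:diff} at expected cost $\bigO{n L^2 m^{-2-r/k}}$, producing exactly the third summand $-\sum_i \frac{n_i}{k r!}(m/k!)^{-r/k}\Gamma(r/k, m Y_i^k/k!)$ of \eqref{eq:sum:Yi}.

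Finally, the deterministic bulk $n\psi_r(\mpr,\infty,1) + \frac{(k!)^{r/k}}{k r!}\frac{2^{m+1}}{\mpr^{1+r/k}}\Gamma(1+r/k)$ must be converted into $\psibar_r(n,m,\infty)$ plus the advertised linear $L$-correction. Taylor-expanding $\mpr^{-a} = m^{-a} + a L m^{-a-1} + \bigO{L^2 m^{-a-2}}$ throughout \eqref{eq:phi:p}, the only first-order shift of the required magnitude stems from the leading monomial $n\,\frac{(k!)^{r/k}\Gamma(r/k)}{k r!}\mpr^{-r/k}$ and evaluates to $\frac{r(k!)^{r/k}\Gamma(r/k)}{k^2 r!}\, n L\, m^{-(k+r)/k}$, matching the middle term of \eqref{eq:sum:Yi}; every subleading monomial in $\psi_r$ and the tail coefficient $\mpr^{-1-r/k}$ carries an extra factor $m^{-1/k}$ (or better) relative to the leading one, so its $\mpr\to m$ shift is smaller than the tolerance. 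The main obstacle is the disciplined accounting of the many $\bigO{\cdot}$ and $\op{\cdot}$ contributions, in particular checking that each incomplete gamma with $a > r/k$ and each higher-order Taylor remainder obeys $\op{n m^{-1-1/(4k)-r/k}}$; the crucial slack throughout is $L/m = \bigO{\lg\lg n/\lg n}$ being comfortably smaller than $m^{-1/(4k)}$, which is precisely what makes \autoref{lem:Y1:bound} strong enough to carry the whole chain of approximations.
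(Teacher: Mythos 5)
Your proposal is correct and follows essentially the same route as the paper: apply \autoref{lem:expectation} per subtree with \(\mpr = m-L\), split each \(\gamma(a,\cdot) = \Gamma(a) - \Gamma(a,\cdot)\), control the \(Y_i\)-dependent incomplete gammas via \autoref{lem:Y1:bound} (with Markov's inequality to pass from expectations to \(O_p\) bounds), and Taylor-expand \(\mpr^{-a} = m^{-a} + a L m^{-a-1} + \bigO{L^{2}m^{-a-2}}\) to extract the linear \(L\)-correction while absorbing everything else into the error. The paper organizes this as ``define the target \(x_i\) and bound \(\varphi_r(n_i,Y_i) - x_i\) term by term'' (cf.\ \eqref{eq:phi:ni:1}--\eqref{eq:xi:diff:sum}), whereas you organize it as ``separate deterministic bulk from random remainder and reassemble''; these are the same computations in a different order.

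One factual slip in your justification of the \(\mbar_i \to \mpr\) step: the \(2^{L}\) subtrees at level \(L\) of a complete-but-not-full tree do \emph{not} have sizes differing by at most one — they range from \(2^{\mpr}-1\) to \(2^{\mpr+1}-1\), i.e., they can differ by roughly a factor of two. What actually makes the unification work (and what the paper's parenthetical remark after \eqref{eq:phi:ni} is pointing at) is that the defining window \(2^{\mbar}-1\le n_i\le 2^{\mbar+1}-1\) of \autoref{lem:expectation} is satisfied by the single choice \(\mbar=\mpr\) for \emph{every} subtree at level \(L\): a full subtree of height \(\mpr\) has \(n_i = 2^{\mpr+1}-1\) (upper endpoint), a full subtree of height \(\mpr-1\) has \(n_i = 2^{\mpr}-1\) (lower endpoint), and the at most one incomplete subtree lies strictly between. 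So the step stands, but for a different reason than the one you stated.
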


\begin{proof}
    By \autoref{lem:expectation}, we have
    \begin{equation}\label{eq:phi:ni}
        \begin{aligned}
        \varphi_{r}(n_{i},y)
        =
        &
        \frac{n_i (k!)^{r/k} \mpr^{-\frac{r}{k}} \gamma \left(\frac{r}{k},\frac{\mpr Y_{i}^k}{k!}\right)}{k r!}
        \\
        &
        +
        \frac{n_i (k!)^{r/k} \mpr^{-\frac{k+r}{k}} \gamma \left(\frac{k+r}{k},\frac{\mpr Y_{i}^k}{k!}\right)}{k r!}
        +
        \frac{2^{\mpr+1} (k!)^{r/k} \mpr^{-\frac{k+r}{k}} \gamma \left(\frac{k+r}{k},\frac{m Y_{i}^k}{k!}\right)}{k r!}
        \\
        &
        +
        \sum _{j=1}^k \sum _{b=j (k+1)}^{(j+1) k} 
        \frac{(k!)^{\frac{b+r}{k}}}{k r!}n_i C_{6}(j,b) \mpr^{j-\frac{b+r}{k}} \gamma \left(\frac{b+r}{k},\frac{\mpr Y_{i}^k}{k!}\right)
        \\
        &
        + 
        \sum _{i=1}^k \frac{(-1)^i n_i (k!)^{\frac{i+r}{k}} \mpr^{-\frac{i+r}{k}} \gamma \left(\frac{i+r}{k},\frac{\mpr Y_{i}^k}{k!}\right)}{k i! r!}
        +
        \Op{n_i \mpr^{-\frac{k+r+1}{k}}}
        ,
        \end{aligned}
    \end{equation}
    where \(\mpr=m-L=m-\bigO{\log m}\). (This is why we need to formulate \autoref{lem:expectation} in
    terms of \(\mbar\)--here \(\mpr\) is either the height of subtree rooted at \(v_{i}\), or it is
    the height of the subtree plus one and the subtree is full.)

    We now convert this into an expression in \(m\).
    Let
    \begin{equation}\label{eq:phi:ni:1}
        \begin{aligned}
        x_{i}
        =
        &
        \frac{n_i (k!)^{r/k} m^{-\frac{r}{k}} \Gamma \left(\frac{r}{k}\right)}{k r!}
        -
        \frac{n_i (k!)^{r/k} m^{-\frac{r}{k}} \Gamma \left(\frac{r}{k},\frac{m Y_{i}^k}{k!}\right)}{k r!}
        \\
        &
        +
        \frac{n_i (k!)^{r/k} m^{-\frac{k+r}{k}} \Gamma \left(\frac{k+r}{k}\right)}{k
            r!}
        +\frac{2^{\mpr+1} (k!)^{r/k} m^{-\frac{k+r}{k}} \Gamma \left(\frac{k+r}{k}\right)}{k r!}
        \\
        &
        +
        \sum _{j=1}^k \sum _{b=j (k+1)}^{(j+1) k} 
        \frac{(k!)^{\frac{b+r}{k}}}{k r!}n_i C_{6}(j,b) m^{j-\frac{b+r}{k}} \Gamma
            \left(\frac{b+r}{k}\right)
        \\
        &
        + 
        \sum _{i=1}^k \frac{(-1)^i n_i (k!)^{\frac{i+r}{k}} m^{-\frac{i+r}{k}} \Gamma
            \left(\frac{i+r}{k}\right)}{k i! r!}
        .
        \end{aligned}
    \end{equation}
    Then using the identity \(\gamma(a,z)=\Gamma(a)-\Gamma(a,z)\),
    \begin{align*}
        \varphi_{i}(n_{i},y)-x_{i}
        =
        &
        \frac{n_i (k!)^{r/k} 
            \left( 
                \mpr^{-\frac{r}{k}}
                -
                m^{-\frac{r}{k}}
            \right)
            \Gamma \left(\frac{r}{k}\right)}{k r!}
        \\
        &
        +
        \frac{n_i (k!)^{r/k} \left(m^{-\frac{r}{k}} \Gamma \left(\frac{r}{k},\frac{m Y_{i}^k}{k!}\right) 
                -\mpr^{-\frac{r}{k}} \Gamma \left(\frac{r}{k},\frac{\mpr Y_{i}^k}{k!}\right)
            \right) }{k r!}
        \\
        &
        +
        \frac{n_i (k!)^{r/k} 
            \left( 
                \mpr^{-\frac{k+r}{k}}
            -
            m^{-\frac{k+r}{k}}
            \right) \Gamma \left(\frac{k+r}{k}\right)
        }{k
            r!}
        \\
        &
        +
        \frac{n_i (k!)^{r/k} \mpr^{-\frac{k+r}{k}} \Gamma \left(\frac{k+r}{k},\frac{\mpr Y_{i}^k}{k!}\right)}{k r!}
        \\
        &
        +
        \frac{2^{\mpr+1} (k!)^{r/k} 
            \left( 
            \mpr^{-\frac{k+r}{k}} 
            -
            m^{-\frac{k+r}{k}} 
            \right)
            \Gamma \left(\frac{k+r}{k}\right)}{k r!}
        \\
        &
        +
        \frac{2^{\mpr+1} (k!)^{r/k} \mpr^{-\frac{k+r}{k}} \Gamma \left(\frac{k+r}{k},\frac{\mpr
                    Y_{i}^k}{k!}\right)}{k r!}\stepcounter{equation}\tag{\theequation}\label{eq:phi:ni:2}\\
        &
        +
        \sum _{j=1}^k \sum _{b=j (k+1)}^{(j+1) k} 
        \frac{(k!)^{\frac{b+r}{k}}}{k r!}n_i C_{6}(j,b) 
            \left( 
                \mpr^{j-\frac{b+r}{k}} 
                -
                m^{j-\frac{b+r}{k}} 
            \right)
            \Gamma
            \left(\frac{b+r}{k}\right)
        \\
        &
        +
        \sum _{j=1}^k \sum _{b=j (k+1)}^{(j+1) k} 
        \frac{(k!)^{\frac{b+r}{k}}}{k r!}n_i C_{6}(j,b) \mpr^{j-\frac{b+r}{k}} \Gamma \left(\frac{b+r}{k},\frac{\mpr Y_{i}^k}{k!}\right)
        \\
        &
        + 
        \sum _{i=1}^k \frac{(-1)^i n_i (k!)^{\frac{i+r}{k}} 
            \left( 
                \mpr^{-\frac{i+r}{k}} 
                -
            m^{-\frac{i+r}{k}} 
            \right)
            \Gamma
            \left(\frac{i+r}{k}\right)}{k i! r!}
        \\
        &
        + 
        \sum _{i=1}^k \frac{(-1)^i n_i (k!)^{\frac{i+r}{k}} \mpr^{-\frac{i+r}{k}} \Gamma \left(\frac{i+r}{k},\frac{\mpr Y_{i}^k}{k!}\right)}{k i! r!}
        +
        \Op{n_i m^{-\frac{k+r+1}{k}}}
        .
    \end{align*}

    The first term of the above expression is
    \begin{equation}\label{eq:phi:ni:3}
        \begin{aligned}
        \frac{
            n_i (k!)^{r/k} 
            \left( 
                \mpr^{-\frac{r}{k}}
                -
                m^{-\frac{r}{k}}
            \right)
            \Gamma
            \left(\frac{r}{k}\right)
        }{
            k
            r!
        }
        =
        \frac{
            r 
            (k!)^{r/k} 
            \Gamma \left(\frac{r}{k}\right)
            n_i m^{-\frac{r}{k}-1} L
        }{
            k^{2} r!
        }
        +
        \bigO{
            \frac{n_{i}L^{2}}{m^{\frac{r}{k}+2}}
        }
        ,
        \end{aligned}
    \end{equation}
    since
    \(\mpr^{-a} -m^{a} = a L m^{-a-1}+ \bigO{L^{2} m^{-a-2}}\). 
    The terms which do not contain \(Y_{i}\) can be bounded similarly.
    For terms involving \(Y_{i}\), we can use \autoref{lem:Y1:bound}.
    For example, by \eqref{eq:Y1:diff}, the second term is
    \begin{equation}\label{eq:phi:ni:4}
        \frac{n_i (k!)^{r/k} \left(m^{-\frac{r}{k}} \Gamma \left(\frac{r}{k},\frac{m Y_{i}^k}{k!}\right) 
                -\mpr^{-\frac{r}{k}} \Gamma \left(\frac{r}{k},\frac{\mpr Y_{i}^k}{k!}\right)
            \right) }{k r!}
        =
        \Op{
            n_{i} m^{-\frac{r}{k}-2} L^{2}
        }
        .
    \end{equation}
    In the end, it follows from \autoref{lem:Y1:bound} and simple asymptotic computations that
    \begin{equation}\label{eq:xi:diff}
        \phi_{r}(n_{i},y)-x_{i}
        =
        \frac{
            r 
            (k!)^{r/k} 
            \Gamma \left(\frac{r}{k}\right)
            n_i m^{-r/k-1} L
        }{
            k^{2} r!
        }
        +
        \Op{
            L^{2} n_{i} m^{-\frac{r+1}{k}-1}
        }
        .
    \end{equation}
    Since \(\sum_{i=1}^{2^{L}}n_{i} = n - (2^{L}-1)=n - \bigO{m^{2-\frac{1}{2 k}}}\),
    \begin{equation}\label{eq:xi:diff:sum}
        \sum_{i}^{2^{L}}
        \left( 
            \phi_{r}(n_{i},y)-x_{i}
        \right)
        =
        \frac{
            r 
            (k!)^{r/k} 
            \Gamma \left(\frac{r}{k}\right)
            n m^{-r/k-1} L
        }{
            k^{2} r!
        }
        +
        \Op{
            L^{2} n m^{-\frac{r+1}{k}-1}
        }
        .
    \end{equation}
    Thus by \eqref{eq:sum:psi}, we have
    \begin{equation}\label{eq:sum:psi:m}
        \begin{aligned}
        \recnR{r}
        =
        &
        \sum _{i=1}^{2^L} \varphi_{r}(n_{i},y)
        +
        O_{p}\left(n m^{-1-\frac{1}{4 k} - \frac{r}{k}}\right)
        \\
        =
        &
        \sum_{i}^{2^{L}} x_{i}
        +
        \frac{
            r 
            (k!)^{r/k} 
            \Gamma \left(\frac{r}{k}\right)
            n m^{-r/k-1} L
        }{
            k^{2} r!
        }
        +
        O_{p}\left(
            n m^{-1-\frac{1}{4 k} - \frac{r}{k}}
        \right)
        ,
        \end{aligned}
    \end{equation}
    from which \eqref{eq:sum:Yi} follows immediately.
\end{proof}

\begin{lemma}\label{lem:triangular}
    Let \(n_{v}\) be the size of the subtree rooted at the node \(v\). Then
    \begin{equation}\label{eq:sum:tri}
        \begin{aligned}
        \recnR{r}
        =
        &
        \psibar_{r}(n,m,\infty)
        +
        \frac{r (k!)^{r/k} \Gamma \left(\frac{r}{k}\right) }{k^2 r!}
        n m^{-\frac{k+r}{k}} L
        \\
        &
        -
        \sum_{v:h(v)\le L}
        \frac{n_{v} }{k r!} 
        \left(\frac{m}{k!}\right)^{-\frac{r}{k}} 
        \Gamma \left(\frac{r}{k},\frac{m \clockRV{k}{v} ^k}{k!}\right)
        +
        O_{p}\left(
            n m^{-1-\frac{1}{4 k} - \frac{r}{k}}
        \right)
            .
        \end{aligned}
    \end{equation}
\end{lemma}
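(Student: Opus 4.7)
My approach is to derive \autoref{lem:triangular} directly from \autoref{lem:mbar}. Comparing the two statements, the only structural difference lies in the last sum: \autoref{lem:mbar} sums over the $2^L$ leaves at height $L$ using the path minima $Y_i$, while \autoref{lem:triangular} sums over \emph{all} nodes of height at most $L$ using the individual clocks $\clockRV{k}{v}$. It therefore suffices to show that the two representations agree up to an error of $\Op{nm^{-1-1/(4k)-r/k}}$ (after the prefactor $\frac{1}{kr!}(m/k!)^{-r/k}$).

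Set $f(x) \eqd \Gamma\!\left(\frac{r}{k}, \frac{mx^k}{k!}\right)$, which is nonnegative and decreasing; since $Y_i = \min_{u \preceq v_i} \clockRV{k}{u}$, we have $f(Y_i) = \max_{u \preceq v_i} f(\clockRV{k}{u})$. First I would approximate this maximum by the whole sum, writing $f(Y_i) = \sum_{u \preceq v_i} f(\clockRV{k}{u}) - R_i$ with $R_i \ge 0$. Next, I would swap the order of summation,
\begin{equation*}
\sum_{i=1}^{2^L} n_i \sum_{u \preceq v_i} f(\clockRV{k}{u})
=
\sum_{u:\, h(u) \le L} f(\clockRV{k}{u}) \, \tilde n_u,
\qquad
\tilde n_u \eqd \sum_{i:\, u \preceq v_i} n_i,
\end{equation*}
and check combinatorially that $n_u - \tilde n_u$ counts exactly the nodes in the subtree of $u$ at heights strictly below $L$, so $n_u - \tilde n_u \le 2^{L-h(u)+1}$.

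Two error sources then need to be absorbed into the claimed $\Op{\cdot}$ bound. The first, $\sum_u f(\clockRV{k}{u})(n_u - \tilde n_u)$, has expectation at most $\E{f(T)} \cdot \sum_{h=0}^L 2^h \cdot 2^{L-h+1} = \bigO{L\, 2^L/m}$, using the elementary estimate $\E{f(T)} = \bigO{1/m}$ for $T \sim \Gam(k,1)$ (obtained just as in the proof of \eqref{eq:Y1:U3}). Since $2^L = \bigO{m^{2-1/(2k)}}$ and $n \ge 2^m$, after applying the prefactor this sits comfortably below the target error.

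The main obstacle will be bounding $\sum_i n_i R_i$, because $R_i$ is a sum-minus-max that is typically zero but occasionally of order one. The key observation is that $f(\clockRV{k}{u})$ is non-negligible only when $\clockRV{k}{u} = \bigO{m^{-1/k}}$, which occurs with probability $\bigO{1/m}$ per node; hence on each root-to-$v_i$ path the event that two clocks simultaneously land in the critical regime has probability $\bigO{L^2/m^2}$. I plan to make this quantitative via
\begin{equation*}
R_i \le \sum_{u \ne u' \preceq v_i} f(\clockRV{k}{u}) \, \ind{f(\clockRV{k}{u}) \le f(\clockRV{k}{u'})},
\end{equation*}
and then use independence of the clocks together with $\E{\min(f(T_1), f(T_2))} = \bigO{1/m^2}$, which follows from the identity $\E{\min(X,Y)} = \int_{0}^{\infty} \p{X>t}^{2} \, \dd t$ for iid nonnegative $X,Y$, combined with the trivial bound $\p{T<y} \le y^k/k!$ for $T \sim \Gam(k,1)$ and a change of variable $t = \Gamma(r/k, z)$. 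This yields $\E{R_i} = \bigO{L^2/m^2}$, hence $\E{\sum_i n_i R_i} = \bigO{nL^2/m^2}$, which is $o(nm^{-1-1/(4k)-r/k})$ after rescaling because $L^2 \ll m^{1-1/(4k)}$. A final application of Markov's inequality then delivers the required $\Op{\cdot}$ bound, completing the proof of \eqref{eq:sum:tri}.
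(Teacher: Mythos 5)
Your reduction from \autoref{lem:mbar} is correct and, up to the final reconciliation, mirrors the paper: both proofs replace the path-minima $Y_i$ by per-node clocks, swap the order of summation, and then control two discrepancies. The difference lies entirely in how the ``sum minus max'' remainder $R_i = \sum_{u \preceq v_i} f(\clockRV{k}{u}) - f(Y_i)$ is handled. The paper defines the threshold $a = (2k!\log m/m)^{1/k}$, shows that with probability $1-o(1)$ at most one clock on each of the $2^L$ paths falls below $a$, and on this good event bounds $R_i \le L\,\Gamma(r/k, ma^k/k!) = \bigO{Lm^{-2}}$ deterministically via \eqref{eq:gam:ieq}. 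You instead bound $\E{R_i}$ directly by pairwise comparison, $R_i \le \sum_{u \ne u'} f(\clockRV{k}{u})\ind{f(\clockRV{k}{u}) \le f(\clockRV{k}{u'})} \le \sum_{u\ne u'}\min(f(\clockRV{k}{u}), f(\clockRV{k}{u'}))$, and compute $\E{\min(f(T_1),f(T_2))} = \bigO{m^{-2}}$ via the layer-cake identity and the change of variable $t = \Gamma(r/k,z)$ (which produces the finite integral $\Gamma(r/k+2)$); Markov then converts the $\bigO{nL^2/m^2}$ mean bound into the required $\Op{\cdot}$. Your route yields $L^2/m^2$ rather than the paper's $L/m^2$, but both are far inside the claimed $\Op{nm^{-1-1/(4k)-r/k}}$, so the slack is immaterial. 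A side note in your favor: your treatment of the overcount $n_u - \tilde n_u$ with the per-node bound $2^{L-h(u)+1}$ and the resulting $\bigO{L2^L/m}$ estimate is somewhat more explicit than the paper's terse ``$n_v - 2^L \le \sum_{i:v\in P(v_i)} n_i \le n$.'' The moment-plus-Markov argument you propose is a clean alternative to the good-event construction; the paper's version has the modest advantage of giving a uniform bound on the event rather than an in-expectation one, but for the purposes of this lemma they are interchangeable.
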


\begin{proof}
    Recall that \(Y_{i}\) is the minimum of \(L+1\) independent \(\Gam(k,1)\) random variables
    \(\mySeqBig{\clockRV{k}{v}}{v \in P(v_{i})}\), where \(P(v_{i})\) denotes the path from the root
    \(o\) to \(v_{i}\).
    Let \(a=\left({2 k! \log (m)}/{m}\right)^{1/k}\).
    The probability that at least two
    \(\clockRV{k}{v}\) are less than \(a\) is
    \begin{equation}\label{eq:a}
        \begin{aligned}
        &
        1-\p{\Gam(k,1)>a}^{L+1}-L \p{\Gam(k,1)>a}^{L}\p{\Gam(k,1) \le a}
        \\
        &
        =
        1-Q(k,x)^{L+1}-L Q(k,x)^{L}(1-Q(k,x))
        \\
        &
        = \bigO{a^{2 k}L^{2}}
        = \bigO{\log(m)^{2} m^{-2} L^{2}}
        ,
        \end{aligned}
    \end{equation}
    where we use the approximation of \(Q(k,x)^{L}\) in \eqref{eq:fmexpand} and
    the series expansion of \(Q(k,x)\) in \cite[8.7.3]{DLMF}. Thus the probability that this happens
    for some \(i\) is \(\bigO{2^{L} \log(m)^{2} m^{-2} L^{2}}=o(1)\).

    With probability goes to \(1\), there is at most one \(\clockRV{k}{v}\) that is less than
    \(a\) on each path \(P(v_{i})\). When this happens, by the inequality \eqref{eq:gam:ieq},
    \begin{equation}\label{eq:Y:T}
        0 
        \le 
        \sum_{v \in P(v_{i})} 
        \Gamma\left( \frac{r}{k}, \frac{m \clockRV{k}{v}^{k}}{k!} \right)
        -
        \Gamma\left( \frac{r}{k}, \frac{m Y_{i}^{k}}{k!} \right)
        \le
        L 
        \Gamma\left( \frac{r}{k}, \frac{m a^{k}}{k!} \right)
        =
        \bigO{ m^{-2} L }
        .
    \end{equation}
    Therefore,
    \begin{equation}\label{eq:Y:T:1}
        \begin{aligned}
            \sum_{i=1}^{2^{L}}
            n_{i}
            \Gamma\left( \frac{r}{k}, \frac{m Y_{i}^{k}}{k!} \right)
            &
            =
            \sum_{i=1}^{2^{L}}
                n_{i}
                \sum_{v \in P(v_{i})} 
                \Gamma\left( \frac{r}{k}, \frac{m \clockRV{k}{v}^{k}}{k!} \right)
            +
            \bigO{n m^{-2} L }
            \\
            &
            =
            \sum_{h(v)\le L} 
            \Gamma\left( \frac{r}{k}, \frac{m \clockRV{k}{v}^{k}}{k!} \right)
                \sum_{i:v \in P(v_{i})}
                n_{i}
            +
            \bigO{n m^{-2} L }
            \\
            &
            =
            \sum_{h(v)\le L} 
            \Gamma\left( \frac{r}{k}, \frac{m \clockRV{k}{v}^{k}}{k!} \right)
            n_{v}
            +
            \bigO{n m^{-2} L }
            ,
        \end{aligned}
    \end{equation}
    where in the last step we use \(n_{v}-2^{L} \le \sum_{i:v \in P(v_{i})} n_{i} \le n\).
    Thus
    \begin{equation}\label{eq:Y:T:2}
        \sum_{i=1}^{2^{L}}
        \frac{n_{i} \left(\frac{m}{k!}\right)^{-\frac{r}{k}} }{k r!} 
        \Gamma \left(\frac{r}{k},\frac{m Y_{i} ^k}{k!}\right)
        =
        \sum_{h(v)\le L} 
        \frac{n_{v} \left(\frac{m}{k!}\right)^{-\frac{r}{k}} }{k r!} 
        \Gamma\left( \frac{r}{k}, \frac{m \clockRV{k}{v}^{k}}{k!} \right)
        +
        \bigO{
            n m^{-\frac{r}{k}-2} L
        }
        .
    \end{equation}
    The lemma follows by putting this into \eqref{eq:sum:Yi}.
\end{proof}

\section{Convergence of the triangular array}

By taking subsequences, we can assume that as \(n \to \infty\), \(\alpha_{n}\eqd\fracPart{\lg n}  \to \alpha\) and
\(\beta_{n}\eqd\fracPart{\lg \lg n} \to \beta\).
Thus \(\lg n = m + \alpha + o(1)\), \(\lg m = \lg \lg n + o(1)=l+\beta+o(1)\), where \(l \eqd
    \floor{\lg \lg n}\). Moreover,
\(\lg n - \lg \lg n = m - l + \alpha-\beta+o(1)\) and
\begin{equation}\label{eq:gamma}
    \fracPart{\lg n - \lg \lg n}
    \to
    \gamma =
    \begin{cases}
        \alpha - \beta & \text{if $\alpha > \beta$}, \\
        \alpha - \beta + 1 & \text{if $\alpha < \beta$}, \\
        \text{$0$ or $1$} & \text{if $\alpha = \beta$},
    \end{cases}
\end{equation}
which implies \(\gamma \equiv \alpha-\beta \pmod{1}\). 

\begin{lemma}\label{lem:convergence}
    Let \(h \eqd 2^{\beta -\alpha } \Gamma \left(\frac{r}{k}\right)\).
    Assume that \(\alpha_{n} \to \alpha\) and \(\beta_{n} \to \beta\).
    Then as \(n \to \infty\):
    \begin{enumerate}[(i)]
        \item\label{lem:con:null} 
            For all fixed \(x > 0\),
            \(
                \sup_{v} \p{\xi_{r,v}>x}\to 0
                .
            \)
        \item\label{lem:con:nu} 
            For all fixed \(x > 0\),
            \(
                \sum_{v:h(v)\le L} \p{\xi_{r,v}>x}\to \nurab(x,\infty),
            \)
            where \(\nurab\) is defined in \eqref{eq:nurab}.
        \item\label{lem:con:mean} 
            We have
            \begin{equation}\label{eq:con:nu}
                \begin{aligned}
                    &
                    \sum_{v:h(v)\le L} \E{\xi_{r,v} \mathbb 1 [\xi_{r,v}\le h]} 
                    -
                    \Gamma\left( 1+\frac{r}{k} \right)
                    \left( 
                        2^{1-\alpha }+\alpha -\beta -\ell + L
                    \right)
                    \\
                    &
                    \qquad
                    \to 
                    f_{r,k,\gamma}
                    -\int_{h}^{1} x 
                    \, \dd \nurab(x)
                    ,
                \end{aligned}
            \end{equation}
            where \(f_{r,k,\gamma}\) is a constant defined later in \eqref{eq:f:gamma}.
        \item\label{lem:con:var} 
            We have
            \begin{equation}\label{eq:con:sig}
            \sum_{v:h(v)\le L} \V{\xi_{r,v} \mathbb 1 [\xi_{r,v}\le h]} \to
                \int_{0}^{h} x^2 
                \, \dd \nurab(x)
                .
            \end{equation}
    \end{enumerate}
\end{lemma}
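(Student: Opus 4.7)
The plan is to reduce all four parts to explicit tail computations for a single \(\Gam(k,1)\) variable, then aggregate across heights \(i \le L\) and identify the resulting Riemann-type sum as the claimed periodic limit. Since \(\clockRV{k}{v}\) is the only source of randomness in \(\xi_{r,v}\) and \(y \mapsto \Gamma(r/k,y)\) is strictly decreasing, inverting gives
\begin{equation*}
    \p{\xi_{r,v} > x} = \p{\clockRV{k}{v}^{k} < k!\, y_{i,v}/m},
    \qquad
    y_{i,v} \eqd Q^{-1}\!\left(\tfrac{r}{k},\, \tfrac{x n}{m n_v \Gamma(r/k)}\right).
\end{equation*}
For small \(y_{i,v}/m\), the density of \(\Gam(k,1)\) near the origin gives \(\p{\xi_{r,v}>x} = (y_{i,v}/m)(1+o(1))\). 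Part (i) then follows at once: since \(n_v \le n\) and \(Q^{-1}(a,u) \sim \log(1/u)\) as \(u\downarrow 0\), we have \(y_{i,v} = \bigO{\log m}\), hence \(\sup_v \p{\xi_{r,v}>x} = \bigO{(\log m)/m} \to 0\).

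For part (ii), I would split the sum by height \(i\) and by subtree type: in a complete binary tree each \(n_v\) equals \(2^{m-i+1}-1\) or \(2^{m-i}-1\) up to one exceptional partial subtree per level, the counts of each being controlled by \(\alpha_n\). Thus \(\sum_{h(v)=i}\p{\xi_{r,v}>x}\) becomes two terms of the form \(N_i \cdot y/m\) whose \(Q^{-1}\)-arguments depend on \(i\) only through \(2^i/m\). Using \(\lg n = m+\alpha_n\), \(\lg m = \ell+\beta_n+o(1)\), and \(\gamma \equiv \alpha - \beta \pmod 1\), the substitution \(s = \ell - i + \beta_n\) converts the sum over \(i \le L\) into a sum over \(s \ge 1\) and produces the periodic fractional part \(\fracPart{\gamma+\lg(x/\Gamma(r/k))}\); matching the \(4^{\fracPart{\cdot}-s}\) factor (from the \(2^i n_v/n\) weight) together with \(\exp(Q^{-1})(Q^{-1})^{1-r/k}\) (from the derivative formula \eqref{eq:Gam:D}) against \eqref{eq:nurab} identifies the limit as \(\nurab(x,\infty)\). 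Levels with \(i\) much smaller than \(\ell\) contribute nothing because the \(Q^{-1}\)-argument then exceeds \(1\).

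For parts (iii) and (iv), I would use the identity
\begin{equation*}
    \E{\xi_{r,v}\,\mathbb 1[\xi_{r,v}\le h]} = \int_0^h \p{\xi_{r,v}>y}\,\dd y - h\,\p{\xi_{r,v}>h},
\end{equation*}
together with its analogue carrying a factor \(2y\) for the second moment, reducing both to integrals of the tails from part (ii). Dominated convergence under the uniform tail bound gives (iv) directly. For (iii), the untruncated expectation \(\sum_v \E{\xi_{r,v}}\) grows linearly in the height range, producing the divergent subtraction \(\Gamma(1+r/k)(2^{1-\alpha}+\alpha-\beta-\ell+L)\) on the left-hand side; the remaining bounded periodic residual, together with \(-\int_h^1 x\,\dd\nurab(x)\) from restoring the truncation at \(h\), I would take as the definition of \(f_{r,k,\gamma}\) matching \eqref{eq:f:gamma}.

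The main obstacle is part (ii): the careful bookkeeping of the two subtree sizes at each level combined with uniform asymptotics of \(Q^{-1}(a,u)\) across the whole interval \(u\in(0,1)\). Levels near \(i = L\) are delicate because there the \(Q^{-1}\)-argument is of order \(1\) rather than small, so the linearization \(\p{\Gam(k,1)<t}\sim t^k/k!\) breaks down; these contributions are exactly what the \(\exp(Q^{-1})\) and \((Q^{-1})^{1-r/k}\) factors in \eqref{eq:nurab} encode, forcing one to keep the full \(\Gam(k,1)\) density rather than just its leading behaviour at \(0\), and it is here that the explicit periodic dependence on \(\gamma\) crystallizes.
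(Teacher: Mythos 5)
Your reduction of all four statements to the single tail computation
\(\p{\xi_{r,v}>x}=\p{\clockRV{k}{v}^{k}<k!\,y_{i,v}/m}\) with
\(y_{i,v}=Q^{-1}(r/k,\,xn/(mn_v\Gamma(r/k)))\) is correct, and part (i) is
essentially the paper's argument. Part (ii) has the right structure — classify
nodes by subtree size, reduce to two geometric families per level, reindex so
that the summand depends only on a fractional part — but you rightly flag it as
``the main obstacle'' and then stop short. The paper resolves it by importing
Janson's good/bad-node count \eqref{eq:good:bad} verbatim (bad nodes number
\(\bigO{m^{1/2}}\) and are discarded via \eqref{eq:prob:k}), so the bookkeeping
you worry about is already done; the remaining work is just the reindexing
\(i=l-t\) and continuity of \(Q^{-1}\).

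The genuine gap is in part (iii). Your identity
\(\E{\xi_{r,v}\,\mathbb 1[\xi_{r,v}\le h]}=\int_0^h\p{\xi_{r,v}>y}\,\dd y
-h\,\p{\xi_{r,v}>h}\) is correct, but after summing over \(v\) and trying to
pass to the limit you face
\(\int_0^h\nurab(y,\infty)\,\dd y=\infty\): the density
\eqref{eq:nurab} behaves like \(cx^{-2}\) near zero, so
\(\nurab(y,\infty)\sim c/y\). Dominated convergence cannot apply, and indeed
the left side of \eqref{eq:con:nu} diverges like \(\Gamma(1+r/k)(L-\ell)\) — the
entire content of (iii) is extracting that linear term from the finite-\(n\)
integral and checking that the bounded residual equals
\(f_{r,k,\gamma}-\int_h^1 x\,\dd\nurab(x)\) with \(f_{r,k,\gamma}\) given
\emph{explicitly} by \eqref{eq:f:gamma}. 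Saying you would ``take the residual
as the definition of \(f_{r,k,\gamma}\)'' inverts the burden: the constant is
fixed, and the match must be verified. The paper does this by splitting good
nodes into low (\(l/2\le t\le l\)), high (\(l<t<L\)) and last (\(t=L\)) ranges,
finding the exact antiderivative \(\hat U_{r,t}\) of \(x\hat u_{r,t}(x)\) via
\eqref{eq:Gam:D}, and evaluating each range; the linear-in-\(L\) piece comes
out of the high part \eqref{eq:mu:high}.

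Part (iv) has a softer version of the same problem. The bound from (i),
\(\p{\xi_{r,v}>y}=\bigO{m^{-1}\log_+(m/y)}\), is per-node; summed over the
\(\thetaf{m}\) nodes that can actually be exceeded at level \(\lesssim\ell\), it
gives only \(\bigO{\log_+(m/y)}\), which is not integrable against \(2y\) near
\(0\) in a way that dominates uniformly in \(n\) without further argument. The
integral does converge in the limit because \(x^2\,\dd\nurab\) is finite near
\(0\), but you still need a dominating function valid for finite \(n\). The
paper again sidesteps this with the explicit antiderivative \(\widetilde
U_{r,t}\) in \eqref{eq:int:U:2} and the low/high/last split. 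If you want to
keep the tail-integral route, you should produce an explicit, \(n\)-uniform
bound on \(\sum_v\p{\xi_{r,v}>y}\) of the form \(\bigO{y^{-1}}\) near \(0\)
(which is what the finite-\(n\) sum actually satisfies, being a Riemann sum for
\(\nurab(y,\infty)\)) rather than relying on the per-node bound from (i).
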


Before getting into the somewhat complicated proof of \autoref{lem:convergence}, we first show why
\autoref{thm:1:rec} and \autoref{thm:mean} follow from it.

Let \(\xi'_{i}\eqd \Gamma\left( 1+\frac{r}{k} \right) \left( 2^{1-\alpha }+\alpha -\beta -\ell + L
    \right)/n\), which are deterministic.  It follows from \autoref{lem:convergence} that we can
apply Theorem 15.28 in \cite{kallenberg02} with \(a=0\), \(b=f_{r,k,\gamma}\) to show that the
triangular array \(\sum_{h(v)\le L} \xirv + \sum_{i = 1}^{n} \xi_{i}'\) converges in distribution to
\(\Wrab\) (defined in \autoref{thm:1:rec}). Thus by \autoref{lem:rescale}, \autoref{thm:1:rec}
follows immediately.

For \autoref{thm:mean}, note that the right-hand-side of \eqref{eq:rec:r:dist} equals
\begin{equation}\label{eq:rec:r:dist:1}
    \begin{aligned}
        &
        \rescale{1}
        \left(
            \recnall
            -
            \sum_{r=1}^{k}
            \rescaleInv{r}
            \mu_{r,n}
        \right)
        \\
        &
        =
        \left(
            \rescale{1}
            \recni
            -
            \mu_{1,n}
        \right)
        +
        \sum_{r=2}^{k}
        \frac{
            C_{2}(r)
        }{
            C_{2}(1)
            \lg(n)^{\frac{r-1}{k}}
        }
        \left( 
        \rescale{r}
        \recnr
        -
        \mu_{r,n}
        \right)
        \\
        &
        =
        \left(
            \rescale{1}
            \recni
            -
            \mu_{1,n}
        \right)
        +
        o_{p}(1)
        \inlaw 1- C_{3}(1) W_{1,k,\gamma }
        ,
    \end{aligned}
\end{equation}
where the last two steps follow from \autoref{thm:1:rec}.

It remains to show that the result for \(\recnall\) holds for \(\recnalle\).  By identifying cutting an
edge with cutting its lower (closer to root) endpoint node, we see that \(\recnalle\) is distributed
as \(\recnall-k\) conditioned on \(\clockRV{k}{o}=\infty\).  Thus to apply the same argument for
\(\recnall\) to \(\recnalle\), we only need to make \(Y_{i}\) the minimum of \(L\), instead of
\(L+1\), independent \(\Gam(k,1)\) random variables, and to exclude \(\xi_{r,o}\), i.e., to exclude
the root, in the sum of the triangular array in \eqref{eq:rescale}.  This minor change certainly
does not matter.  See also the argument for \(k=1\) at the end of \cite{janson04}.

\subsection{Proof of {\autoref{lem:convergence}} {(\ref{lem:con:null})}}

Recall that in \eqref{eq:xi:r:v} we define
\begin{equation}\label{eq:xi:1:v}
    \xi_{r,v}
    \eqd
    \frac{m n_{v}}{n}\Gamma \left(\frac{r}{k},\frac{m \clockkv^k}{k!}\right)
    ,
\end{equation}
where \(\mySeqBig{\clockkv}{v \in \Tbinn}\) are \iid \(\Gam(k,1)\) random variables.
Thus \(\p{\clockRV{k}{v}>x}=Q(k,x)\), where \(Q(k,x) \eqd \Gamma(k,x)/\Gamma(k)\), see
    \eqref{eq:gam:density}.
Assume for now that \(\frac{n x}{\Gamma \left({1}/{k}\right) m n_{v}} \le 1\).
Then, for all fixed \(x > 0\).
\begin{equation}\label{eq:xi:null}
    \begin{aligned}
    \p{\xirv > x}
    &
    =
    \p{
        \Gamma \left(\frac{r}{k},\frac{m \clockkv^k}{k!}\right)>\frac{n x}{m n_{v}}
    }
    \\
    &
    =
    \p{
        \clockkv
        \le
        \left(\frac{k! }{m}Q^{-1}\left(\frac{r}{k},\frac{n x}{\Gamma \left(\frac{r}{k}\right) m
                    n_{v}}\right)\right)^{\frac{1}{k}}
    }
    \\
    &
    =
    1-Q\left(k,\left(\frac{k!}{m} Q^{-1}\left(\frac{r}{k},\frac{n x}{\Gamma \left(\frac{r}{k}\right)
                    m  n_{v}}\right)\right)^{1/k}\right)
    .
    \end{aligned}
\end{equation}

The function \(Q^{-1}(a,z)\) is only defined for \(z \in (0,1]\).  However, we can extend its domain to
\( (0,\infty)\) by letting \(Q^{-1}(a,z)=0\) for \(z > 1\).  This extension makes \eqref{eq:xi:null}
also valid for \(\frac{n x}{\Gamma \left({1}/{k}\right) m n_{v}} > 1\), since in this case every
expression in \eqref{eq:xi:null} equals \(0\).

By \cite[8.10.11]{DLMF}, for \(z \ge 0\),
\begin{equation}\label{eq:ieq:gam}
    1-\left(1-\exp\left(-\Gamma\left( 1+\frac{r}{k}
            \right)^{-{k}/{r}}z\right)\right)^{{r}/{k}}
    \le
    Q\left(\frac{r}{k},z\right)
    \leq 
    1-\left(1-\ee^{-z}\right)^{r/k}
    .
\end{equation}
Letting \(y=Q\left( \frac{r}{k},z \right) \in (0,1]\), \eqref{eq:ieq:gam} implies that
\begin{equation}\label{eq:ieq:gam:1}
    Q^{-1}\left(\frac{r}{k},y\right)
    =
    z
    \leq \log_{+} \left(\frac{1}{1-(1-y)^{k/r}}\right)
    \le
    \log_{+}\left( \frac{1}{y} \right)
    ,
\end{equation}
where \(\log_{+}(z)\eqd \max\{\log(z), 0\}\).
Similarly, it follows from \eqref{eq:ieq:gam} that
\begin{equation}\label{eq:ieq:gam:2}
    Q^{-1}\left(\frac{r}{k},y\right)
    \ge
    \Gamma \left(1+\frac{r}{k}\right)^{k/r} \log \left(\frac{r}{k y}\right)
    .
\end{equation}
Note that \eqref{eq:ieq:gam:1} and \eqref{eq:ieq:gam:2} also hold for \(y > 1\) by our extension of \(Q^{-1}(a,z)\).
Thus uniformly for all \(v\) with \(h(v)\le L\),
\begin{equation}\label{eq:third:para}
    \begin{aligned}
        \frac{k!}{m} Q^{-1}\left(\frac{r}{k},\frac{n x}{\Gamma \left(\frac{r}{k}\right) m
        n_{v}}\right)
        \le
        \frac{k!}{m}
        \log_{+}\left(\frac{\Gamma \left(\frac{r}{k}\right) m n_{v}}{n x}\right)
        =
        \bigO
        {
            \frac{\log_{+} \frac{m}{x}}{m}
        }
    ,
    \end{aligned}
\end{equation}
where the last step uses that \(n_{v} \le n\).
Thus we can apply the series expansion of \(Q(k,z)\) near \(z=0\) in \cite[8.7.3]{DLMF} to
\eqref{eq:xi:null} to get
\begin{equation}\label{eq:prob:k}
    \begin{aligned}
        \p{\xirv > x} 
        &
        =
        \frac{1}{m}Q^{-1}\left(\frac{r}{k},\frac{n x}{\Gamma \left(\frac{r}{k}\right) m n_{v}}\right)
        \left( 
            1
            +
            \bigO
            {
                \frac{1}{m}Q^{-1}\left(\frac{r}{k},\frac{n x}{\Gamma \left(\frac{r}{k}\right) m n_{v}}\right)
            }^{\frac{1}{k}}
        \right)
        \\
        &
        =
        \frac{1}{m}Q^{-1}\left(\frac{r}{k},\frac{n x}{\Gamma \left(\frac{r}{k}\right) m n_{v}}\right)
        \left( 
            1
            +
            \bigO
            {
                \frac{\log_{+} \frac{m}{x}}{m}
            }^{\frac{1}{k}}
        \right)
        =
        \bigO{
            \frac{\log_{+} \frac{m}{x}}{m}
        }
        .
    \end{aligned}
\end{equation}
Therefore this probability tends to zero for all fixed \(x\).

\subsection{Proof of {\autoref{lem:convergence}} {(\ref{lem:con:nu})}}

We reuse the notion of \emph{good} and \emph{bad} nodes defined in \cite[pp.~250]{janson04}. A good
node \(v\) has \(n_v=2^{m-t}-1\) for some \(t\) with \(l/2 \le t \le L\). All other nodes with
height at most \(L\) are called \emph{bad}. \citet[eq.~20]{janson04} showed that
\begin{equation}\label{eq:good:bad}
    \# \left\{v \text{ good}:n_{v}=2^{m-t}-1 \right\}
        =
        \begin{cases}
            2^{t+\alpha_{n}} + \bigO{1}
            &
            \frac{l}{2} \le t < L,
            \\
            (2-2^{\alpha_{n}}) 2^{L} + \bigO{1}
            &
            t = L
            ,
        \end{cases}
\end{equation}
and that the number of bad nodes is \(\bigO{L+2^{l/2}}=\bigO{m^{1/2}}\).

As we have shown in \eqref{eq:prob:k} that \(\p{\xirv>x}=\bigO{m^{-1}\log_{+}\frac{m}{x}}\). By the same argument as in \cite[Eq.~21, 22]{janson04}, the bad nodes can be
ignored in the proof of (\ref{lem:con:nu}), (\ref{lem:con:mean}) and (\ref{lem:con:var}) of
\autoref{lem:convergence}.

Note that for \(t \ge L\), \(m 2^{m-t} \le 2^{l+1+m-L} < \frac{n x}{\Gamma\left( {r}/{k} \right)}\)
for \(n\) large enough, which implies \(Q^{-1}\left(\frac{1}{k},\frac{n x}{\Gamma
\left({r}/{k}\right) m n_{v}}\right)=0\) by our extension of \(Q^{-1}(a,z)\).
Thus, it follows from \eqref{eq:prob:k} and \eqref{eq:good:bad} that
    \begin{align*}\label{eq:sum:probk}
    \sum_{v \text{ good}}
    \p{\xirv > x}
    &
    \sim 
    \sum_{t \ge l/2} (2^{t+\alpha_{n}}+\bigO{1})
    \frac{1}{m}Q^{-1}\left(\frac{r}{k},\frac{n x}{\Gamma \left(\frac{r}{k}\right) m n_{v}}\right)
    +
    o(1)
    \\
    &
    =
    \sum_{t \ge l/2} 2^{t+\alpha_{n}-l-\beta_{n}}
    Q^{-1}\left(\frac{r}{k},2^{t-l+\alpha_{n}-\beta_{n}+o(1)}
        \frac{x}{\Gamma\left(\frac{r}{k}\right)}\right)
    +
    o(1)
    \\
    &
    =
    \sum_{i \le l/2} 2^{-i+\alpha-\beta}
    Q^{-1}\left(\frac{r}{k},2^{-i+\alpha-\beta+o(1)}
        \frac{x}{\Gamma\left(\frac{r}{k}\right)}\right)
    +
    o(1)\numberthis{}
    \\
    &
    \to
    F_{r}(x)
    \eqd
    \sum_{-\infty}^{\infty} 
    2^{-i+\alpha-\beta}
    Q^{-1}\left(\frac{r}{k},2^{-i+\alpha-\beta}
        \frac{x}{\Gamma\left(\frac{r}{k}\right)}\right)
    .
    \end{align*}
(By the inequality \eqref{eq:ieq:gam:1}, the function \(F_{r}(x)\) is well-defined on \(
        (0,\infty)\).)

Let \(j(x)\eqd \floor*{\lg\left( {x}/{\Gamma\left( \frac{r}{k} \right)} \right)+\alpha-\beta}\).
Then \(2^{j({x})+\beta-\alpha} \le {x}/{\Gamma\left( \frac{r}{k} \right)} <
    2^{j(x)+\beta-\alpha-1}\). In other words \( Q^{-1}\left(\frac{r}{k},2^{-i+\alpha-\beta}
    {x}/{\Gamma\left(\frac{r}{k}\right)}\right)=0 \) for \(i \le j(x)\). Thus
\begin{equation}\label{eq:F:x}
    \begin{aligned}
    F_{r}(x)
    &
    =
    \sum_{i \ge j(x)+1}
    2^{-i+\alpha-\beta}
    Q^{-1}\left(\frac{r}{k},2^{-i+\alpha-\beta}
        \frac{x}{\Gamma\left(\frac{r}{k}\right)}\right)
    \\
    &
    =
    \sum_{s \ge 1}
    2^{-s-j(x)+\alpha-\beta}
    Q^{-1}\left(\frac{r}{k},2^{-s-j(x)+\alpha-\beta}
        \frac{x}{\Gamma\left(\frac{r}{k}\right)}\right)
    \\
    &
    =
    \sum_{s \ge 1}
    x^{-1} \Gamma \left(\frac{r}{k}\right) 2^{\fracPart{\alpha -\beta +{\lg
    \left({x}/{\Gamma \left(\frac{r}{k}\right)}\right)}}-s} 
    Q^{-1}\left(\frac{r}{k},2^{\fracPart{\alpha -\beta +{\lg \left({x}/{\Gamma
    \left(\frac{r}{k}\right)}\right)}}-s}\right)
    \\
    &
    =
    \sum_{s \ge 1}
    x^{-1} \Gamma \left(\frac{r}{k}\right) 2^{\fracPart{\gamma+{\lg
    \left({x}/{\Gamma \left(\frac{r}{k}\right)}\right)}}-s} 
    Q^{-1}\left(\frac{r}{k},2^{\fracPart{\gamma+{\lg \left({x}/{\Gamma
    \left(\frac{r}{k}\right)}\right)}}-s}\right)
    ,
    \end{aligned}
\end{equation}
where the last step uses \eqref{eq:gamma}.  Note that \(F_{r}(x)\) is continuous and decreasing on \(
    (0, \infty)\), with \(F_{r}(x)\to 0\) as \(x \to \infty\).  By the derivative formula
\eqref{eq:Gam:D}, the derivative of \(F_{r}(x)\) is
\begin{equation}\label{eq:df}
    \frac{\mathrm d F_{r}(x)}{\mathrm d x}
    =
    -
    \sum_{s \ge 1}
    g_{r}(s,x)
    ,
\end{equation}
where
\begin{equation}\label{eq:df:1}
    \begin{aligned}
        g_{r}(s,x)
        \eqd
        &
        \frac{\Gamma \left(\frac{r}{k}\right)^2 }{x^2}
        4^{\fracPart{\gamma +{\lg \left({x}/{\Gamma \left(\frac{r}{k}\right)}\right)}}- s}
        \exp \left(Q^{-1}\left(\frac{r}{k},2^{\fracPart{\gamma +{\lg \left({x}/{\Gamma \left(\frac{r}{k}\right)}\right)}}-s}\right)\right)
        \\
        &
        Q^{-1}\left(\frac{r}{k},2^{\fracPart{\gamma +{\lg \left({x}/{\Gamma
                                \left(\frac{r}{k}\right)}\right)}}-s}\right)^{1-\frac{r}{k}} 
        .
    \end{aligned}
\end{equation}
Comparing with \eqref{eq:nurab}, we see that 
\begin{equation}\label{eq:d:nu}
    \frac{\dd \nurab}{\dd x}=\sum_{s \ge 1} g_{r}(s,x)
    \le
    \bigO{2^{\fracPart{\gamma+\lg(x /\Gamma\left( \frac{r}{k} \right)}}x^{-2} },
\end{equation}
and 
\(F_{r}(x)=\nu_{r,k,\gamma}(x,\infty)\), where the inequality follows from \eqref{eq:ieq:gam:1}.
Thus \autoref{lem:convergence} (\ref{lem:con:nu}) is proved.

\subsection{Proof of {\autoref{lem:convergence}} {(\ref{lem:con:mean})}}
\label{sec:con:mean}

Assume for now that \(h \eqd 2^{\beta -\alpha } \Gamma \left(\frac{r}{k}\right) < 1\).
Let \(j_{1}=\floor*{\alpha-\beta-\lg \Gamma \left(\frac{r}{k}\right)}\),
i.e., \(2^{j_{1}} h \le 1 < 2^{j_{1}+1} \).
By the upper bound of \(\dd \nurab/\dd x=\sum_{s\ge 1} g_{r}(s,x)\) in \eqref{eq:d:nu}, \(\int_{h}^{1}x \dd\nurab(x) <
    \infty\). Thus we are allowed to write this integral as
\begin{equation}\label{eq:split:nu:mean}
    \int_{h}^{1} x \dd \nu_{r,k,\gamma}\,(x)
    =
    \sum_{i=0}^{j_{1}}
    \int_{h 2^{i}}^{h 2^{i+1} \wedge 1} x 
    \,
    \dd \nu_{r,k,\gamma}(x)
    =
    \sum_{s \ge 1}
    \sum_{i=0}^{j_{1}}
    \int_{h 2^{i}}^{h 2^{i+1} \wedge 1} x 
    g_{r}(s,x)
    \,
    \dd x
    .
\end{equation}
For \(x \in (2^{i} h, 2^{i+1}h)\), by the definition of \(g_{r}(s,x)\) in \eqref{eq:df:1},
\begin{equation}\label{eq:f:g:s:x}
    \begin{aligned}
    &
    g_{r}(s,x)
    =
    \hat{g}_{r}(s,x,i)
    \eqd
    4^{ -i-s+\alpha -\beta} 
    \\
    &
    \qquad
    \exp\left\{Q^{-1}\left(\frac{r}{k},2^{-i-s+\alpha-\beta }
            \frac{x}{\Gamma \left(\frac{r}{k}\right)}\right)\right\}
    Q^{-1}\left(\frac{r}{k},2^{-i-s+\alpha-\beta }\frac{x}{\Gamma
            \left(\frac{r}{k}\right)}\right)^{1-\frac{r}{k}}
    .
    \end{aligned}
\end{equation}
Using the derivative formula \eqref{eq:Gam:D}, one can verify that
\begin{equation}\label{eq:g:diff}
    \frac{\partial}{\partial x}
    \hat{G}(s,x,i)
    =
    x
    \hat{g}_{r}(s,x,i)
    ,
\end{equation}
where
\begin{equation}\label{eq:g:hat}
    \begin{aligned}
    \hat{G}(s,x,i)
    \eqd
    &
    \Gamma \left(1+\frac{r}{k},Q^{-1}\left(\frac{r}{k},\frac{2^{-i-t+\alpha-\beta } x}{\Gamma
                \left(\frac{r}{k}\right)}\right)\right)
    \\
    &
    -x 2^{\alpha-\beta -i-t}
    Q^{-1}\left(\frac{r}{k},\frac{2^{-i-t+\alpha-\beta } x}{\Gamma \left(\frac{r}{k}\right)}\right)
    .
    \end{aligned}
\end{equation}
Therefore,
\begin{equation}\label{eq:g:int:i}
    \int_{h 2^{i}}^{h 2^{i+1}} x 
    g_{r}(s,x)
    \,
    \dd x
    =
    \int_{h 2^{i}}^{h 2^{i+1}} x 
    \hat{g}_{r}(s,x,i)
    \,
    \dd x
    =
    \hat{G}(s,h 2^{i+1},i)
    -
    \hat{G}(s,h 2^{i},i)
    .
\end{equation}
Summing \eqref{eq:g:int:i} over \(i\) and \(s\) as in \eqref{eq:split:nu:mean} and simplifying
through \cite[8.8.2]{DLMF}
\begin{equation}\label{eq:shift:gamma}
\Gamma\left(a+1,z\right)=a\Gamma\left(a,z\right)+z^{a}\ee^{-z},
\end{equation}
we have
\begin{equation}\label{eq:nu:mean:j1}
    \int_{h}^{1} x \dd \nu_{r,k,\gamma}\,(x)
    =
    \Gamma\left( 1+\frac{1}{k} \right) j_{1}
    + 
    \mu\left(\fracPart*{\gamma-\lg \Gamma \left(\frac{r}{k}\right)} \right)
    -
    \mu\left( 0 \right),
\end{equation}
where
\begin{equation}\label{eq:nu:def}
    \begin{aligned}
        \mu(x)
        \eqd
        &
        2^x \Gamma \left(1+\frac{r}{k}\right)
        +
        \sum_{s \ge 1} 
        \exp\left({-Q^{-1}\left(\frac{r}{k},2^{x-s}\right)}\right) Q^{-1}\left(\frac{r}{k},2^{x-s}\right)^{r/k}
        \\
        &
        -
        \sum_{s \ge 1} 
        2^{x-s} \Gamma \left(\frac{r}{k}\right) Q^{-1}\left(\frac{r}{k},2^{x-s}\right)
        .
    \end{aligned}
\end{equation}
By a similar argument, \eqref{eq:nu:mean:j1} also holds when \(h \ge 1\).
(When \(r=k\), \eqref{eq:nu:mean:j1} reduces to \(\lfloor \alpha -\beta \rfloor
    +2^{\fracPart{\alpha -\beta}}-1\), as in \cite{janson04}.)

We next compute \(\sum_{v \text{ good}} \E{\xi_{r,v} \mathbb 1 [\xi_{r,v}\le h]}\).
By definition, if \(v\) is good, then \(n_{v}=2^{m-t}-1\) with \(l/2 \le t \le L\).
Let \(u_{r,t}(x)\) be the probability density function of \(\xirv\).
Differentiating \eqref{eq:prob:k} shows that
uniformly for all \(t \le L\) and \(x \ge m^{-5}\),
\begin{equation}\label{eq:xi:den}
        u_{r,t}(x)
        =
        \left( 
            1+\bigO{\frac{\log m}{m}}^{\frac{1}{k}}
        \right)
        \hat{u}_{r,t}(x)
        ,
\end{equation}
where
\begin{equation}\label{eq:xi:den:1}
        \hat{u}_{r,t}(x)
        =
        \frac{n }
        {m^2 n_{v}}
        \exp\left( {Q^{-1}\left(\frac{r}{k},\frac{n x}{ \Gamma \left(\frac{r}{k}\right) m
                        n_{v}}\right)} \right) 
        Q^{-1}\left(\frac{r}{k},\frac{n x}{\Gamma \left(\frac{r}{k}\right) m
                n_{v}}\right)^{1-\frac{r}{k}}
        .
\end{equation}
Using again the derivative formula \eqref{eq:Gam:D}, one can verify that
\begin{equation}\label{eq:diff:u}
    \frac{\partial}{\partial x}
    \hat{U}_{r,t}(x)
    =
    x \hat{u}_{r,t}(x)
    ,
\end{equation}
where
\begin{equation}\label{eq:xi:int:0}
    \begin{aligned}
        &
        \hat{U}_{r,t}(x)
        \eqd
        \frac{n_{v} }{n}\Gamma \left(1+\frac{r}{k},Q^{-1}\left(\frac{r}{k},\frac{n x}{\Gamma
                    \left(\frac{r}{k}\right) m n_{v}}\right)\right)
        -
        \frac{x }{m}Q^{-1}\left(\frac{r}{k},\frac{n x}{\Gamma \left(\frac{r}{k}\right) m n_{v}}\right)
        .
    \end{aligned}
\end{equation}

Note also that \(\hat{u}_{r,t}(x)=0\) if 
\( \frac{n x}{ \Gamma \left({1}/{k}\right) m n_{v}} \ge 1\).
Thus 
\begin{equation}\label{eq:xi:int}
    \begin{aligned}
    &
    \E{\xirv \mathbb 1[\xirv \le h]}
    =
    \int_{m^{-5}}^{h} x u_{t}(x)
    \, \dd x
    +
    \E{\xirv \mathbb 1[\xirv \le m^{-5}]}
    \\
    &
    \qquad
    =
    \left( 
        1+\bigO{\frac{\log m}{m}}^{\frac{1}{k}}
    \right)
    \left(  
        \hat{U}_{r,t}\left( 
            \frac{\Gamma \left(\frac{r}{k}\right) m n_{v}}{n} \wedge h
        \right)
        -
        \hat{U}_{r,t}\left( 
            m^{-5}
        \right)
    \right)
    +
    \smallo{m^{-2}}
    \\
    &
    \qquad
    =
    \left( 
        1+\bigO{\frac{\log m}{m}}^{\frac{1}{k}}
    \right)
        \hat{U}_{r,t}\left( 
            \frac{\Gamma \left(\frac{r}{k}\right) m n_{v}}{n} \wedge h
        \right)
    +
    \smallo{m^{-2}}
    ,
    \end{aligned}
\end{equation}
where we use
\( \hat{U}_{r,t}(m^{-5}) = \smallo{m^{-2}},\)
which follows from the inequalities \eqref{eq:ieq:gam}, \eqref{eq:ieq:gam:1} and \eqref{eq:ieq:gam:2}.

If \(t \ge l+1\), then 
\begin{equation}\label{eq:high}
    \frac{n h}{\Gamma \left(\frac{r}{k}\right) m n_{v}}
    \ge
    \frac{2^{\beta-\alpha} \Gamma \left(\frac{r}{k}\right)}{\Gamma \left(\frac{r}{k}\right)}
    \frac{2^{m+\alpha+o(1)}}{2^{l+\beta+o(1)}2^{m-l-1}}
    =
    2^{1+o(1)} > 1
    ,
\end{equation}
for \(n\)
large. 
Thus \eqref{eq:xi:int} reduces to
\begin{equation}\label{eq:xi:int:1}
    \begin{aligned}
        \E{\xirv \mathbb 1[\xirv \le h]}
        &
        =
        \left( 
            1+\bigO{\frac{\log m}{m}}^{\frac{1}{k}}
        \right)
            \hat{U}_{r,t}\left( 
                \frac{\Gamma \left(\frac{r}{k}\right) m n_{v}}{n}
            \right)
        +
        \smallo{m^{-2}}
        \\
        &
        =
        (1+o(1))
        \frac{n_{v} }{n}\Gamma \left(1+\frac{r}{k}\right)
        +
        \smallo{m^{-2}}
        .
    \end{aligned}
\end{equation}
If \(t \le l\), then
\begin{equation}\label{eq:low}
    \frac{n h}{\Gamma \left(\frac{r}{k}\right) m n_{v}}
    \le
    \frac{2^{\beta-\alpha} \Gamma \left(\frac{r}{k}\right)}{\Gamma \left(\frac{r}{k}\right)}
    \frac{2^{m+\alpha+o(1)}}{2^{l+\beta+o(1)}2^{m-l}}
    =
    1^{1+o(1)}
    ,
\end{equation}
and \eqref{eq:xi:int} reduces to
\begin{equation}\label{eq:xi:int:2}
        \E{\xirv \mathbb 1[\xirv \le h]}
        =
        \left( 
            1+\bigO{\frac{\log m}{m}}^{\frac{1}{k}}
        \right)
            \hat{U}_{r,t}\left( 
                h
            \right)
        +
        \smallo{m^{-2}}
        .
\end{equation}
So we distinguish three cases, \(l/2 \le t \le l\), \(l < t < L\), and
\(t = L\), which we refer to as the \emph{low} part, the \emph{high} part, and the \emph{last} part.

The number of good nodes \(v\) with \(n_{v}=2^{m-t}-1\), is given by \eqref{eq:good:bad}. Thus
for the low part, i.e., when \(v\) is a good node with \(n_{v}=2^{m-t}\) and \(l/2 \le t \le
    l\),  
\begin{equation}\label{eq:mu:low}
    \begin{aligned}
        \mu_{1}
        &
        \eqd
        \sum_{v \text{ good and low}}
        \E{\xirv \mathbb 1[\xirv \le h]}
        \\
        &
        =
        \sum_{l/2 \le t \le l}
        \left( 2^{t+\alpha_{n}}+\bigO{1} \right)
        \left( 
        \left( 
            1+\bigO{\frac{\log m}{m}}^{\frac{1}{k}}
        \right)
            \hat{U}_{r,t}\left( 
                h
            \right)
        +
        \smallo{m^{-2}}
        \right)
        \\
        &
        =
            \underset{\text{s}=1}{\overset{\infty }{\sum }}\exp\left({-Q^{-1}\left(\frac{r}{k},2^{-\text{s}}\right)}  \right)
            Q^{-1}\left(\frac{r}{k},2^{-\text{s}}\right)^{\frac{r}{k}}
        \\
        &
        \qquad
            -\underset{\text{s}=1}{\overset{\infty }{\sum }}2^{-\text{s}} \Gamma \left(\frac{r}{k}\right) Q^{-1}\left(\frac{r}{k},2^{-\text{s}}\right)+2 \Gamma \left(1+\frac{r}{k}\right)
            + \smallo{1}
        ,
    \end{aligned}
\end{equation}
where the result has been simplified using \eqref{eq:shift:gamma}.
(The convergence of this sum follows from \eqref{eq:ieq:gam:1} and \eqref{eq:ieq:gam:2}.)
For the high part, i.e., when \(v\) is a good node with \(n_{v}=2^{m-t}\) and \(l < t < L\),
\begin{equation}\label{eq:mu:high}
    \begin{aligned}
        \mu_{2}
        &
        \eqd
        \sum_{v \text{ good and high}}
        \E{\xirv \mathbb 1[\xirv \le h]}
        \\
        &
        =
        \sum_{l < t < L}
        \left( 2^{t+\alpha_{n}}+\bigO{1} \right)
        \left( 
        \left(1+o(1)\right) 
        \frac{n_{v} }{n}\Gamma \left(1+\frac{r}{k}\right)
        +
        \smallo{m^{-2}}
        \right)
        \\
        &
        =
        \Gamma \left(1+\frac{r}{k}\right) (L-l-1)
        +
        \smallo{1}
        .
    \end{aligned}
\end{equation}
And for the last part, i.e., when \(v\) is good node with \(n_{v}=2^{m-L}\),
\begin{equation}\label{eq:mu:last}
    \begin{aligned}
        \mu_{3}
        &
        \eqd
        \sum_{v \text{ good and last}}
        \E{\xirv \mathbb 1[\xirv \le h]}
        \\
        &
        =
        \left( 
        (2-2^{\alpha_{n}}) 2^{L} + \bigO{1}
        \right)
        \left( 
        \left(1+o(1)\right) 
        \frac{n_{v} }{n}\Gamma \left(1+\frac{r}{k}\right)
        +
        \smallo{m^{-2}}
        \right)
        \\
        &
        =
        2^{-\alpha } \left(2-2^{\alpha }\right) \Gamma \left(1+\frac{r}{k}\right)
        +
        \smallo{1}
        .
    \end{aligned}
\end{equation}
Together with \eqref{eq:nu:mean:j1},
\begin{equation}\label{eq:mu:all}
    \begin{aligned}
    &
    \sum_{v \text{ good}} \E{\xi_{r,v} \mathbb 1 [\xi_{r,v}\le h]} 
    \\
    &
    =
    \mu_{1}+\mu_{2}+\mu_{3}+\smallo{1}
    \\
    &
    \to
    f_{r,k,\gamma}
    +
    \Gamma\left( 1+\frac{r}{k} \right)
    \left( 
        2^{1-\alpha }+\alpha -\beta -\ell + L
    \right)
    -
    \int_{h}^{1} x \,\dd \nu_{r,k,\gamma}(x)
    ,
    \end{aligned}
\end{equation}
where
\begin{equation}\label{eq:f:gamma}
    \begin{aligned}
        f_{r,k,\gamma}
        \eqd
        &
        \underset{t=1}{\overset{\infty }{\sum }}\exp \left(-Q^{-1}\left(\frac{r}{k},2^{\fracPart{\gamma-\lg\Gamma(\frac{r}{k})}-t}\right)\right) Q^{-1}\left(\frac{r}{k},2^{\fracPart{\gamma-\lg\Gamma(\frac{r}{k})}-t}\right)^{\frac{r}{k}}
        \\
        &
        +
        \underset{t=1}{\overset{\infty }{\sum }}-2^{\fracPart{\gamma-\lg\Gamma(\frac{r}{k})}-t} \Gamma \left(\frac{r}{k}\right) Q^{-1}\left(\frac{r}{k},2^{\fracPart{\gamma-\lg\Gamma(\frac{r}{k})}-t}\right)
        \\
        &
        +\Gamma \left(1+\frac{r}{k}\right) 
        \left( 
            2^{\fracPart{\gamma-\lg\Gamma(\frac{r}{k})}}
            -\fracPart{\gamma-\lg\Gamma(\frac{r}{k})}
            -\lg \Gamma \left(\frac{r}{k}\right)-1
        \right)
        .
    \end{aligned}
\end{equation}
(The fact that \(f_{r,k,\gamma}<\infty\) follows from the inequalities \eqref{eq:ieq:gam},
\eqref{eq:ieq:gam:1}, \eqref{eq:ieq:gam:2}.)
When \(k=r\), the above is simply \(2^{\gamma }-\gamma -1\), as in Theorem
1.1 of \cite{janson04}.

\subsection{Proof of {\autoref{lem:convergence}} {(\ref{lem:con:var})}}
\label{sec:con:var}

By the upper bound of \(\dd \nurab/\dd x=\sum_{s\ge 1} g_{r}(s,x)\) in \eqref{eq:d:nu},
\(\int_{0}^{h}x^2 \dd\nurab(x) < \infty\). Thus we are allowed to write this integral as
\begin{equation}\label{eq:split:nu:var}
    \int_{0}^{h} x^{2} \dd \nu_{r,k,\gamma}\,(x)
    =
    \sum_{i=-1}^{-\infty}
    \int_{h 2^{i}}^{h 2^{i+1}} x^{2}
    \,
    \dd \nu_{r,k,\gamma}(x)
    =
    \sum_{s \ge 1}
    \sum_{i=-1}^{-\infty}
    \int_{h 2^{i}}^{h 2^{i+1}} x^{2}
    g_{r}(s,x)
    \,
    \dd x
    .
\end{equation}
Recall that for \(x \in (2^{i}h, 2^{i+1}h)\), \(g_{r}(s,x)=\hat{g}_{r}(s,x,i)\) (see
\eqref{eq:f:g:s:x}).
Using the derivative formula \eqref{eq:Gam:D}, one can verify that
\begin{equation}\label{eq:g:diff:2}
    \frac{\partial}{\partial x}
    \widetilde{G}_{r}(s,x,i) 
    =
    x^{2} \hat{g}_{r}(s,x,i)
    ,
\end{equation}
where
\begin{equation}\label{eq:G:2}
\begin{aligned}
    \widetilde{G}_{r}(s,x,i) 
    \eqd
    &
    2 x \exp \left( {-Q^{-1}\left(\frac{r}{k},\frac{2^{-i-t+\alpha -\beta } x}{\Gamma \left(\frac{r}{k}\right)}\right)} \right) Q^{-1}\left(\frac{r}{k},\frac{2^{-i-t+\alpha -\beta } x}{\Gamma \left(\frac{r}{k}\right)}\right)^{r/k}
    \\
    &
    -x^2 2^{\alpha -\beta -i-t} Q^{-1}\left(\frac{r}{k},\frac{2^{-i-t+\alpha -\beta } x}{\Gamma \left(\frac{r}{k}\right)}\right)
    \\
    &
    +\frac{2 r x }{k}\Gamma \left(\frac{r}{k},Q^{-1}\left(\frac{r}{k},\frac{2^{-i-t+\alpha -\beta } x}{\Gamma \left(\frac{r}{k}\right)}\right)\right)
    \\
    &
    -2^{-\alpha +\beta +i+\frac{k-2 r}{k}+t} \Gamma \left(\frac{2 r}{k},2 Q^{-1}\left(\frac{r}{k},\frac{2^{-i-t+\alpha -\beta } x}{\Gamma \left(\frac{r}{k}\right)}\right)\right)
    \\
    &
    -\frac{r 2^{-\alpha +\beta +i+t} }{k}\Gamma \left(\frac{r}{k},Q^{-1}\left(\frac{r}{k},\frac{2^{-i-t+\alpha -\beta } x}{\Gamma \left(\frac{r}{k}\right)}\right)\right)^2
    .
\end{aligned}
\end{equation}
Thus
\begin{equation}\label{eq:G:2:i}
    \int_{h 2^{i}}^{h 2^{i+1}} x^{2}
    g_{r}(s,x)
    \,
    \dd x
    =
    \int_{h 2^{i}}^{h 2^{i+1}} x^{2}
    \hat{g}_{r}(s,x,i)
    \,
    \dd x
    =
    \widetilde{G}_{r}(s, h 2^{i+1}, i)
    -
    \widetilde{G}_{r}(s, h 2^{i}, i)
    .
\end{equation}
Summing \eqref{eq:G:2:i} over \(i\) and \(s\) as in \eqref{eq:split:nu:var}
\begin{equation}\label{eq:split:nu:var:1}
    \begin{aligned}
        \int_{0}^{h} x^{2} \dd \nu_{r,k,\gamma}\,(x)
        =
        2^{\beta -\alpha }
        &
        \left(
            \frac{3 r \Gamma \left(\frac{r}{k}\right)^2}{k}-4^{1-\frac{r}{k}} \Gamma \left(\frac{2 r}{k}\right)
            -\underset{t=1}{\overset{\infty }{\sum }}2^{-t} \Gamma \left(\frac{r}{k}\right)^2 Q^{-1}\left(\frac{r}{k},2^{-t}\right)
        \right.
        \\
        &
            +2 \underset{t=1}{\overset{\infty }{\sum }} \exp\left( {-Q^{-1}\left(\frac{r}{k},2^{-t}\right)} \right) \Gamma \left(\frac{r}{k}\right) Q^{-1}\left(\frac{r}{k},2^{-t}\right)^{r/k}
        \\
        &
        \left.
            -2^{1-\frac{2 r}{k}} \underset{t=1}{\overset{\infty }{\sum }}2^t \Gamma \left(\frac{2 r}{k},2 Q^{-1}\left(\frac{r}{k},2^{-t}\right)\right)
        \right)
        .
    \end{aligned}
\end{equation}
(The convergence of this sum follows from \eqref{eq:ieq:gam:1} and \eqref{eq:ieq:gam:2}.)
Note that this is simply \(3 \cdot 2^{\beta-\alpha-1}\) for \(k=r\), as in Lemma 2.5 of \cite{janson04}.

We next compute \(\sum_{v \text{ good}} \V{\xi_{r,v} \mathbb 1 [\xi_{r,v}\le h]}\).
Using the estimation \eqref{eq:xi:int:1} and \eqref{eq:xi:int:2}, we see that
\begin{equation}\label{eq:sum:xi:2}
    \sum_{v \text{ good}}
    \E{\xirv \mathbb 1[\xirv \le h]}^{2}
    =
    o(1)
    .
\end{equation}
Thus it suffices to 
compute \(\sum_{v \text{ good}} \E{\xi_{r,v}^{2} \mathbb 1 [\xi_{r,v}\le h]}\).

Let \(v\) be a good node with \(n_{v}=2^{m-t}\) and \(l/2 \le t \le l\). Then using
\eqref{eq:xi:den}, 
\begin{equation}\label{eq:xi:var}
    \begin{aligned}
        \E{\xirv^{2} \mathbb 1[\xirv \le h]}
        &
        =
        \int_{m^{-5}}^{h} x^{2} u_{t}(x)
        \, \dd x
        +
        \smallo{m^{-2}}
        \\
        &
        =
        \left(1+\bigO{\frac{\log m}{m}}^{\frac{1}{k}}\right) 
        \int_{m^{-5}}^{h} x^{2} \hat{u}_{r,t}(x)
        \, \dd x
        +
        \smallo{m^{-2}}
        .
    \end{aligned}
\end{equation}
Using again the derivative formula \eqref{eq:Gam:D}, one can verify that
\begin{equation}\label{eq:diff:u:2}
    \frac{\partial}{\partial x} \widetilde{U}_{r,t}(x)
    =
    x^{2}
    \hat{u}_{r,t}(x)
    ,
\end{equation}
where
\begin{equation}\label{eq:int:U:2}
    \begin{aligned}
        \widetilde{U}_{r,t}(x)
        \eqd
        &
        \frac{2 x n_{v} }{n}\exp\left({-Q^{-1}\left(\frac{r}{k},\frac{n x}{m \Gamma \left(\frac{r}{k}\right) n_{v}}\right)} \right) Q^{-1}\left(\frac{r}{k},\frac{n x}{m \Gamma \left(\frac{r}{k}\right) n_{v}}\right)^{r/k}
        \\
        &
        -\frac{m r n_{v}^2}{k n^2} \Gamma \left(\frac{r}{k},Q^{-1}\left(\frac{r}{k},\frac{n x}{m \Gamma \left(\frac{r}{k}\right) n_{v}}\right)\right)^2
        -\frac{x^2}{m} Q^{-1}\left(\frac{r}{k},\frac{n x}{m \Gamma \left(\frac{r}{k}\right) n_{v}}\right)
        \\
        &
        -\frac{m 2^{\frac{k-2 r}{k}} n_{v}^2}{n^2} \Gamma \left(\frac{2 r}{k},2 Q^{-1}\left(\frac{r}{k},\frac{n x}{m \Gamma \left(\frac{r}{k}\right) n_{v}}\right)\right)
        \\
        &
        +\frac{2 r x n_{v}}{k n} \Gamma \left(\frac{r}{k},Q^{-1}\left(\frac{r}{k},\frac{n x}{m \Gamma \left(\frac{r}{k}\right) n_{v}}\right)\right)
        .
    \end{aligned}
\end{equation}
Recall that \(\hat{u}_{r,t}(x)=0\) if 
\( \frac{n x}{ \Gamma \left({1}/{k}\right) m n_{v}} \ge 1\).
Thus 
\begin{equation}\label{eq:u:int:var}
    \begin{aligned}
        &
        \E{\xirv^{2} \mathbb 1[\xirv \le h]}
        =
        \int_{m^{-5}}^{h} x^{2} u_{t}(x)
        \, \dd x
        +
        \E{\xirv \mathbb 1[\xirv \le m^{-5}]}
        \\
        &
        \qquad
        =
        \left( 
            1+\bigO{\frac{\log
                    m}{m}}^{\frac{1}{k}}
        \right)
        \left(  
            \widetilde{U}_{r,t}\left( 
                \frac{\Gamma
                    \left(\frac{r}{k}\right)
                    m
                    n_{v}}{n}
                \wedge
                h
            \right)
            -
            \widetilde{U}_{r,t}\left( 
                m^{-5}
            \right)
        \right)
        +
        \smallo{m^{-2}}
        \\
        &
        \qquad
        =
        \left( 
            1+\bigO{\frac{\log
                    m}{m}}^{\frac{1}{k}}
        \right)
        \widetilde{U}_{r,t}\left( 
            \frac{\Gamma
                \left(\frac{r}{k}\right)
                m
                n_{v}}{n}
            \wedge
            h
        \right)
        +
        \smallo{m^{-2}}
        ,
    \end{aligned}
\end{equation}
where we use
\( \widetilde{U}_{r,t}(m^{-5}) = \smallo{m^{-2}},\)
which follows from the inequalities \eqref{eq:ieq:gam}, \eqref{eq:ieq:gam:1} and \eqref{eq:ieq:gam:2}.

The number of good nodes \(v\) with \(n_{v}=2^{m-t}-1\), is given by \eqref{eq:good:bad}.  We again
separate good nodes into the low part (\(l/2 \le t \le l\)), the high part (\(l < t < L\)) and the
last part (\(t = L\)) as in \autoref{sec:con:mean}.  
For the low part, i.e., when \(v\) is a good node with \(n_{v}=2^{m-t}\) and \(l/2 \le t \le
    l\),  
\begin{equation}\label{eq:sigma:low}
    \begin{aligned}
        \sigma_{1}
        &
        \eqd
        \sum_{v \text{ good and low}}
        \E{\xirv^{2} \mathbb 1[\xirv \le h]}
        \\
        &
        =
        \sum_{l/2 \le t \le l}
        \left( 2^{t+\alpha_{n}}+\bigO{1} \right)
        \left(1+\bigO{\frac{\log m}{m}}^{\frac{1}{k}}\right) 
        \left( 
            \widetilde{U}_{r,t}\left( 
                h
            \right)
            +
            o\left( m^{-2} \right)
        \right)
        \\
        &
        =
        2^{\beta -\alpha } \left(
        \underset{\text{s}=1}{\overset{\infty }{\sum }}2 \exp\left({-Q^{-1}\left(\frac{r}{k},2^{-\text{s}}\right)} \right) \Gamma \left(\frac{r}{k}\right) Q^{-1}\left(\frac{r}{k},2^{-\text{s}}\right)^{r/k}
        \right.
        \\
        &
        \qquad
        \qquad
            \frac{2 r \Gamma \left(\frac{r}{k}\right)^2}{k}-2^{1-\frac{2 r}{k}} \Gamma \left(\frac{2 r}{k}\right)
            -\underset{\text{s}=1}{\overset{\infty }{\sum }}2^{-\text{s}} \Gamma \left(\frac{r}{k}\right)^2 Q^{-1}\left(\frac{r}{k},2^{-\text{s}}\right)
        \\
        &
        \qquad
        \qquad
        \left.
            -\underset{\text{s}=1}{\overset{\infty }{\sum }}2^{-\frac{2 r}{k}+\text{s}+1} \Gamma \left(\frac{2 r}{k},2 Q^{-1}\left(\frac{r}{k},2^{-\text{s}}\right)\right)
        \right)
        .
    \end{aligned}
\end{equation}
For the high part, i.e., when \(v\) is a good node with \(n_{v}=2^{m-t}\) and \(l < t < L\),
\begin{equation}\label{eq:sigma:high}
    \begin{aligned}
        \sigma_{2}
        &
        \eqd
        \sum_{v \text{ good and high}}
        \E{\xirv^{2} \mathbb 1[\xirv \le h]}
        \\
        &
        =
        \sum_{l < t < L}
        \left( 2^{t+\alpha_{n}}+\bigO{1} \right)
        \left(1+\bigO{\frac{\log m}{m}}^{\frac{1}{k}}\right) 
        \left( 
            \widetilde{U}_{r,t}\left( 
                \frac{\Gamma \left(\frac{r}{k}\right) m n_{v}}{n}
            \right)
            +
            o\left( m^{-2} \right)
        \right)
        \\
        &
        =
        2^{\beta -\alpha } \left(\frac{r \Gamma \left(\frac{r}{k}\right)^2}{k}-2^{1-\frac{2 r}{k}} \Gamma \left(\frac{2 r}{k}\right)\right)
        +o(1)
        .
    \end{aligned}
\end{equation}
And for the last part, i.e., when \(v\) is good node with \(n_{v}=2^{m-L}\),
\begin{equation}\label{eq:sigma:last}
    \begin{aligned}
        \sigma_{3}
        &
        \eqd
        \sum_{v \text{ good and last}}
        \E{\xirv^{2} \mathbb 1[\xirv \le h]}
        \\
        &
        =
        \left( 
        (2-2^{\alpha_{n}}) 2^{L} + \bigO{1}
        \right)
        \left(1+\bigO{\frac{\log m}{m}}^{\frac{1}{k}}\right) 
        \left( 
            \widetilde{U}_{r,t}\left( 
                \frac{\Gamma \left(\frac{r}{k}\right) m n_{v}}{n}
            \right)
            +
            o\left( m^{-2} \right)
        \right)
        \\
        &
        =o(1)
        .
    \end{aligned}
\end{equation}
Therefore,
\begin{equation}\label{eq:sigma:final}
    \sum_{v:h(v)\le L} 
    \V{\xi_{r,v} \mathbb 1 [\xi_{r,v}\le h]} 
    =
    \sigma_{1}+\sigma_{2}+\sigma_{3} + o(1)
    \to \int_{0}^{h} x^2 \, \dd \nu_{r,k,\gamma}(x),
\end{equation}
where the limit is given by \eqref{eq:split:nu:var:1}.
Thus we have completed the whole proof of \autoref{lem:convergence}.

\printbibliography{}

\end{document}